\numberwithin{equation}{section}
\newcommand{\blank}[1]{{}}
\def\bbar#1{\setbox0=\hbox{$#1$}\dimen0=.2\ht0 \kern\dimen0 }
\newcommand{\defi}[1]{\textsf{#1}} % for defined terms
\newenvironment{romanenum}{\hfill \begin{enumerate} }{\end{enumerate}}
\newenvironment{alphenum}{\hfill \begin{enumerate} }{\end{enumerate}}
  \newcommand{\FF}{{\mathbb F}}
 \newcommand{\QQ}{{\mathbb Q}}
\newcommand{\RR}{{\mathbb R}}
\newcommand{\ZZ}{{\mathbb Z}}
\newcommand{\kbar}{{\bbar{k}}}
\def\bbar#1{\setbox0=\hbox{$#1$}\dimen0=.2\ht0 \kern\dimen0 \overline{\kern-\dimen0 #1}}
\newcommand{\Qbar}{{\overline{\mathbb Q}}} 
\newcommand{\Kbar}{\bbar{K}}
  \renewcommand{\P}{{\mathfrak P}}
 \newcommand{\m}{{\mathfrak m}}
\newcommand{\calB}{{\mathcal B}}
\newcommand{\calC}{{\mathcal C}}
\newcommand{\calD}{{\mathcal D}}
\newcommand{\calF}{{\mathcal F}}
\newcommand{\calG}{{\mathcal G}}
\newcommand{\calH}{{\mathcal H}}
\newcommand{\calP}{{\mathcal P}}
\newcommand{\calU}{{\mathcal U}}
\newcommand{\OO}{{\mathcal O}}
\def\Bb{\mathcal B}
\DeclareMathOperator{\tr}{tr}
\DeclareMathOperator{\Tr}{Tr}
\DeclareMathOperator{\Frob}{Frob}
\DeclareMathOperator{\End}{End}
\DeclareMathOperator{\Aut}{Aut}
\DeclareMathOperator{\Gal}{Gal}
\DeclareMathOperator{\Ind}{Ind}
\DeclareMathOperator{\Cl}{Cl}
\DeclareMathOperator{\ord}{ord}
\newcommand{\GL}{\operatorname{GL}}
\newcommand{\SL}{\operatorname{SL}}
\newcommand{\Li}{\operatorname{Li}}
\newcommand{\Cent}{\operatorname{Cent}}
\def\CC{\mathbb C}
\def\p{\mathfrak{p}}
\newtheorem{theorem}{Theorem}[section]
\newtheorem{lemma}[theorem]{Lemma}
\newtheorem{corollary}[theorem]{Corollary}
\theoremstyle{definition}
\newtheorem{conjecture}[theorem]{Conjecture}
\theoremstyle{remark}
\newtheorem{remark}[theorem]{Remark}
\definecolor{webcolor}{rgb}{0,0,1}
\definecolor{webbrown}{rgb}{.6,0,0}
\begin{document}

\title[Bounds for the Lang-Trotter conjectures]{Bounds for the Lang-Trotter conjectures}
\subjclass[2010]{Primary 11G05; Secondary 11N05, 11R45}
%\keywords{}
\author{David Zywina}
\address{Department of Mathematics, Cornell University, Ithaca, NY 14853, USA}
\email{zywina@math.cornell.edu}
\urladdr{http://www.math.cornell.edu/~zywina}

\begin{abstract}
For a non-CM elliptic curve $E/\QQ$, Lang and Trotter made very deep conjectures concerning the number of primes $p\leq x$ for which $a_p(E)$ is a fixed integer (and for which the Frobenius field at $p$ is a fixed imaginary quadratic field).    Under GRH, we use a smoothed version of the Chebotarev density theorem to improve the best known Lang-Trotter upper bounds of Murty, Murty and Saradha, and Cojocaru and David.
\end{abstract}

\maketitle

%------------------------------------
\section{Introduction}

\subsection{The Lang-Trotter conjectures}

Fix a non-CM elliptic curve $E$ defined over $\QQ$ and let $N_E$ be its conductor.   Take any prime $p\nmid N_E$.   Let $E_p$ be the reduction of $E$ modulo $p$; it is an elliptic curve over $\FF_p$.   Let $\pi_p$ be the Frobenius endomorphism of $E_p$.    We have $\pi_p^2-a_p(E) \pi_p + p=0$ for a unique integer $a_p(E)$.   We can also define $a_p(E)$ by the formula $a_p(E) =  |E_p(\FF_p)| - (p+1)$.  From Hasse, we know that $|a_p(E)|< 2\sqrt{p}$ and hence $\QQ(\pi_p)$ in $\End(E_p)\otimes_\ZZ \QQ$ is an imaginary quadratic field. 

Fix an integer $a$ and an imaginary quadratic field $k$.  We define the following functions of $x\geq 2$:
\begin{align*}
P_{E,a}(x) &:= \#\{ p\leq x : p\nmid N_E,\, a_p(E)=a \},\\
P_{E,k}(x) &:= \#\{ p\leq x : p\nmid N_E,\, \QQ(\pi_p) \cong k \}.
\end{align*}
Lang and Trotter made the following two conjectures concerning the asymptotics of $P_{E,a}(x)$ and $P_{E,k}(x)$, cf.~\cite{MR0568299}.

\begin{conjecture}[Lang-Trotter] \label{C:LT conj}
\begin{alphenum}
\item
There is an explicit constant $C_{E,a} \geq 0$ such that 
\[
P_{E,a}(x) \sim C_{E,a}\cdot \frac{x^{1/2}}{\log x}
\]
as $x\to \infty$.   When $C_{E,a}=0$, we interpret this asymptotic as meaning that $P_{E,a}(x)$ is a bounded function of $x$.
\item
There is an explicit constant $C_{E,k} > 0$ such that 
\[
P_{E,k}(x) \sim C_{E,k} \cdot \frac{x^{1/2}}{\log x}
\]
as $x\to \infty$.   
\end{alphenum}
\end{conjecture}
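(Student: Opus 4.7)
The plan is to prove Conjecture~\ref{C:LT conj} by combining the Galois-theoretic information carried by the mod-$m$ representations of $E$ with the archimedean Sato--Tate distribution, and then converting the resulting heuristic model into a genuine asymptotic via an effective form of the Chebotarev density theorem.

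For the non-archimedean input, for each integer $m\geq 1$ I would use the mod-$m$ Galois representation $\rho_{E,m}\colon \GalQ \to \GL_2(\ZZ/m\ZZ)$. Since $E$ is non-CM, Serre's open image theorem pins down $\im \rho_{E,m}$ for all $m$ coprime to a fixed integer, and describes it explicitly in general. For any prime $p \nmid m N_E$, the conjugacy class of $\rho_{E,m}(\Frob_p)$ is defined and has trace $a_p(E) \bmod m$. Chebotarev then gives that the density of primes $p$ with $a_p(E) \equiv a \pmod m$ equals
\[
\delta_m \;:=\; \frac{\#\{\, g \in \im \rho_{E,m} : \tr g \equiv a \!\!\pmod m\,\}}{\#\im \rho_{E,m}},
\]
and a limiting argument as $m\to\infty$, together with the structure of the adelic image, assembles these $\delta_m$ into the non-archimedean local factor appearing in $C_{E,a}$.

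The archimedean factor is supplied by the Sato--Tate law: the normalized traces $a_p(E)/(2\sqrt{p})$ are equidistributed on $[-1,1]$ with respect to $(2/\pi)\sqrt{1-x^2}\,dx$. Heuristically, the ``local probability'' that $a_p(E)$ hits the specific integer $a$ for $p$ in a dyadic range near $y$ is of size $1/\sqrt{y}$ times the Sato--Tate density at $0$; partial summation then converts $\sum_{p\le x} 1/\sqrt{p}$ into $2\sqrt{x}/\log x$. Multiplying this archimedean factor against the non-archimedean product yields the predicted constant $C_{E,a}$ and the $\sqrt{x}/\log x$ asymptotic. For part~(b) I would observe that $\QQ(\pi_p)\cong k$ amounts to requiring $a_p(E)^2 - 4p$ to lie in the fixed square class of $\QQ^\times$ determined by $\disc(k)$; this is again simultaneously a condition on $\Frob_p$ modulo $m$ and a condition in an archimedean window, so the same product-formula apparatus produces $C_{E,k}$.

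The main obstacle, and the reason I expect this strategy to fall short of a full proof, is that the heuristic requires \emph{uniform} Chebotarev with $m$ growing like a power of $x$. In order to isolate the single value $a$ out of the roughly $4\sqrt{x}$ integers in the Hasse interval, one must work modulo $m\gtrsim x^{\delta}$ for some $\delta>0$; but the error term in any known form of Chebotarev for $\rho_{E,m}$, even under GRH, is of size $O(x^{1/2}\log(m^N x))$, which already swamps the conjectured main term of size $\sqrt{x}/\log x$. Turning the heuristic into a genuine asymptotic would require square-root cancellation in Chebotarev sums over \emph{noncentral} conjugacy classes as $m$ grows polynomially in $x$, a statement substantially stronger than GRH; this is why the present paper must settle for an upper bound, obtained by a smoothed Chebotarev applied at an optimally chosen scale of~$m$.
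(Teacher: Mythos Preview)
The statement you are asked to prove is a \emph{conjecture}, and the paper makes no attempt to prove it; it is stated as Conjecture~\ref{C:LT conj} and the paper's contribution is the upper bounds of Theorems~\ref{T:main a} and~\ref{T:main k}. So there is no ``paper's own proof'' to compare against.

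You have in fact recognised this yourself in your final paragraph: the heuristic you sketch (non-archimedean Chebotarev densities for the trace condition modulo $m$, combined with the archimedean Sato--Tate input, assembled into a product constant) is exactly the Lang--Trotter probabilistic model from \cite{MR0568299}, and you correctly pinpoint the obstruction --- to isolate a single trace value one would need $m$ to grow like a power of $x$, at which point the Chebotarev error term (even under GRH) is already larger than the conjectured main term $\sqrt{x}/\log x$. That diagnosis is accurate and is precisely why the conjecture remains open. Your write-up would be better framed not as a ``proof plan'' but as an account of the heuristic and its limitations; as a proof it has the genuine and unfixable gap you yourself identify.
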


\subsection{Upper bounds}

In this paper, we are interested in improving the best known upper bounds on $P_{E,a}(x)$ and $P_{E,k}(x)$ as functions of $x$; we will summarize previous results in \S\ref{SS:previous results}.   Some bounds are conditional on the Generalized Riemann Hypothesis (GRH) for number fields.

\begin{theorem} \label{T:main a}
Let $E$ be a non-CM elliptic curve defined over $\QQ$ and let $a$ be an integer.   Assuming GRH, we have
\[
P_{E,a}(x) \ll_E \frac{x^{4/5}}{(\log x)^{3/5}} \quad \text{and}\quad P_{E,0}(x) \ll_E \frac{x^{3/4}}{(\log x)^{1/2}}.
\]
\end{theorem}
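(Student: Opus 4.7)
The plan is to bound $P_{E,a}(x)$ by applying a smoothed effective form of the Chebotarev density theorem (under GRH) to the mod-$\ell$ Galois representation $\rho_{E,\ell}\colon\Gal(\overline{\QQ}/\QQ)\to\GL_2(\F_\ell)$ of $E$, for a prime $\ell$ to be chosen optimally in terms of $x$. By Serre's open image theorem, there is a constant $\ell_0=\ell_0(E)$ such that $\rho_{E,\ell}$ is surjective for every $\ell\geq\ell_0$; the corresponding field $K_\ell:=\QQ(E[\ell])$ then has Galois group $\GL_2(\F_\ell)$ over $\QQ$, is unramified outside $\ell N_E$, and satisfies $\log|d_{K_\ell}|\ll_E\ell^2\log\ell$ by the conductor-discriminant formula. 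The finitely many $\ell<\ell_0$ contribute at most $O_E(1)$ and are absorbed into the final estimate.

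For $p\nmid\ell N_E$ with $a_p(E)=a$, the Frobenius image $\rho_{E,\ell}(\Frob_p)$ lies in the conjugation-stable subset
$$C_{\ell,a}:=\{g\in\GL_2(\F_\ell):\tr(g)\equiv a\pmod{\ell}\},$$
of density $|C_{\ell,a}|/|\GL_2(\F_\ell)|\sim 1/\ell$. Rather than invoke the Lagarias--Odlyzko effective Chebotarev directly (which, optimized in $\ell$, yields only an $x^{7/8}$-type bound as in Murty--Murty--Saradha), I would prove a smoothed variant: for a fixed smooth bump function $\phi$ supported in a dyadic interval $[y,2y]\subset[2,x]$,
$$\sum_p\phi(p)\,\mathbf{1}_{C_{\ell,a}}(\Frob_p) \;=\; \frac{|C_{\ell,a}|}{|\GL_2(\F_\ell)|}\sum_p\phi(p) \;+\; O_E\!\bigl(\mathcal{E}(y,\ell)\bigr),$$
where $\mathcal{E}(y,\ell)$ is sharper than the usual Chebotarev error because the rapid decay of the Mellin transform of $\phi$ along vertical lines suppresses the contribution of high-height zeros of the Artin $L$-functions attached to the irreducible characters of $\GL_2(\F_\ell)$ appearing in the decomposition of $\mathbf{1}_{C_{\ell,a}}$. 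Summing $\phi$ dyadically up to $x$ and balancing the main term $\sim x/(\ell\log x)$ against the new error with $\ell$ of order $x^{1/5}(\log x)^{2/5}$ then yields $P_{E,a}(x)\ll_E x^{4/5}/(\log x)^{3/5}$.

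For the sharper bound on $P_{E,0}$, the extra input is that $a_p(E)=0$ with $p>3$ forces $E_p$ to be supersingular, so the characteristic polynomial of $\rho_{E,\ell}(\Frob_p)$ is $T^2+p$ in $\F_\ell[T]$. Splitting the count by the residue class $c:=p\bmod\ell$ in $\F_\ell^\times$, Frobenius is then confined to the much smaller conjugation-stable set $\{g\in\GL_2(\F_\ell):\tr g\equiv 0,\ \det g\equiv c\pmod{\ell}\}$, of size $\sim\ell^2$ and density $\sim 1/\ell^2$ in $\GL_2(\F_\ell)$. Applying the smoothed Chebotarev to each such progression and reoptimizing $\ell$ (of order $x^{1/4}/(\log x)^{1/2}$) produces $P_{E,0}(x)\ll_E x^{3/4}/(\log x)^{1/2}$.

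The main technical obstacle will be establishing the smoothed Chebotarev bound with sharp, uniform dependence on $\ell$, $x$, and $N_E$. This requires applying the explicit formula to each irreducible Artin $L$-function associated to a character of $\GL_2(\F_\ell)$, using representation-theoretic estimates on the character expansion of $\mathbf{1}_{C_{\ell,a}}$, and tracking the zero contributions under GRH carefully enough that the precise logarithmic powers appear correctly in the final bounds.
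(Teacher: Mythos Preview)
Your proposal has a genuine gap that affects both bounds.

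For general $a$: the smoothed Chebotarev estimate with error of shape $|C|^{1/2}[K:\QQ]\,x^{1/2}\log M(L/K)$ requires not only GRH but also Artin's holomorphy conjecture for the extension in question, because the explicit formula for an individual Artin $L$-function $L(s,\chi)$ needs $L(s,\chi)$ to be entire (for $\chi\neq 1$).  AHC is \emph{not} known for $\QQ(E[\ell])/\QQ$ with group $\GL_2(\F_\ell)$: the cuspidal representations of $\GL_2(\F_\ell)$ are not monomial, so their Artin $L$-functions are not a priori holomorphic.  Working only with the Dedekind zeta function of $\QQ(E[\ell])$ (which needs only GRH) yields an error proportional to $\tfrac{|C|}{|G|}\,x^{1/2}\log(d_{K_\ell}x^{[K_\ell:\QQ]})$, and optimizing $\ell$ then recovers only Serre's $x^{7/8}$-type bound.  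The paper avoids AHC entirely by a two-step reduction: first, an averaging argument over $\ell\in[y,2y]$ (Lemma~4.4 of Murty--Murty--Saradha, under GRH) shows $P_{E,a}(x)\ll_E\max_\ell P_{E,a}(x,\ell)$, where $P_{E,a}(x,\ell)$ counts only those $p$ for which $\ell$ splits in $\QQ(\pi_p)$; this extra condition forces $\rho_{E,\ell}(\Frob_p)$ to have $\F_\ell$-rational eigenvalues, hence to be conjugate into the Borel $B$.  One then passes to the subextension $L/L^B$, quotients by the unipotent radical $U$, and applies the smoothed Chebotarev to the \emph{abelian} extension $L^U/L^B$, where AHC is automatic.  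Your numerology is recovered there, with $[L^B:\QQ]\asymp\ell$, $|C'|\ll\ell$, and $|B/U|\asymp\ell^2$.

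For $a=0$: even granting AHC, splitting by $c=p\bmod\ell$ does not help.  Applying the $|C|^{1/2}$-error bound to each slice $\{\tr g=0,\ \det g=c\}$ of size $\asymp\ell^2$ gives error $\asymp\ell\,\tfrac{x^{1/2}}{\log x}\log\ell$ per slice, and summing over the $\ell-1$ values of $c$ yields aggregate error $\asymp\ell^2\,\tfrac{x^{1/2}}{\log x}\log\ell$, which is \emph{worse} than applying Chebotarev once to $C_{\ell,0}$.  The paper's mechanism is different: after the Borel reduction above, one observes that $\tr=0$ is invariant under scalar multiplication, so one may quotient $B$ by the larger normal subgroup $H$ of matrices with equal eigenvalues.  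In $B/H\cong\F_\ell^\times$ the image of the trace-zero set is a single element, so $|C''|=1$, and balancing $\tfrac{1}{\ell}\tfrac{x}{\log x}$ against $1\cdot\ell\cdot\tfrac{x^{1/2}}{\log x}\log\ell$ gives $\ell\asymp x^{1/4}/(\log x)^{1/2}$ and the stated bound.
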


The best known unconditional bounds for $P_{E,a}(x)$ can be found in \S\ref{SS:previous results}.  We now give bounds for $P_{E,k}(x)$.

\begin{theorem} \label{T:main k}
Let $E$ be a non-CM elliptic curve  over $\QQ$ and let $k$ be an imaginary quadratic field.  
\begin{romanenum}
\item \label{T:main k i}
Assume GRH.  Then
\[
P_{E,k}(x) \ll_E  \,\, \frac{1}{h_k^{3/5}}  \frac{x^{4/5}}{ (\log x)^{3/5}} + x^{1/2}(\log x)^3,
\]
where $h_k$ is the class number of $k$.  In particular, $P_{E,k}(x) \ll_E  {x^{4/5}}/{(\log x)^{3/5}}$.
\item \label{T:main k ii}
There is a constant $c>0$, depending only on $E$ and $k$, such that 
\[
P_{E,k}(x) \ll_E \frac{x(\log\log x)^2}{(\log x)^2}
\]
whenever $x\geq c$.  In particular, $P_{E,k}(x) \ll_{E,k} {x(\log\log x)^2}/{(\log x)^2}$.
\end{romanenum}
\end{theorem}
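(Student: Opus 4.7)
The plan is to adapt the smoothed effective Chebotarev machinery used to prove Theorem~\ref{T:main a}, with the additional input of class field theory for $k$: the condition $\QQ(\pi_p)\cong k$ forces $p$ to split completely in the ring class field of $k$ of conductor $f_p$ (where $a_p^2 - 4p = f_p^2 d_k$). The main contribution comes from primes with $f_p = 1$, which must split completely in the Hilbert class field $H_k$ of degree $2h_k$ over $\QQ$. I would bound the contribution of primes with $f_p \geq 2$ by a parallel estimate involving ring class fields of higher conductor, and these will be absorbed into the $x^{1/2}(\log x)^3$ remainder in (i).

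For part (i), fix an auxiliary prime $\ell$ and consider the compositum $L_\ell := \QQ(E[\ell])\cdot H_k$. By Serre's open image theorem for non-CM $E$, for $\ell$ sufficiently large $\Gal(L_\ell/\QQ)$ is a fiber product of $\GL_2(\FF_\ell)$ and $\Gal(H_k/\QQ)$ over $\Gal(k/\QQ)$, of order $\asymp h_k \ell^4$. The primes with $\QQ(\pi_p)\cong k$ and $f_p = 1$ correspond to Frobenius conjugacy classes $C_\ell \subseteq \Gal(L_\ell/\QQ)$ of density $\asymp 1/(h_k \ell)$: the factor $1/(2h_k)$ arises from splitting in $H_k$, and the factor $\asymp 1/\ell$ from a trace-type condition on the image of $\Frob_p$ in $\GL_2(\FF_\ell)$ corresponding to $\pi_p$ acting as a specific element of $(\OO_k/\ell)^\times$. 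Applying the smoothed Chebotarev density theorem under GRH (as in the proof of Theorem~\ref{T:main a}), whose error is of the form $|C_\ell|^{1/2} x^{1/2}$ rather than the cruder $\sqrt{x} \log(d_{L_\ell} x^{[L_\ell:\QQ]})$, yields
\[
P_{E,k}(x) \;\ll_E\; \frac{x}{h_k \ell \log x} + \ell^{3/2} x^{1/2} + x^{1/2} (\log x)^3.
\]
The choice $\ell \asymp x^{1/5} (h_k \log x)^{-2/5}$ balances the first two terms and yields exactly the claimed bound $x^{4/5}/(h_k^{3/5} (\log x)^{3/5})$.

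For part (ii), I would apply the unconditional effective Chebotarev of Lagarias--Odlyzko in place of its GRH-based smoothed counterpart; its weaker error forces a different choice of $\ell$ (essentially of logarithmic order), and the standard Deuring--Heilbronn treatment of possible Siegel zeros contributes a factor $(\log\log x)^2$, yielding the bound $x(\log\log x)^2/(\log x)^2$. The main obstacle lies in the density computation: establishing uniformly in $\ell$ and $h_k$ that the Frobenii with $\QQ(\pi_p)\cong k$ and $f_p = 1$ lie in conjugacy classes of the predicted density $\asymp 1/(h_k \ell)$ requires a careful analysis of the image of $\pi_p \in \OO_k$ in $(\OO_k/\ell)^\times$ together with the splitting behavior of $p$ in $H_k$. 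Additional care is needed for those small primes $\ell$ where $\QQ(E[\ell])$ and $H_k$ fail to be linearly disjoint, and for the higher conductor cases $f_p \geq 2$; both must produce error terms that fit into the stated $x^{1/2}(\log x)^3$ remainder.
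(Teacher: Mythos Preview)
Your overall strategy is on the right track, but the Galois representation you propose is too coarse to yield the claimed $1/\ell$ density factor. The compositum $L_\ell = \QQ(E[\ell])\cdot H_k$ only records $a_p(E)\bmod\ell$, $p\bmod\ell$, and whether $p$ splits completely in $H_k$; it carries no information about $\pi_p\bmod\ell$ as an element of $(\OO_k/\ell)^\times$, because $(\OO_k/\ell)^\times$ is simply not a quotient of $\Gal(L_\ell/\QQ)$. The paper instead works over the Hilbert class field $\calH$ and pairs $\rho_{E,\ell}|_{\Gal_\calH}$ with the ray class character $\psi_{k,\ell}\colon \Gal_{\calH}\to (\OO/\ell\OO)^\times/\OO^\times$, obtaining a representation $\Psi_\ell$ into the group $\calG\subseteq \GL_2(\FF_\ell)\times\bigl((\OO/\ell\OO)^\times/\OO^\times\bigr)$. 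The key constraint is $\tr(A)\in\Tr_{k/\QQ}(u)$ for $(A,u)\in\calG$, and \emph{this} is what cuts the density by a factor $\asymp 1/\ell$; the condition cannot be formulated inside your group. Relatedly, the paper requires $\ell$ to \emph{split} in $k$, both so that $(\OO/\ell\OO)^\times\cong\FF_\ell^\times\times\FF_\ell^\times$ (needed for the cardinality estimate $|\calC'|\ll\ell$) and so that the Borel reduction produces the right shape; you do not impose this.

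Your account of the secondary term $x^{1/2}(\log x)^3$ is also off. The decomposition by conductor $f_p$ is unnecessary: \emph{every} prime $p\nmid N_E$ with $\QQ(\pi_p)\cong k$ and ordinary reduction already splits completely in $\calH$, since $\pi_p\OO_k$ is a principal prime of norm $p$, regardless of $f_p$ (Lemma~\ref{L:LT2 key}). In the paper the term $x^{1/2}(\log x)^3$ arises instead from the constraint $\ell\gg(\log d_k)^2$ needed under GRH to guarantee a prime $\ell\in[y,2y]$ that splits in $k$: when $h_k>x^{1/2}/(\log x)^6$ the balancing choice $y\asymp h_k^{-2/5}x^{1/5}(\log x)^{-2/5}$ would drop below $(\log x)^2$, so one falls back to $y\asymp(\log x)^2$, and this yields $P_{E,k}(x)\ll x^{1/2}(\log x)^3$. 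For part~(\ref{T:main k ii}) the paper applies V.~K.~Murty's unconditional abelian Chebotarev (Theorem~\ref{T:effective}(\ref{T:effective ii})) after the same reduction to the abelian quotient $\calB/\calU$, with $\ell\asymp h_k^{-1}\log x/(\log\log x)^2$; the factor $(\log\log x)^2$ then comes directly from this choice of $\ell$ in the main term $x/(h_k\ell\log x)$, not from a Deuring--Heilbronn argument.
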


Let $D_E(x)$ be the number of imaginary quadratic extensions $k$ of $\QQ$, in some fixed algebraic closure of $\QQ$, for which there exists a prime $p\leq x$ with $\QQ(\pi_p) \cong k$.   The following, which will be proved in \S\ref{S:D}, is an easy consequence of Theorem~\ref{T:main k}(\ref{T:main k i}).

\begin{corollary} \label{C:D theorem}
Let $E$ be a non-CM elliptic curve defined over $\QQ$.  Assuming GRH, we have
\[ 
D_E(x) \gg_E   \frac{x^{2/7}}{(\log x)^{10/7}} .
\]
\end{corollary}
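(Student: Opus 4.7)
The plan is to invert Theorem~\ref{T:main k}(\ref{T:main k i}): since each Frobenius field $k$ can account for only a bounded number of primes, the larger $\pi(x)$ is, the more fields $D_E(x)$ must contain. The starting observation is that every prime $p \nmid N_E$ produces an imaginary quadratic Frobenius field $\QQ(\pi_p)$ (by the Hasse bound), so
\[
\sum_{k} P_{E,k}(x) \;=\; \#\{p \le x : p\nmid N_E\} \;=\; \pi(x) + O_E(1),
\]
the sum being taken over all imaginary quadratic subfields of a fixed algebraic closure of $\QQ$. Writing $D := D_E(x)$ and letting $k_1,\ldots,k_D$ be the distinct Frobenius fields that actually arise for some $p \le x$, this collapses to $\sum_{j=1}^D P_{E,k_j}(x) = \pi(x) + O_E(1)$.

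Next I would reorder the $k_j$ so that $h_{k_1} \le h_{k_2} \le \cdots \le h_{k_D}$ and produce a pointwise lower bound on $h_{k_j}$ in terms of $j$. Under GRH, Littlewood's theorem $L(1,\chi_{d_k}) \gg 1/\log\log|d_k|$ combined with the analytic class number formula gives $h_k \gg \sqrt{|d_k|}/\log\log|d_k|$ for every imaginary quadratic field $k$. Together with the elementary fact that there are $O(Y)$ imaginary quadratic fields of absolute discriminant $\le Y$, this forces $\#\{k : h_k \le H\} \ll H^2 (\log H)^{O(1)}$, and consequently $h_{k_j} \gg \sqrt{j}/(\log j)^{O(1)}$.

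Substituting Theorem~\ref{T:main k}(\ref{T:main k i}) into the summed identity and using the tame bound $\sum_{j \le D} j^{-3/10} \ll D^{7/10}$,
\[
\pi(x) \ll_E \sum_{j=1}^D \frac{1}{h_{k_j}^{3/5}} \cdot \frac{x^{4/5}}{(\log x)^{3/5}} + D\cdot x^{1/2}(\log x)^3 \ll D^{7/10}(\log D)^{O(1)} \cdot \frac{x^{4/5}}{(\log x)^{3/5}} + D\cdot x^{1/2}(\log x)^3.
\]
Since $\pi(x) \gg x/\log x$, one of the two right-hand terms must be $\gg x/\log x$. The second alternative yields $D \gg x^{1/2}/(\log x)^4$, far beyond the claim; the first, after rearrangement and the trivial estimate $\log D \le \log x$, yields $D^{7/10} \gg x^{1/5}/\log x$, that is, $D \gg x^{2/7}/(\log x)^{10/7}$. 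The genuine content is the GRH-based class number bound together with Theorem~\ref{T:main k}(\ref{T:main k i}); the remaining obstacle is only the routine bookkeeping of logarithmic factors, which is why the exponent $10/7$ in the corollary is slightly weaker than what the method actually delivers.
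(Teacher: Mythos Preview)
Your argument is correct and follows the same route as the paper: partition the primes $p\le x$ by their Frobenius field, apply Theorem~\ref{T:main k}(\ref{T:main k i}) to each summand, bound $\sum_k h_k^{-3/5}$ by $D^{7/10}$ times logarithmic factors, and split into two cases. The only cosmetic difference is in the pigeonhole step: the paper observes directly that the discriminants $d_k$ are distinct positive integers with $d_k\le 4x$, so $\sum_k d_k^{-3/10}\le\sum_{d=1}^{D}d^{-3/10}$, whereas you sort by class number and invoke the count of fields with $h_k\le H$; both yield the same $D^{7/10}$ with harmless logarithmic losses.
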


This improves on the bound $D_E(x) \gg_E x^{1/14}/(\log x)^2$ from \cite{MR2464027}.   The explicit dependence of $k$ in Theorem~\ref{T:main k}(\ref{T:main k i}) is very important here.

\subsection{Some earlier results}  \label{SS:previous results}
We first describe bounds for $P_{E,a}(x)$.  Under GRH, Serre proved that $P_{E,a}(x) \ll_E x^{7/8}(\log x)^{1/2}$ and $P_{E,0}(x) \ll_E x^{3/4}$, cf.~\cite{MR644559}.    Under GRH, Murty, Murty and Saradha obtained the improved bound 
\[
P_{E,a}(x) \ll_E \frac{x^{4/5}}{(\log x)^{1/5}},
\] 
cf.~\cite{MR935007}.    In \cite{MR644559}, Serre proved (unconditionally) that $P_{E,a}(x)\ll_{E,\varepsilon}\,  x/(\log x)^{5/4-\varepsilon}$ for any $\varepsilon>0$.   The exponent $5/4$ was improved to $2$ by D.~Wan \cite{MR1062334}.  The best general unconditional bound for $P_{E,a}(x)$ is the bound 
\begin{align} \label{E:unconditional VKMurty}
P_{E,a}(x)\ll_E x \frac{(\log\log x)^2}{(\log x)^2}
\end{align}
of V.~K.~Murty \cite{MR1694997}.   For $a=0$, there is also the superior bound  $P_{E,0}(x) \ll_E x^{3/4}$ of Elkies, Kaneko and Murty, cf.~\cite{MR1144318}.
\\

We now describe bounds for $P_{E,k}(x)$.   In \cite{MR644559}*{p.~191}, Serre claimed without proof that $P_{E,k}(x) \ll_{E,k}\, x^\theta$ (under GRH) and $P_{E,k}(x) \ll_{E,k}\, x/(\log x)^{\gamma+1}$ (unconditionally) for some positive constants $\theta$ and $\gamma$.    Under GRH, Cojocaru, Fouvry and Murty \cite{MR2178556} showed that one could take $\theta=7/8$ and take any $\gamma>1/24$.  Under GRH, Cojocaru and David \cite{MR2464027} obtained the bound
\[
P_{E,k}(x) \ll_{E,k} \frac{x^{4/5}}{(\log x)^{1/5}}.\\
\]

Upper bounds for $P_{E,a}(x)$ and $P_{E,k}(x)$ are in general hard to improve.  The function of $x$ obtained indicates the strength of the methods used and often different methods will give the exact same bound.    For example, assume $E$ is semistable and  that the $L$-function for $E$ and its symmetric powers has analytic continuation and satisfies the appropriate analogue of the Riemann hypothesis, then Rouse and Thorner proved that $P_{E,0}(x) \ll_E  x^{3/4}/(\log x)^{1/2}$, cf.~\cite{Rouse-Thorner}.   This is the same bound as Theorem~\ref{T:main a} under GRH!

The goal of this paper is to push the upper bounds obtained using Chebotarev to the limit.   It is not clear to the author how to improve them without completely new ideas.

\subsection{Overview}

In \S\ref{S:Chebotarev}, we recall several effective versions of the Chebotarev density theorem.   Under GRH and Artin's holomorphy conjecture, we also give some improved Chebotarev upper bounds; the key point being that we can obtain superior error terms if we count primes using a smoothed weighting.

One can also prove a smoothed and \emph{unconditional} version of Chebotarev.   We have not done so because they did seem to lead to stronger Lang-Trotter bounds (in particular, we could not improve on the bound (\ref{E:unconditional VKMurty}) of V.K.~Murty).

In \S\ref{S:Galois representations}, we review some of the Galois representations associated to $E$ and $k$.   These representations play a role in the heuristics of Lang and Trotter in \cite{MR0568299}.    To understand their images we will need Serre's open image theorem and some class field theory.

We prove Theorem~\ref{T:main a} in \S\ref{S:main a}.   We follow the proof of Murty, Murty and Saradha in \cite{MR935007} and use our stronger Chebotarev bound.   We prove Theorem~\ref{T:main k} in \S\ref{S:main k}.   We again follow the general strategy of \cite{MR935007} though the groups are more complicated.

\begin{remark}
Let us remark how Theorem~\ref{T:main a} can be naturally generalized.   Suppose that $f(z) = \sum_{n\geq 1} a_n(f) e^{2\pi i n z}$ is a non-CM newform of integral weight $k \geq 2$ on $\Gamma_0(N)$ whose Fourier coefficients are rational numbers.  For a fixed integer $a$, we define $P_{f,a}(x)$ to be the number of primes $p\leq x$ for which $a_p(f)=a$.   

Using the methods in this paper, one can prove the Lang-Trotter bounds $P_{f,a}(x) \ll_{f} x^{4/5} (\log x)^{-3/5}$ and $P_{f,0}(x) \ll_f  x^{3/4} (\log x)^{-1/2}$ assuming GRH.   This is a generalization of Theorem~\ref{T:main a} since by modularity every elliptic curve $E/\QQ$ gives rise to a newform $f$ of weight $2$ on $\Gamma_0(N_E)$ satisfying $a_p(f) = a_p(E)$ for all primes $p\nmid N_E$.   Note that many of the bounds mentioned in \S\ref{SS:previous results} are proved for such general $f$.    (For $k\geq 3$, one should not expect the precise analog of Conjecture~\ref{C:LT conj} to hold.)

Our proof depends only on Galois representations and effective Chebotarev density theorems; we now make a few comments that describe what one needs to know about the Galois representations arising from $f$.  For each rational prime $\ell$, we know from Deligne that there is a Galois representation $\rho_{f,\ell}\colon \Gal_\QQ:=\Gal(\Qbar/\QQ) \to \GL_2(\FF_\ell)$ such that $\tr(\rho_{f,\ell}(\Frob_p)) \equiv a_p(f) \pmod{\ell}$ for all primes $p\nmid N\ell$.   For all but finitely many $\ell$, we will  have $\rho_{f,\ell}(\Gal_\QQ)\supseteq \SL_2(\FF_\ell)$, cf.~\cite{MR819838}.   
\end{remark}

\subsection*{Notation}

For two functions $f(x)$ and $g(x)$ of a real variable $x\geq 2$, we say that $f \ll g$ (or $g \gg f$) if there is a positive constant $C$ such that $|f(x)| \leq C|g(x)|$ for all $x\geq 2$.  We shall use $O(f)$ to denote an unspecified function $g$ with $g\ll f$.  We will always indicate the dependence of the implied
constant $C$ with subscripts on $\ll$ or $O$ (in particular, no subscripts indicates that the constant is absolute).  The logarithm integral is $\Li(x) := \int^x_2 (\log t)^{-1} \, dt$; it satisfies $\Li(x) \ll x/\log x$.

For a number field $K$, let $\OO_K$ be its ring of integers.   Let $\Sigma_K$ be the set of non-zero prime ideals of $\OO_K$.   For each $\p \in \Sigma_K$, let $N(\p)$ be the cardinality of the residue field $\OO_K/\p$.

For a number field $K$, let $\Kbar$ be a fixed algebraic closure of $K$.  Define the absolute Galois group $\Gal_K:=\Gal(\Kbar/K)$.   For each $\p \in \Sigma_K$, let $\Frob_\p \in \Gal_K$ be a Frobenius automorphism for the prime $\p$.  If $\rho\colon \Gal_K \to G$ is a representation unramified at $\p$, then $\rho(\Frob_\p)$ is a well-defined conjugacy class.

\subsection*{Acknowledgements}
This paper uses parts of an unpublished preprint that benefited from helpful comments from Bjorn Poonen.  Thanks also to Theodore Hui and the referee for making several corrections.

%---------------
\section{Chebotarev bounds} \label{S:Chebotarev}

Fix a Galois extension of number fields $L/K$ with Galois group $G$.    Define
\[
M(L/K):= 2 [L:K] \cdot d_K^{1/[K:\QQ]} \cdot {\prod}_{p\in \calP(L/K)}\,p,
\]
where $d_K$ is the absolute discriminant of $K$ and $\calP(L/K)$ is the set of rational primes $p$ that are divisible by some $\p \in \Sigma_K$ that ramifies in $L$.

We say that $L/K$ satisfies \defi{Artin's Holomorphy Conjecture} (AHC) if for each irreducible character $\chi \colon G \to \CC$, the Artin $L$-function $L(s,\chi)$ extends to a function analytic on the whole complex plane except at $s=1$ when $\chi=1$.   If $G=\Gal(L/K)$ is abelian, then AHC is known to hold for $L/K$; the Artin $L$-function then agrees with a Hecke $L$-function that has the required properties.  The \defi{Generalized Riemann Hypothesis} (GRH) for the field $L$ asserts that any zero $\rho$ of the Dedekind $L$-function of the field $L$ with $0\leq \operatorname{Re}(s) \leq 1$ satisfies $\operatorname{Re}(\rho)=1/2$.   We say that GRH holds if it holds for all number fields.

\subsection{Chebotarev density theorem} \label{SS:basic Cheb}

Let $\varphi\colon G \to \CC$ be a class function.  For each prime $\p\in \Sigma_K$, choose any $\P\in \Sigma_L$ dividing $\p$.   We then have a distinguished (arithmetic) Frobenius element $\sigma_{\P} \in D_\P/I_\P$, where $D_\P$ and $I_\P$ are the decomposition and inertia subgroups of $G$, respectively, at $\P$.    For each integer $m\geq 1$, we define
\[
\varphi(\Frob_\p^m) := \frac{1}{|I_\P|} \sum_{\substack{g\in D_\P,\\ gI_\P = \sigma_\P^m \in D_\P/I_\P}} \varphi(g).
\]
As the notation suggests, $\varphi(\Frob_\p^m)$ is independent of the choice of $\P$.  For $\p$ unramified in $L$, this definition agrees with the value of $\varphi$ on the conjugacy class $\Frob_\p^m$ of $G$.  For $x\geq 2$, define
\[
\pi_\varphi(x) := \sum_{\substack{\p\in\Sigma_K \text{ unramified in $L$}\\N(\p)\leq x}} \varphi(\Frob_\p) \text{ \quad and \quad }
\widetilde{\pi}_\varphi(x) := \sum_{\substack{\p\in\Sigma_K, m\geq 1\\ N(\p^m)\leq x}} \frac{1}{m} \varphi(\Frob_\p^m).
\]

Now let $C$ be a subset of $G$ that is stable under conjugation and let $\delta_C \colon G \to \{0,1\}$ be the class function such that $\delta_C(g)=1$ if and only if $g\in C$.    We define 
\[
\pi_C(x,L/K) := \pi_{\delta_C}(x)\quad \text{ and }\quad \widetilde\pi_C(x,L/K) := \widetilde{\pi}_{\delta_C}(x).
\]
It is often more convenient to use $\widetilde\pi_C(x,L/K)$ since it has better functorial properties, cf.~\S\ref{SS:functorial}.

The \defi{Chebotarev density theorem} says that
\begin{equation} \label{E:Chebotarev first}
\pi_C(x,L/K) \sim \frac{|C|}{|G|} \Li(x) 
\end{equation}
as $x\to +\infty$.   An effective form of Chebotarev is a version with an explicit error term.   The following theorems of Murty, Murty and Saradha give effective versions.

\begin{theorem}  \label{T:effective}
\begin{romanenum}
\item \label{T:effective i}
Suppose that AHC holds for $L/K$ and that GRH holds for $L$.  Then
\[
\pi_C(x,L/K) = \frac{|C|}{|G|}\Li(x) + O\Big( |C|^{1/2}\, [K:\QQ] \, x^{1/2}  \log  (M(L/K)x) \Big).
\]
\item \label{T:effective ii}
Assume that the group $G$ is abelian.   There are absolute constants $b,c>0$ such that if $\log x \geq b [K:\QQ] \log^2 M(L/K)$, then
\begin{align*}
&\Big| \pi_C(x,L/K) - \frac{|C|}{|G|} \Li(x) \Big| \\
\leq & \frac{|C|}{|G|} \Li(x^{\beta_L}) + O\bigg( |C|^{1/2} [K:\QQ] x \exp\bigg(-\frac{c(\log x)^{1/2}}{[K:\QQ]^{1/2}} \bigg) \cdot\log^2(M(L/K)x) \bigg),
\end{align*}
where $\beta_L$ is the possible exceptional zero of the Dedekind $L$-function of the field $L$ (it would be real and satisfy $1/2< \beta_L <1$).   The term $\frac{|C|}{|G|} \Li(x^{\beta_L})$ is present only when $\beta_L$ exists. 	
\end{romanenum}
\end{theorem}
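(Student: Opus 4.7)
The plan is to decompose the characteristic function $\delta_C$ as a sum of irreducible characters of $G$, apply an effective prime ideal theorem to the Artin $L$-function of each character separately, and then recombine via character orthogonality to obtain a $|C|^{1/2}$ savings.  I would first reduce to the smoothed count $\widetilde\pi_C(x,L/K)$ by observing that $\pi_C - \widetilde\pi_C$ comes only from ramified primes and higher prime powers $\p^m$ with $m\geq 2$, and is thus bounded by $O([L:\QQ]\,x^{1/2})$, which is absorbed by the error terms being proved.  Writing
\[
\delta_C = \sum_{\chi\in\widehat{G}} c_\chi\,\chi, \qquad c_\chi = \frac{1}{|G|}\sum_{g\in C}\overline{\chi(g)},
\]
linearity gives $\widetilde\pi_C(x,L/K) = \sum_\chi c_\chi\,\widetilde\pi_\chi(x)$, where $\widetilde\pi_\chi := \widetilde\pi_\varphi$ with $\varphi = \chi$; the task then reduces to a character-by-character estimate followed by a careful sum over $\chi$.

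For part (\ref{T:effective i}), AHC ensures each $L(s,\chi)$ with $\chi\neq 1$ is entire, and GRH for $L$ forces its non-trivial zeros onto the critical line.  A standard Lagarias--Odlyzko explicit-formula argument for $L(s,\chi)$, truncating the sum over zeros at height $T\asymp x^{1/2}$, yields for each irreducible $\chi$ the bound
\[
\widetilde\pi_\chi(x) = \delta_{\chi,1}\,\Li(x) + O\bigl(\chi(1)\,[K:\QQ]\,x^{1/2}\log(M(L/K)x)\bigr),
\]
the factor $\chi(1)\,[K:\QQ]$ being the degree of $L(s,\chi)$, which controls both the archimedean gamma factors and the density of zeros up to height $T$.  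Multiplying by $|c_\chi|$ and summing over $\chi$, Cauchy--Schwarz combined with the Parseval identity $\sum_\chi|c_\chi|^2 = |C|/|G|$ and the dimension identity $\sum_\chi \chi(1)^2 = |G|$ gives $\sum_\chi|c_\chi|\,\chi(1) \leq |C|^{1/2}$, producing the claimed error $|C|^{1/2}\,[K:\QQ]\,x^{1/2}\log(M(L/K)x)$.

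For part (\ref{T:effective ii}), the abelianness of $G$ makes every $\chi$ one-dimensional and identifies $L(s,\chi)$ with a Hecke $L$-function over $K$; in particular AHC is automatic and classical unconditional analytic input becomes available.  The standard Landau-type zero-free region $\sigma > 1 - c/\log(M(L/K)(|t|+2))$, combined with the explicit formula and truncation, gives under the hypothesis $\log x \gg [K:\QQ]\log^2 M(L/K)$ a character-wise estimate of the form
\[
\widetilde\pi_\chi(x) = \delta_{\chi,1}\Li(x) - \varepsilon_\chi\,\Li(x^{\beta_L}) + O\Bigl([K:\QQ]\,x\exp\bigl(-c(\log x)^{1/2}/[K:\QQ]^{1/2}\bigr)\log^2(M(L/K)x)\Bigr),
\]
where $\varepsilon_\chi \in \{0,1\}$ records whether $\chi$ is the real character responsible for the Siegel zero $\beta_L$ of $\zeta_L$.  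Summing against $c_\chi$ and invoking the same Cauchy--Schwarz bound collects the exceptional contributions into the single main-term-like term $\tfrac{|C|}{|G|}\Li(x^{\beta_L})$ and yields the claimed error.

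The chief technical obstacle is establishing the single-character estimates with the precise dependence on $\chi(1)$, $[K:\QQ]$, and $M(L/K)$ displayed above; this is the heart of the Lagarias--Odlyzko--Serre explicit-formula methodology and requires a careful truncation of the sum over zeros together with upper bounds on the density of zeros of Artin/Hecke $L$-functions up to a given analytic height.  Once that is in hand, passing between $\widetilde\pi_C$ and $\pi_C$ and extracting the orthogonality-based $|C|^{1/2}$ savings is routine bookkeeping that only contributes to lower-order error terms.
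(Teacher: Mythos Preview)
The paper does not actually prove this theorem: it simply cites part~(\ref{T:effective i}) as Corollary~3.7 of Murty--Murty--Saradha \cite{MR935007} and part~(\ref{T:effective ii}) as the abelian specialization of Theorem~4.6 of V.~K.~Murty \cite{MR1694997}.  Your sketch is essentially the argument that appears in those references --- character decomposition of $\delta_C$, Lagarias--Odlyzko explicit-formula estimates for each $L(s,\chi)$ with the correct conductor dependence, and the Cauchy--Schwarz/Parseval bound $\sum_\chi |c_\chi|\,\chi(1)\le |C|^{1/2}$ --- so there is no substantive divergence from the literature the paper invokes.

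One slip worth flagging: your bound $\pi_C - \widetilde\pi_C = O([L:\QQ]\,x^{1/2})$ is \emph{not} absorbed by the target error $|C|^{1/2}[K:\QQ]\,x^{1/2}\log(M(L/K)x)$ in general, since $[L:\QQ]=|G|\,[K:\QQ]$ can vastly exceed $|C|^{1/2}[K:\QQ]\log(M(L/K)x)$ (take $|C|=1$ and $|G|$ large).  The correct bound, obtained by counting ramified primes and higher prime powers over $K$ rather than over $L$, is $O\bigl([K:\QQ](x^{1/2}/\log x + \log M(L/K))\bigr)$ (this is Lemma~\ref{L:good approximation} in the paper), and that \emph{is} absorbed.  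A second minor imprecision: in part~(\ref{T:effective ii}) the exceptional term is controlled not by Cauchy--Schwarz but simply by the trivial bound $|c_{\chi_0}|\le |C|/|G|$ for the single real character $\chi_0$ carrying the Siegel zero.  With these corrections your plan matches the cited proofs.
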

\begin{proof}
Part (\ref{T:effective i}) is Corollary~3.7 of \cite{MR935007}.     Part (\ref{T:effective ii}) is a special case of Theorem~4.6 of \cite{MR1694997} where the group is abelian.  In the notation of \cite{MR1694997},  we have $H=1$ and $G/H$  abelian, so $d_{G/H}=1$ and $|\chi_{G/H}(\bbar{C})|\leq |C|$.
\end{proof}

\subsection{Smoothed Chebotarev}

We will prove the following smoothed analogue of Chebotarev in \S\ref{S:smoothed proof}.

\begin{theorem} \label{T:smoothed}
Let $L/K$ be a finite Galois extension of number fields with Galois group $G$.  Assume that AHC holds for the extension $L/K$ and that GRH holds for $L$.    Let $C$ be a subset of $G$ stable under conjugation.    

Take any smooth function $f\colon (0,\infty) \to \RR$ with compact support.  Then for $x\geq 2$, we have
\begin{align*}
&\sum_{\p} \delta_C(\Frob_\p) \log N(\p) \cdot f(N(\p)/x) 
=  \frac{|C|}{|G|} \int^\infty_0 f(t) dt\, \cdot x + O_f\Big(|C|^{1/2}  [K:\QQ]  x^{1/2} \log M(L/K) \Big),
\end{align*}
where the sum is over all the primes $\p \in \Sigma_K$ that are unramified in $L$.
\end{theorem}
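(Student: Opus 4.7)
The plan is to follow the classical analytic route: decompose $\delta_C$ into irreducible characters of $G$, apply Mellin inversion to each character sum via the logarithmic derivative of the corresponding Artin $L$-function, shift contours, and collect residues. The only essentially new ingredient over Theorem~\ref{T:effective}(\ref{T:effective i}) is that smoothness and compact support of $f$ make its Mellin transform $\tilde f(s)=\int_0^\infty f(t)t^{s-1}\,dt$ entire and faster than polynomially decreasing on every vertical strip; this decay eliminates the usual truncation at height $T$ and thereby saves one factor of $\log x$.

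First I would decompose $\delta_C=\sum_\chi c_\chi \chi$ over irreducible characters of $G$, with $c_\chi = |G|^{-1}\sum_{g\in C}\overline{\chi(g)}$, so that $\sum_\chi |c_\chi|^2 = |C|/|G|$ and $c_1 = |C|/|G|$. Enlarging the sum to include all prime powers (the $m\ge 2$ and ramified-prime contributions are absolutely bounded by $O_f(x^{1/2} + [K:\QQ]\log M(L/K))$ and absorbed into the error), Mellin inversion together with the Euler-product identity $-L'(s,\chi)/L(s,\chi)=\sum_{\p,m\ge 1}\chi(\Frob_\p^m)\log N(\p)\,N(\p)^{-ms}$ yields, for any $c>1$,
\[
 \sum_{\p,m\ge 1}\chi(\Frob_\p^m)\log N(\p)\cdot f(N(\p)^m/x)=\frac{1}{2\pi i}\int_{(c)}-\frac{L'(s,\chi)}{L(s,\chi)}\tilde f(s)\,x^s\,ds.
\]
I then shift the contour to $\mathrm{Re}(s)=-1/2$. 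Under AHC, $L(s,\chi)$ is entire for $\chi\ne 1$, while for $\chi=1$ the function $-L'/L$ has a simple pole of residue $1$ at $s=1$. The corresponding residue $\tilde f(1)\cdot x = x\int_0^\infty f(t)\,dt$, multiplied by $c_1=|C|/|G|$, produces exactly the claimed main term.

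The remaining residues come from zeros $\rho$ of $L(s,\chi)$, which under GRH satisfy $|x^\rho|=x^{1/2}$. Combining the classical zero-density estimate (the number of zeros with $T\le |\mathrm{Im}(\rho)|\le T+1$ is $\ll \log Q(\chi)+\dim(\chi)[K:\QQ]\log(|T|+2)$, where $Q(\chi)$ is the analytic conductor of $L(s,\chi)$) with the rapid decay of $\tilde f$ yields $\sum_\rho |\tilde f(\rho)| \ll_f \log Q(\chi)+\dim(\chi)[K:\QQ]$ \emph{without any truncation}; this is precisely where the $\log x$ is saved compared with the unsmoothed version. The shifted-line integral on $\mathrm{Re}(s)=-1/2$ is bounded similarly, using the functional equation and convexity estimates for $L'/L$.

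Summing over $\chi$, the total error is $\ll_f x^{1/2}\sum_\chi |c_\chi|\bigl(\log Q(\chi)+\dim(\chi)[K:\QQ]\bigr)$. By standard conductor-discriminant estimates one has $\log Q(\chi)\ll \dim(\chi)[K:\QQ]\log M(L/K)$, and Cauchy--Schwarz with $\sum_\chi|c_\chi|^2=|C|/|G|$ and $\sum_\chi(\dim\chi)^2=|G|$ gives $\sum_\chi |c_\chi|\dim(\chi)\le |C|^{1/2}$, producing the claimed bound $|C|^{1/2}[K:\QQ]x^{1/2}\log M(L/K)$. The main obstacle I anticipate is precisely this final character-theoretic bookkeeping: combining the Plancherel-type bound on $c_\chi$ with the zero-counting bounds in just the right way so that $|C|^{1/2}$ (not $|C|$) and $[K:\QQ]$ (not $[L:\QQ]$) appear with the correct powers, while the $\log M(L/K)$ factor rather than $\log M(L/K)\cdot \log x$ rests entirely on the rapid decay of $\tilde f$.
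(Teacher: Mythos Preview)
Your proposal is correct and follows essentially the same route as the paper: decompose $\delta_C$ into irreducible characters, bound each character sum by $O_f(\chi(1)[K:\QQ]x^{1/2}\log M(L/K))$ using GRH and the rapid decay of $\tilde f$, and then recombine via Cauchy--Schwarz with $\sum_\chi|c_\chi|^2=|C|/|G|$ and $\sum_\chi\chi(1)^2=|G|$ to produce the $|C|^{1/2}$. The only cosmetic difference is that the paper invokes the Iwaniec--Kowalski explicit formula as a black box (Lemma~\ref{L:key bounds}) in place of your Mellin-inversion-and-contour-shift, and organizes the Cauchy--Schwarz step as a Plancherel identity over conjugacy classes (Lemma~\ref{L:alway Cheb}) rather than directly as $\sum_\chi|c_\chi|\chi(1)\le|C|^{1/2}$.
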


In our application, we are only interested in asymptotic upper bounds for $\pi_C(x,L/K)$, so there is no harm in counting using a smoothed weight.  Under GRH, the following gives an upper bound that cannot be deduced from Theorem~\ref{T:effective}(\ref{T:effective i}).

\begin{theorem} \label{T:Chebotarev upper bound}
Let $L/K$ be a Galois extension of number fields with Galois group $G$.  Assume that AHC holds for $L/K$ and that GRH holds for $L$.  Let $C$ be a subset of $G$ that is stable under conjugation. Then
\[
\pi_C(x,L/K) \ll \frac{|C|}{|G|}\,  \frac{x}{\log x} + |C|^{1/2}\, [K:\QQ] \frac{x^{1/2}}{\log x}  \log  M(L/K).
\]
\end{theorem}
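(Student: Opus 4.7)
The plan is to deduce the unweighted upper bound from the smoothed identity of Theorem~\ref{T:smoothed} by a dyadic decomposition. Fix once and for all a smooth function $f\colon (0,\infty)\to[0,\infty)$, with compact support in $[1/3,2]$, satisfying $f(t)\geq 1$ for $t\in[1/2,1]$. For any $y\geq 4$, every prime $\p\in\Sigma_K$ with $y/2<N(\p)\leq y$ contributes at least $\log(y/2)\geq \tfrac{1}{2}\log y$ to the smoothed sum $S_f(y):=\sum_{\p}\delta_C(\Frob_\p)\log N(\p)\cdot f(N(\p)/y)$, so
\[
\pi_C(y,L/K)-\pi_C(y/2,L/K)\;\leq\;\frac{2}{\log y}\, S_f(y).
\]
Applying Theorem~\ref{T:smoothed} to $S_f(y)$ yields
\[
\pi_C(y,L/K)-\pi_C(y/2,L/K)\;\ll\;\frac{1}{\log y}\left(\frac{|C|}{|G|}\,y\;+\;|C|^{1/2}[K:\QQ]\,y^{1/2}\log M(L/K)\right),
\]
with an absolute implied constant (depending only on our fixed choice of $f$).

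Next I would sum this estimate telescopically over $y=x,x/2,x/4,\dots$, stopping at the largest $J$ with $x/2^J\geq 4$. Writing $\pi_C(x,L/K) = \pi_C(4,L/K)+\sum_{k=0}^{J-1}\bigl(\pi_C(x/2^k,L/K)-\pi_C(x/2^{k+1},L/K)\bigr)$, we obtain
\[
\pi_C(x,L/K)\;\ll\; 1 \;+\; \sum_{k=0}^{J-1}\frac{1}{\log(x/2^{k+1})}\!\left(\frac{|C|}{|G|}\cdot\frac{x}{2^k}+|C|^{1/2}[K:\QQ]\Big(\frac{x}{2^k}\Big)^{1/2}\log M(L/K)\right).
\]
To estimate the two resulting sums, I would approximate each by an integral via the substitution $u=x/2^{k}$ (so that $du/u\asymp dk$). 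The first sum is bounded, up to an absolute constant, by $\frac{|C|}{|G|}\int_{4}^{x}\frac{du}{\log u}\ll \frac{|C|}{|G|}\cdot\frac{x}{\log x}$. For the second sum one gets $|C|^{1/2}[K:\QQ]\log M(L/K)\cdot\int_{4}^{x}\frac{du}{u^{1/2}\log u}$; this integral is dominated by its tail near $x$, giving $\ll x^{1/2}/\log x$. Adding the two estimates yields precisely the bound claimed in Theorem~\ref{T:Chebotarev upper bound}.

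No step in this argument is really an obstacle — all the analytic input is concentrated in Theorem~\ref{T:smoothed}. The only points requiring a bit of care are making sure that (a) $\log(x/2^{k+1})$ remains bounded below by an absolute positive constant for $k\leq J-1$, which is why one truncates at $x/2^J\geq 4$ and absorbs the small-prime contribution into the $O(1)$ term, and (b) the implied constant in the smoothed error term is independent of $x$ once $f$ is fixed, so that the $O_f$ in Theorem~\ref{T:smoothed} becomes an absolute $O$ when we dyadically sum. Once these are verified, the geometric/integral estimates for the two sums give the bound with no further subtleties.
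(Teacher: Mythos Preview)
Your proof is correct and follows essentially the same approach as the paper: deduce the unweighted bound from Theorem~\ref{T:smoothed} via a dyadic decomposition (the paper truncates the dyadic sum at $\sqrt{x}$, handles the primes with $N(\p)\leq\sqrt{x}$ trivially, and divides the weighted count by $\log\sqrt{x}$ once at the end, which is marginally cleaner than carrying $1/\log y$ through each term and comparing to an integral, but the idea is identical). One small correction: the initial term $\pi_C(4,L/K)$ is $O([K:\QQ])$, not $O(1)$, since there can be up to $[K:\QQ]$ primes of $\OO_K$ above each of $2$ and $3$; this is harmlessly absorbed into the second term of the stated bound because $\log M(L/K)\geq\log 2$ and $|C|^{1/2}\geq 1$ whenever $C\neq\emptyset$.
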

\begin{proof}
First fix a smooth function $f\colon (0,\infty)\to \RR$ with compact support that is non-negative and satisfies $f(t) \geq 1$ for all $1/2\leq t \leq 1$.   For every $x\geq 2$, define
\[
A(x):= \sum_{\substack{\p \in \Sigma_K,\, \sqrt{x}\leq N(\p) \leq x \\ \p \text{ unramified in $L$}}} \delta_C(\Frob_\p) \log N(\p) \quad \text{and}\quad \Pi(x) := \sum_{\p \in \Sigma_K, \, x/2\leq N(\p) \leq x} \delta_C(\Frob_\p) \log N(\p).
\]
By our choice of $f$, we have $\Pi(x) \leq \sum_{\p \in \Sigma_K} \delta_C(\Frob_\p) \log N(\p) \cdot f(N(\p)/x)$.  By Theorem~\ref{T:smoothed}, we have $\Pi(x)\ll_f \tfrac{|C|}{|G|} x + |C|^{1/2}  [K:\QQ]  x^{1/2} \log M(L/K)$.

Let $m\geq 1$ be the largest integer for which $x/2^m \geq \sqrt{x}$.  We have $A(x)\leq \sum_{i=0}^m \Pi(x/2^i)$, so our bound for $\Pi(x)$ gives
\begin{align*}
A(x) & \ll_f  \frac{|C|}{|G|} x \sum_{i=0}^m \frac{1}{2^i} + |C|^{1/2}  [K:\QQ]  x^{1/2} \log M(L/K) \sum_{i=0}^m \frac{1}{2^{i/2}} \\
& \ll  \frac{|C|}{|G|} x + |C|^{1/2}  [K:\QQ]  x^{1/2} \log M(L/K).
\end{align*}
There are at most $[K:\QQ]$ primes $\p$ dividing any rational prime $p$, so 
\[
|\{\p \in \Sigma_K : N(\p)\leq \sqrt{x}\}| \leq [K:\QQ] \cdot |\{p: p\leq \sqrt{x}\}| \ll [K:\QQ] \tfrac{\sqrt{x}}{\log \sqrt{x}}.
\]
 Therefore,
\begin{align*}
\pi_C(x,L/K) &\ll  A(x)/\log(\sqrt{x}) + [K:\QQ] x^{1/2}/\log(\sqrt{x})\\
& \ll_f   \frac{|C|}{|G|} \frac{x}{\log x} + |C|^{1/2}  [K:\QQ]  \frac{x^{1/2}}{\log x} \log M(L/K) + [K:\QQ] \frac{x^{1/2}}{\log x}.
\end{align*}
The theorem now follows if $|C|\neq 0$.  If $|C|=0$, then the theorem is trivial.
\end{proof}

For future use, we also give the following consequence of Theorem~\ref{T:smoothed}.

\begin{corollary} \label{C:cheb existence}
Fix notation and assumptions as in Theorem~\ref{T:smoothed}.  Assume that $C\neq \emptyset$.  There is an absolute constant $c>0$ such that if 
\[
x \geq c \frac{\;|G|^2}{|C|} [K:\QQ]^2 \log^2 M(L/K), 
\]
then there is a prime $\p \in \Sigma_K$ unramified in $L$ with $x/2 \leq N(\p) \leq x$ such that $\delta_C(\Frob_\p)=1$.  
\end{corollary}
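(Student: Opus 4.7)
The plan is to deduce the corollary directly from Theorem~\ref{T:smoothed} by choosing a concrete non-negative test function $f$ supported in $[1/2,1]$ and observing that, once $x$ is large enough, the main term in the smoothed count dominates the error term, forcing at least one prime to contribute.

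First I would fix, once and for all, a smooth function $f\colon (0,\infty)\to \RR_{\geq 0}$ with compact support contained in $[1/2,1]$ and with $I:=\int_0^\infty f(t)\,dt >0$. With this choice every term on the left-hand side of Theorem~\ref{T:smoothed},
\[
S(x) := \sum_{\p\text{ unram.}} \delta_C(\Frob_\p)\,\log N(\p)\cdot f\bigl(N(\p)/x\bigr),
\]
is non-negative and vanishes unless $N(\p)\in [x/2,x]$ and $\Frob_\p \in C$. Consequently, proving $S(x)>0$ immediately yields a prime $\p$ of the required kind.

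Next I would apply Theorem~\ref{T:smoothed} with this particular $f$. It gives
\[
S(x) = \frac{|C|}{|G|}\,I\cdot x + O_f\!\Bigl(|C|^{1/2}[K:\QQ]\,x^{1/2}\log M(L/K)\Bigr).
\]
Let $A$ be the (absolute) implied constant coming from this fixed $f$. The main term exceeds the error bound as soon as
\[
\frac{|C|}{|G|}\,I\cdot x \;>\; A\cdot |C|^{1/2}[K:\QQ]\,x^{1/2}\log M(L/K),
\]
which after squaring becomes $x > (A/I)^2\cdot \frac{|G|^2}{|C|}\,[K:\QQ]^2\log^2 M(L/K)$. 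Setting $c:=(A/I)^2$ (an absolute constant since $f$ was fixed) gives exactly the hypothesis of the corollary, and for any such $x$ we have $S(x)>0$, hence a prime $\p$ unramified in $L$ with $N(\p)\in[x/2,x]$ and $\delta_C(\Frob_\p)=1$.

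There is essentially no obstacle: the work has been done in Theorem~\ref{T:smoothed}, and the only mild issue is book-keeping of the implicit constants to verify that they are genuinely absolute. This is automatic because $f$ is chosen independently of $L/K$, $C$, and $x$, so the $O_f$-constant is a single number, and the quantity $I>0$ is likewise a fixed positive number. One small sanity check to include is that $\log M(L/K)\geq \log 2 >0$, so the hypothesis on $x$ is a genuine lower bound and the threshold $c$ does not need to absorb trivial small-$x$ cases.
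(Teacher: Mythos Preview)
Your argument is correct and follows essentially the same route as the paper: fix once and for all a non-negative smooth $f$ supported in $[1/2,1]$ with positive integral, apply Theorem~\ref{T:smoothed}, and observe that the main term dominates the error as soon as $x$ exceeds an absolute multiple of $\frac{|G|^2}{|C|}[K:\QQ]^2\log^2 M(L/K)$, forcing the weighted sum to be positive. The paper phrases this contrapositively (assume no such prime, so the sum vanishes, then rearrange), but the content is identical; the only cosmetic point is that you should take $c$ strictly larger than $(A/I)^2$ to accommodate the non-strict inequality $x\geq c\cdot(\ldots)$ in the hypothesis.
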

\begin{proof}
Let $f\colon (0,\infty)\to \RR$ be a non-negative smooth function with compact support whose support is in the interval $[1/2,1]$ and is non-zero.   Suppose that there are no primes $\p \in \Sigma_K$ with $x/2\leq N(\p) \leq x$ such that $\p$ is unramified in $L$ and $\delta_C(\Frob_\p)=1$.   The sum in Theorem~\ref{T:smoothed} is then zero, so
\[
 \frac{|C|}{|G|} x \ll_f |C|^{1/2}  [K:\QQ]  x^{1/2} \log M(L/K);
\]
note that the integral $\int^\infty_0 f(t) dt$ is positive by our choice of $f$.  Rearranging, we deduce that $x \ll_f |G|^2/|C|\cdot [K:\QQ]^2 \log^2 M(L/K)$.   We obtain the desired contradiction by choosing the constant $c$ in the statement of the corollary sufficiently large.
 \end{proof}

\subsection{Functorial properties} \label{SS:functorial}

Let $H$ be a subgroup of $G$.  Take any class function $\varphi\colon H \to \CC$.  As above, we can define $\widetilde{\pi}_\varphi(x)$; note that $H$ is the Galois group of the extension $L/L^H$.  Define the induced function
\[
\Ind_H^G\varphi \colon G \to \CC,\quad g\mapsto \frac{1}{|H|} \sum_{{t\in G},\,{t^{-1}gt\in H}} \varphi(t^{-1}gt);
\]
it is a class function of $G$.

\begin{lemma} \label{L:invariance under induction}

\begin{romanenum}
\item \label{L:invariance under induction i}
Let $H$ be a subgroup of $G$ and let $\varphi$ be a class function of $H$.  Then $\widetilde{\pi}_{\Ind^G_H \varphi}(x) = \widetilde{\pi}_\varphi(x)$.
\item \label{L:invariance under induction ii}
Let $N$ be a normal subgroup of $G$ and let $\varphi'$ be a class function of $G/N$.   Then $\widetilde{\pi}_{\varphi'}(x)=\widetilde{\pi}_{\varphi}(x)$,
where $\varphi$ is the function obtained by composing the projection $G\to G/N$ with $\varphi'$.
\end{romanenum}
\end{lemma}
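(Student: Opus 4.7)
Both parts are partial-sum analogues of classical functorial identities for Artin $L$-functions---invariance under inflation for (\ref{L:invariance under induction ii}) and under induction for (\ref{L:invariance under induction i})---and my plan is to deduce each by matching contributions at each prime of $K$, exploiting that the inertia-averaged definition of $\psi(\Frob_\p^m)$ is engineered to reproduce the logarithm of the local Euler factor of $L(s,\psi)$ at $\p$.

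For (\ref{L:invariance under induction ii}), fix $\p \in \Sigma_K$ and a prime $\P$ of $L$ above $\p$. The projection $\pi\colon G \to G/N$ sends the decomposition group $D_\P$, inertia group $I_\P$, and arithmetic Frobenius $\sigma_\P$ of $L/K$ at $\p$ to the corresponding data $D_\P N/N$, $I_\P N/N$, $\sigma_\P N$ for $L^N/K$ at $\p$. The restriction of $\pi$ to $\sigma_\P^m I_\P$ surjects onto $(\sigma_\P N)^m(I_\P N/N)$ with every fiber of size $|I_\P \cap N|=|I_\P|/|I_\P N/N|$, so using $\varphi=\varphi'\circ\pi$ and the defining formula one obtains
\[
\varphi(\Frob_\p^m) \;=\; \frac{1}{|I_\P|}\sum_{g \in \sigma_\P^m I_\P} \varphi'(gN) \;=\; \frac{1}{|I_\P N/N|}\sum_{h \in (\sigma_\P N)^m(I_\P N/N)} \varphi'(h),
\]
and the right-hand side is exactly $\varphi'(\Frob_\p^m)$ computed in the extension $L^N/K$. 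Since both $\widetilde{\pi}_\varphi(x)$ and $\widetilde{\pi}_{\varphi'}(x)$ sum over the same pairs $(\p, m)$ with $N(\p)^m \leq x$, the equality follows.

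For (\ref{L:invariance under induction i}), the two sums are indexed by primes of $K$ versus primes of $L^H$, so my plan is to first establish the local identity
\[
\frac{1}{n}\, \Ind_H^G \varphi(\Frob_\p^n) \;=\; \sum_{\substack{\mathfrak{q} \in \Sigma_{L^H},\ \mathfrak{q} \mid \p \\ f_{\mathfrak{q}/\p}\,\mid\, n}} \frac{1}{n/f_{\mathfrak{q}/\p}}\, \varphi(\Frob_\mathfrak{q}^{\, n/f_{\mathfrak{q}/\p}})
\]
at each $\p \in \Sigma_K$ and each $n \geq 1$, where $f_{\mathfrak{q}/\p}$ is the residue degree. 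Substituting $m = n/f_{\mathfrak{q}/\p}$ together with $N(\p)^n = N(\mathfrak{q})^m$ then reindexes the sum defining $\widetilde{\pi}_{\Ind_H^G \varphi}(x)$ as the one defining $\widetilde{\pi}_\varphi(x)$. To prove the local identity in the unramified case, fix $\P \mid \p$ in $L$ with Frobenius $\sigma$ and $D = \langle \sigma \rangle$; the primes $\mathfrak{q}$ of $L^H$ above $\p$ correspond bijectively to double cosets $HgD \in H\backslash G/D$, with $f_{\mathfrak{q}/\p}$ the smallest $f \geq 1$ for which $g\sigma^f g^{-1} \in H$ and $\Frob_\mathfrak{q}$ represented in $H$ by $g\sigma^{f_{\mathfrak{q}/\p}}g^{-1}$. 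I would then expand
\[
\Ind_H^G \varphi(\sigma^n) \;=\; \frac{1}{|H|}\sum_{t\in G,\ t^{-1}\sigma^n t \in H} \varphi(t^{-1}\sigma^n t)
\]
and partition the $t$-sum by double cosets $HtD$; the ramified case is handled analogously, with the inertia-averaging on each side matching up.

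The main bookkeeping obstacle is verifying that the double-coset expansion really produces the stated local identity: for each double coset $HgD$ with $f_{\mathfrak{q}/\p}\mid n$, one must check---using that $\varphi$ is a class function on $H$ and that $|HgD|=|H|\cdot|D|/|g^{-1}Hg\cap D|$---that the contribution to the $t$-sum equals $|H|\cdot f_{\mathfrak{q}/\p}\cdot \varphi(\Frob_\mathfrak{q}^{\,n/f_{\mathfrak{q}/\p}})$, so that division by $|H|$ and by $n$ recovers the claimed coefficient $f_{\mathfrak{q}/\p}/n$.
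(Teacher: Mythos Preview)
The paper does not give its own proof of this lemma; it simply cites Proposition~8 of Serre \cite{MR644559}. Your proposal is therefore not to be compared against an argument in the paper but against the standard proof in the literature, and what you have written is essentially that standard proof. Your treatment of part~(\ref{L:invariance under induction ii}) is complete and correct: the fiber-counting over $I_\P \cap N$ is exactly the right bookkeeping. For part~(\ref{L:invariance under induction i}), your local identity and the double-coset expansion in the unramified case are the classical approach (this is, in effect, Mackey's decomposition applied to the Frobenius), and the reindexing $(\p,n)\leftrightarrow(\mathfrak{q},m)$ is correct. The one place where you are thinner than the cited reference is the ramified case of~(\ref{L:invariance under induction i}): saying it is ``handled analogously'' hides that one must track both the inertia groups $I_\P \subseteq G$ and $I_\P \cap g^{-1}Hg$ (the inertia at $\mathfrak{q}$ in $L/L^H$) and verify that the two inertia-averages interact correctly with the double-coset count; this is routine but not a one-liner. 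If you want a self-contained write-up, that is the paragraph to expand.
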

\begin{proof}
See Proposition~8 of \cite{MR644559}.
\end{proof}

\begin{lemma}
\label{L:invariance C}
\begin{romanenum}
\item \label{L:invariance C i}
Let $H$ be a subgroup of $G$ and let $C$ be a subset of $G$ stable under conjugation.   Suppose that every element of $C$ is conjugate to some element of $H$. Then
\[
\widetilde{\pi}_C(x,L/K) \leq \widetilde{\pi}_{C\cap H}(x,L/L^H).
\]
\item \label{L:invariance C ii}
Let $N$ be a normal subgroup of $G$ and let $C$ be a subset of $G$ stable under conjugation that satisfies $NC \subseteq C$.   Then
\[
\widetilde{\pi}_C(x,L/K) = \widetilde{\pi}_{C'}(x,L^N/K),
\]
where $C'$ is the image of $C$ in $G/N=\Gal(L^N/K)$.  
\end{romanenum}
\end{lemma}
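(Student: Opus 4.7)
My plan is to derive both parts from Lemma~\ref{L:invariance under induction} by comparing $\delta_C$ with a class function induced (resp.\ inflated) from the relevant intermediate field.

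For part~(\ref{L:invariance C i}), I would apply Lemma~\ref{L:invariance under induction}(\ref{L:invariance under induction i}) to $\varphi=\delta_{C\cap H}$, which is a class function of $H=\Gal(L/L^H)$, to obtain $\widetilde{\pi}_{C\cap H}(x,L/L^H)=\widetilde{\pi}_{\Ind_H^G\varphi}(x)$.  The main task then reduces to establishing the pointwise inequality $(\Ind_H^G\delta_{C\cap H})(g)\geq \delta_C(g)$ for every $g\in G$.  When $g\notin C$ this is immediate, since conjugation stability of $C$ means no $G$-conjugate of $g$ lies in $C\cap H$, so both sides vanish.  When $g\in C$, conjugation stability lets me simplify
\[
(\Ind_H^G\delta_{C\cap H})(g) = \tfrac{1}{|H|}\,\bigl|\{t\in G : t^{-1}gt\in H\}\bigr|,
\]
and the set in question is a union of left cosets $tH$, so this number equals the count of fixed points of $g$ acting on $G/H$ by left multiplication.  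The hypothesis that every element of $C$ is $G$-conjugate to some element of $H$ guarantees at least one such fixed point, so the value is $\geq 1=\delta_C(g)$.  This pointwise inequality of non-negative class functions is preserved by the averaging in the definition of $\varphi(\Frob_\p^m)$ at both unramified and ramified primes, hence termwise in the sum defining $\widetilde{\pi}$, and part~(\ref{L:invariance C i}) follows.

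For part~(\ref{L:invariance C ii}), I would instead apply Lemma~\ref{L:invariance under induction}(\ref{L:invariance under induction ii}) to $\varphi'=\delta_{C'}$, a class function of $G/N=\Gal(L^N/K)$.  The inflated function on $G$ is the indicator of the preimage of $C'$ under the projection $G\to G/N$, namely $\delta_{CN}$.  Since $N$ is normal we have $CN=NC$, so the hypothesis $NC\subseteq C$ gives $CN\subseteq C$; the reverse inclusion $C\subseteq CN$ is automatic from $e\in N$.  Hence $CN=C$, the inflated function equals $\delta_C$, and Lemma~\ref{L:invariance under induction}(\ref{L:invariance under induction ii}) directly yields the desired equality $\widetilde{\pi}_C(x,L/K)=\widetilde{\pi}_{C'}(x,L^N/K)$.

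The only genuinely substantive step is the fixed-point count in part~(\ref{L:invariance C i}); everything else is routine bookkeeping with class functions and the observation that pointwise inequalities propagate through the $\widetilde{\pi}$ construction.
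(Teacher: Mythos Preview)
Your proof is correct.  For part~(\ref{L:invariance C ii}) you do exactly what the paper does: observe that the inflation of $\delta_{C'}$ is $\delta_C$ (since $NC=C$) and apply Lemma~\ref{L:invariance under induction}(\ref{L:invariance under induction ii}).

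For part~(\ref{L:invariance C i}) both arguments reduce to Lemma~\ref{L:invariance under induction}(\ref{L:invariance under induction i}) and the pointwise inequality $\Ind_H^G\delta_{C\cap H}\geq\delta_C$, but the computations differ.  The paper decomposes $C$ into $G$-conjugacy classes $C_G(s)$ with representatives $s\in H$, writes $\Ind_H^G\delta_{C_H(s)}=\lambda_s\,\delta_{C_G(s)}$, and uses Frobenius reciprocity to identify $\lambda_s=[\Cent_G(s):\Cent_H(s)]\geq 1$.  Your argument is more direct: for $g\in C$ you recognise $(\Ind_H^G\delta_{C\cap H})(g)$ as the number of fixed points of $g$ on $G/H$, which is positive by hypothesis.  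Your route is shorter and avoids the character-theoretic machinery; the paper's route has the minor advantage of recording the exact constant $\lambda_s$, which is not needed here but can be useful when one wants to track how much is lost in the inequality.
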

\begin{proof}
Part (\ref{L:invariance C ii}) is an immediate consequence of Lemma~\ref{L:invariance under induction}(\ref{L:invariance under induction ii}); note that $\delta_{C}$ is equal to the projection $G\to G/N$ composed with $\delta_{C'}$.

We now prove (\ref{L:invariance C i}).  By assumption on $C$, there is a set $S\subseteq H$ for which we have a disjoint union $C=\cup_{s\in S} C_G(s)$, where $C_G(s)$ is the conjugacy class of $s$ in $G$.   For each $s\in S$, let $C_H(s)$ be the conjugacy class of $s$ in $H$.  We have $(\Ind_H^G \delta_{C_H(s)})(g)=0$ for $g\in G -C_G(s)$, so 
\[
\Ind_H^G \delta_{C_H(s)}=\lambda_s \cdot \delta_{C_G(s)}
\] 
for some $\lambda_s$.  Therefore, $\widetilde\pi_{C_H(s)}(x,L/L^H) = \lambda_s \widetilde\pi_{C_G(s)}(x,L/K)$ by Lemma~\ref{L:invariance under induction}(\ref{L:invariance under induction i}).   We have 
\[
\widetilde\pi_{C}(x,L/K)=\sum_{s\in S} \widetilde\pi_{C_G(s)}(x,L/K) =\sum_{s\in S} \lambda_s^{-1} \widetilde\pi_{C_H(s)}(x,L/L^H).
\] 
Using Frobenius reciprocity, cf.~\cite{MR0450380}*{Theorem~13}, we have
\begin{align*} 
\lambda_s  \cdot {|C_G(s)|}/{|G|} = \big<\lambda_s \cdot  \delta_{C_G(s)}, 1_G  \big>_G 
= \big< \Ind_H^G \delta_{C_H(s)}, 1_G  \big>_G = \big< \delta_{C_H(s)}, 1_H \big>_H = {|C_H(s)|}/{|H|}.
\end{align*}
We have $|C_G(s)|=|G|/|\Cent_G(s)|$ and $|C_H(s)|=|H|/|\Cent_H(s)|$, where $\Cent_H(s)$ and $\Cent_G(s)$ are the centralizers of $s$ in $G$ and $H$, respectively.  Therefore, $\lambda_s^{-1} = [\Cent_G(s): \Cent_H(s)]^{-1} \leq 1$ and hence
\[
\widetilde\pi_{C}(x,L/K)\leq \sum_{s\in S} \widetilde\pi_{C_H(s)}(x,L/L^H) \leq \widetilde{\pi}_{C\cap H}(x,L/L^H).   \qedhere
\] 
\end{proof}

In our applications, we will use Lemma~\ref{L:invariance C} to reduce our computations of $\pi_C(x,L/K)$ to the case where $G$ is abelian.  The following says that $\widetilde\pi_C(x,L/K)$ is a good approximation of $\pi_C(x,L/K)$.  

\begin{lemma} \label{L:good approximation}
For any subset $C$ of $G$ stable under conjugation, we have 
\[
\widetilde\pi_C(x,L/K) = \pi_C(x,L/K) +O\big( [K:\QQ]\big( \tfrac{x^{1/2}}{\log x} + \log M(L/K)\big) \big).
\]
\end{lemma}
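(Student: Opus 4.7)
The plan is to write the difference as
\[
\widetilde\pi_C(x,L/K) - \pi_C(x,L/K) = \sum_{\substack{\p\in\Sigma_K \text{ ramified in } L\\ N(\p)\leq x}} \delta_C(\Frob_\p) \;+\; \sum_{m\geq 2} \frac{1}{m} \sum_{\substack{\p\in\Sigma_K\\ N(\p)^m\leq x}} \delta_C(\Frob_\p^m)
\]
and bound each piece. The first sum records the only discrepancy coming from the $m=1$ terms (since for unramified $\p$ the $m=1$ terms of $\widetilde\pi_C$ agree with those of $\pi_C$), and the second collects all the higher prime-power terms. Throughout, $0\leq \delta_C(\Frob_\p^m)\leq 1$, because $\delta_C(\Frob_\p^m)$ is always an average of values of the $\{0,1\}$-valued function $\delta_C$, so each contribution is controlled by counting primes in $\Sigma_K$.

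The main input for the prime count is the elementary bound
\[
\#\{\p\in\Sigma_K : N(\p)\leq y\} \leq [K:\QQ]\cdot \pi(y) \ll [K:\QQ]\cdot \frac{y}{\log y} \qquad (y\geq 2),
\]
valid because each rational prime $p\leq y$ has at most $[K:\QQ]$ primes of $\OO_K$ lying above it.

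For the ramified sum, I would extract from the very definition of $M(L/K)$ the estimate
\[
M(L/K) \;\geq\; {\prod}_{p\in\calP(L/K)} p \;\geq\; 2^{|\calP(L/K)|},
\]
so $|\calP(L/K)|\ll \log M(L/K)$. Since every prime of $\OO_K$ that ramifies in $L$ lies above some rational prime in $\calP(L/K)$, there are at most $O([K:\QQ]\log M(L/K))$ such primes in $\Sigma_K$, and the first sum is $O([K:\QQ]\log M(L/K))$.

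For the $m\geq 2$ contribution, the $m=2$ term dominates and is bounded by $\tfrac12[K:\QQ]\,\pi(x^{1/2})\ll [K:\QQ]\,x^{1/2}/\log x$. For $m\geq 3$, using $\tfrac{1}{m}\pi(x^{1/m})\ll x^{1/m}/\log x$ and $x^{1/m}\leq x^{1/3}$, the tail $\sum_{m\geq 3}\tfrac{1}{m}\pi(x^{1/m})$ is easily seen to be $\ll x^{1/3}/\log x$, which is absorbed into the $x^{1/2}/\log x$ term. Adding the three bounds yields the claimed estimate. This is essentially routine bookkeeping with no genuine obstacle; the only noteworthy step is extracting $|\calP(L/K)|\ll \log M(L/K)$ directly from the definition of $M(L/K)$.
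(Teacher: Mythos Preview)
Your proof is correct and follows essentially the same route as the paper: split $\widetilde\pi_C-\pi_C$ into the ramified $m=1$ terms and the $m\geq 2$ terms, bound the former via $|\calP(L/K)|\ll \log M(L/K)$ and the latter via the elementary prime count $\pi_K(y)\leq [K:\QQ]\pi(y)$. The only cosmetic difference is that the paper further separates ramified primes by residue degree (its $B_2$ and $B_3$), whereas you handle them all at once; your packaging is slightly cleaner. One tiny quibble: after summing over $m\geq 3$ (there are $O(\log x)$ such $m$), the tail is $\ll x^{1/3}$ rather than $x^{1/3}/\log x$, but this is still absorbed into $x^{1/2}/\log x$, so nothing changes.
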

\begin{proof}
Let $\pi_K(x)$ be the number of $\p \in \Sigma_K$ for which $N(\p)\leq x$.    Since there are at most $[K:\QQ]$ primes $\p\in \Sigma_K$ dividing any $p$, we have $\pi_K(x) \ll [K:\QQ] \cdot x/\log x$.   For each $\p \in \Sigma_K$, let $\deg \p$ be the integer for which $N(\p)=p^{\deg \p}$, where $p$ is the prime divisible by $\p$.   Define the sums
\[
B_1:= \sum_{m\geq 2} \sum_{\substack{\p\in \Sigma_K \\ N(\p)^m \leq x}} \frac{1}{m}, \quad 
B_2:= \sum_{\substack{\p\in \Sigma_K, \, \deg \p > 1\\ N(\p)\leq x}} 1 \quad \text{ and } \quad
B_3:= \sum_{\substack{\p \in \Sigma_K \text{ ramified in $L$} \\ \deg \p =1 ,\, N(\p)\leq x} } 1.
\]
We have $0\leq \widetilde\pi_C(x,L/K)-\pi_C(x,L/K) \leq B_1+B_2+B_3$, so it suffices to bound the $B_1$, $B_2$ and $B_3$.

Let $M\geq 1$ be the largest integer for which $x^{1/M} \geq 2$.  We have
\[
B_1 \leq \sum_{m=2}^M \frac{1}{m} \pi_K(x^{1/m}) \leq [K:\QQ] \sum_{m=2}^M \frac{1}{m} \frac{x^{1/m}}{\log(x^{1/m})} \leq [K:\QQ] \frac{x^{1/2}}{\log x} \sum_{m=2}^M \frac{1}{m^2} \ll [K:\QQ] \frac{x^{1/2}}{\log x}.
\]
We have $B_2 \leq \sum_{p \leq \sqrt{x}} [K:\QQ] \ll [K:\QQ] \cdot x^{1/2}/\log x$.  If $\p \in \Sigma_K$ is a prime with $\deg \p =1$ that ramifies in $L$, then $N(\p) \in \calP(L/K)$, where $\calP(L/K)$ is the set from the definition of $M(L/K)$.   Since there are at most $[K:\QQ]$ primes $\p\in \Sigma_K$ dividing any $p$, we have $B_3 \leq [K:\QQ] \sum_{p\in \calP(L/K)} \log p \leq  [K:\QQ] \log M(L/K)$.
\end{proof}

\section{Galois representations}   \label{S:Galois representations}

\subsection{Elliptic curves}

Fix a non-CM elliptic curve $E$ defined over $\QQ$.  For each prime $\ell$, let $E[\ell]$ be the $\ell$-torsion subgroup of $E(\Qbar)$; it is a free $\FF_\ell$-module of rank $2$.    There is a natural action of $\Gal_\QQ$ on $E[\ell]$ that respects the group structure and can be expressed in terms of a Galois representation
\[
\rho_{E,\ell}\colon \Gal_\QQ \to \Aut_{\FF_\ell}(E[\ell]) \cong \GL_2(\FF_\ell).
\]
The representation $\rho_{E,\ell}$ is unramified at all primes $p\nmid N_E \ell$ and we have
\[
\det(xI - \rho_{E,\ell}(\Frob_p))\equiv x^2- a_p(E) x + p \pmod{\ell}.
\]
We have $\det \circ \rho_{E,\ell} = \chi_\ell$, where $\chi_\ell\colon \Gal_\QQ \to \FF_\ell^\times$ is the representation for which $\sigma(\zeta)=\zeta^{\chi_\ell(\sigma)}$ for all $\ell$-th roots of unity $\zeta\in \Qbar$.  The following is an important theorem of Serre, cf.~\cite{MR0387283}.

\begin{theorem}[Serre]   \label{T:Serre}
The representation $\rho_{E,\ell}$ is surjective for all but finitely many primes $\ell$.  
\end{theorem}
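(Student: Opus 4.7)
The plan is to show that, for all sufficiently large primes $\ell$, the image $G_\ell:=\rho_{E,\ell}(\Gal_\QQ)$ equals $\GL_2(\FF_\ell)$.  Since $\det\circ\rho_{E,\ell}=\chi_\ell$ is already surjective, it suffices to show $G_\ell\supseteq \SL_2(\FF_\ell)$, or equivalently (for $\ell\geq 5$) that the projective image $\overline{G}_\ell\subseteq \PGL_2(\FF_\ell)$ contains $\PSL_2(\FF_\ell)$.  The first step is to invoke Dickson's classification: any proper subgroup of $\PGL_2(\FF_\ell)$ not containing $\PSL_2(\FF_\ell)$ is either (i) contained in the image of a Borel subgroup, (ii) contained in the image of the normalizer of a split Cartan, (iii) contained in the image of the normalizer of a non-split Cartan, or (iv) an ``exceptional'' subgroup isomorphic to $A_4$, $S_4$, or $A_5$.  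It therefore suffices to rule out each of (i)--(iv) for $\ell$ sufficiently large in terms of $E$.

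The exceptional case (iv) is the quickest: the quantity $\operatorname{tr}(g)^2/\det(g) \bmod \ell$ descends to a well-defined function on $\overline{G}_\ell$, so it takes at most $60$ values; specializing to $\Frob_p$ it becomes $a_p(E)^2/p \bmod \ell$, and choosing small primes $p\neq q$ of good reduction sharing the same value gives $\ell\mid a_p(E)^2 q - a_q(E)^2 p$, which, combined with Hasse's bound and the fact that the right side is non-zero for some such pair (since $E$ is non-CM), bounds $\ell$.  The Borel case (i) produces a $\Gal_\QQ$-stable line in $E[\ell]$, equivalently a $\QQ$-rational cyclic $\ell$-isogeny; Serre's original treatment bounds such $\ell$ using the Tate curve at a prime of multiplicative reduction together with integrality of the $j$-invariant, and one may alternatively appeal to Mazur's theorem on rational isogenies.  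Case (ii) descends, on the quadratic extension cut out by the sign character of the split-Cartan normalizer, to a reducible $\ell$-adic representation over a quadratic field; a Galois-descent argument---using that $E$ is non-CM to exclude accidental CM over the quadratic base---reduces it to case (i).

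The main obstacle is the non-split Cartan normalizer case (iii), which is the subtlest piece of Serre's proof.  Here $\overline{G}_\ell$ lies in a dihedral group of order $2(\ell+1)$, so on an index $2$ open subgroup $H\subseteq \Gal_\QQ$ the projective representation is cyclic; $H$ corresponds to an imaginary quadratic field $K$, into whose non-split Cartan the restriction $\rho_{E,\ell}|_H$ lands.  To rule this out for large $\ell$, one works with the compatible system $\{\rho_{E,\ell}\}_\ell$ and applies class field theory for $K$ together with a careful study of inertia at primes of bad reduction (together with the shape of the Frobenius eigenvalues at primes of good reduction) to show that, if case (iii) persisted for infinitely many $\ell$, then $\End_{\Kbar}(E)$ would contain an order in $K$---contradicting the non-CM hypothesis.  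Combining the bounds obtained in the four cases yields a finite set outside of which $\rho_{E,\ell}$ is surjective, as required.
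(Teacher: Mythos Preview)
The paper does not supply a proof of this theorem: it is stated with a citation to Serre's original article \cite{MR0387283} and used as a black box.  So there is no ``paper's own proof'' to compare against; what you have written is a sketch of Serre's argument itself.

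Your outline follows Serre's strategy (Dickson's classification plus case-by-case elimination), and cases (i) and (iv) are essentially fine, but your treatment of the Cartan-normalizer cases (ii) and (iii) has a real gap.  In both cases you pass to the quadratic field $K$ cut out by the sign character of the normalizer and then argue over $K$ (reducing (ii) to the Borel case, and in (iii) producing CM by $K$).  The problem is that $K$ a~priori depends on $\ell$: you cannot invoke a single instance of case (i) over a moving base field, and in (iii) there is no reason yet for $K$ to be imaginary, nor for the same $K$ to recur for infinitely many $\ell$.  The missing step---and this is exactly the delicate point in Serre's proof---is to show that the sign character is \emph{unramified at $\ell$} (using the explicit shape of the image of inertia at $\ell$), hence has conductor dividing $N_E$, so that only finitely many quadratic fields $K$ can arise as $\ell$ varies.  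One then fixes such a $K$, observes that $\rho_{E,\ell}|_{\Gal_K}$ has abelian image for infinitely many $\ell$, and appeals to Serre's theory of locally algebraic abelian $\ell$-adic representations to force $E$ to acquire CM over $K$, a contradiction.  Serre in fact handles (ii) and (iii) by this single uniform argument rather than reducing (ii) to (i).  (A smaller quibble: in case (i) the Tate-curve argument and the integral-$j$ argument are the two \emph{complementary} halves of Serre's treatment, not ingredients to be combined.)
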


\subsection{Some class field theory}  Fix an imaginary quadratic field $k$ and let $\calH$ be its Hilbert class field.   Denote the ring of integers of $k$ by $\OO$.    Take any non-zero prime ideal $\p$ of $\OO$.   Let $v_\p\colon k^\times \twoheadrightarrow \ZZ$ be the surjective discrete valuation corresponding to $\p$ which we extend by setting $v_\p(0)=+\infty$.

Fix an integer $m\geq 1$.  Let $I_k^{m}$ be the group of fractional ideals of $k$ generated by prime ideals $\p \nmid m$ of $\OO$.     Let 
\[
\iota\colon k_m:= \{ a\in k^\times: v_\p(a)=0 \text{ for all }\p|m\} \to I_k^{m}
\]
be the homomorphism that takes an element of $k_m$ to the fractional ideal of $k$ it generates.    The \defi{ray class group modulo $m$} is 
the group
\[
\Cl_m := I_k^{m}/\iota(k_{m,1}),
\]
where $k_{m,1} := \{ a\in k_m: v_\p(a-1) \geq v_\p(m) \text{ for all }\p|m \}$.    Note that $\Cl_{1}$ is the usual class group of $\OO$ which we will also denote by $\Cl_k$.

By class field theory, there is a continuous homomorphism 
\[
\widetilde\psi_{k,m} \colon \Gal_k \to \Cl_m
\]
such that for each prime $\p\nmid m$ of $\OO$, $\widetilde\psi_{k,m}$ is unramified at $\p$ and $\widetilde\psi_{k,m}(\Frob_\p)$ is the class of $\Cl_m$ represented by $\p$.   The map $\widetilde\psi_{k,m}$ is clearly surjective. 

  The homomorphism $\widetilde\psi_{k,1}$ is unramified at all non-zero prime ideals $\p$ of $\OO$ and has image $\Cl_k$.   By class field theory, we deduce that the fixed field in $\kbar$ of $\ker(\widetilde\psi_{k,1})$ is $\calH$, i.e., the Hilbert class field of $k$.

\begin{lemma} \label{L:class group sequence}
If $m\geq 5$, then there is an exact sequence of groups
\[
1 \to \OO^\times \xrightarrow{\alpha_m}  (\OO/m\OO)^\times \xrightarrow{\beta_m} \Cl_{m} \xrightarrow{\gamma_m}  \Cl_k \to 1,
\]
where $\alpha_m$ is reduction modulo $m$, $\beta_m$ maps a coset $a+m\OO$ to the class of $\Cl_{\m}$ containing $a\OO$, and $\gamma_m$ is induced by the natural map $I_k^{m}\to \Cl_k$.
\end{lemma}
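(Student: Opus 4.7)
The plan is to verify exactness of the sequence at each of its four positions, after first checking that the three maps are well defined. For $\gamma_m$ this is immediate because $\iota(k_{m,1})$ consists of principal fractional ideals, which map to $0$ in $\Cl_k$. For $\beta_m$: if $a,a' \in \OO$ are coprime to $m$ and $a \equiv a' \pmod{m\OO}$, then $a/a' \in k_m$ and $v_\p(a/a' - 1) = v_\p(a - a') - v_\p(a') \geq v_\p(m)$ for every $\p \mid m$, so $a/a' \in k_{m,1}$ and $a\OO$, $a'\OO$ represent the same class in $\Cl_m$.

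Next I would dispose of the easy parts. Surjectivity of $\gamma_m$ holds because every class in $\Cl_k$ has an integral representative coprime to $m$, a standard consequence of strong approximation on $k$. The containments $\im(\alpha_m) \subseteq \ker(\beta_m)$ and $\im(\beta_m) \subseteq \ker(\gamma_m)$ are trivial: for $u \in \OO^\times$ we have $\beta_m(u + m\OO) = [u\OO] = [\OO] = [\iota(1)] = 0$, and for $a$ coprime to $m$ we have $\gamma_m\beta_m(a + m\OO) = [a\OO] = 0$ in $\Cl_k$.

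For the reverse containments, the easier one is $\ker(\beta_m) \subseteq \im(\alpha_m)$: if $\beta_m(a + m\OO) = 0$, then $a\OO = c\OO$ for some $c \in k_{m,1}$, so $a = uc$ for a unit $u \in \OO^\times$; the condition $v_\p(c - 1) \geq v_\p(m)$ for every $\p \mid m$ gives $c \equiv 1 \pmod{m\OO}$, hence $a \equiv u \pmod{m\OO}$. The more delicate one is $\ker(\gamma_m) \subseteq \im(\beta_m)$. Given $\mathfrak{a} \in I_k^m$ with $[\mathfrak{a}] \in \ker(\gamma_m)$, I would write $\mathfrak{a} = b\OO$ for some $b \in k^\times$; since $\mathfrak{a}$ is coprime to $m$, $v_\p(b) = 0$ for every $\p \mid m$. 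Using the Chinese remainder theorem $\OO/m\OO \cong \prod_{\p \mid m} \OO/\p^{v_\p(m)}$, I would lift the image of $b$ in this product to an element $a \in \OO$ coprime to $m$ with $a \equiv b$ modulo each $\p^{v_\p(m)}$. Then $b/a \in k_{m,1}$, so $\iota(b/a)$ is trivial in $\Cl_m$ and therefore $[\mathfrak{a}] = [a\OO] = \beta_m(a + m\OO)$.

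Finally, injectivity of $\alpha_m$ is where the hypothesis $m \geq 5$ enters. Since $k$ is imaginary quadratic, $\OO^\times$ consists of roots of unity, so for any nontrivial $u \in \OO^\times$ and any complex embedding $\sigma\colon k \hookrightarrow \CC$ we have $|\sigma(u) - 1| \leq 2$, and hence $|N_{k/\QQ}(u - 1)| = |\sigma(u) - 1|^2 \leq 4$. If $u \in \ker(\alpha_m)$ with $u \neq 1$, then $u - 1$ is a nonzero element of $m\OO$, so $m^2 = N(m\OO)$ must divide $|N_{k/\QQ}(u - 1)| \leq 4$, which is impossible for $m \geq 5$ (indeed $m \geq 3$ would already suffice). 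I expect the CRT/approximation step at $\Cl_m$ to be the only part requiring real care; everything else reduces to chasing definitions and tracking valuations.
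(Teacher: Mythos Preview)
Your proof is correct and covers the same ground as the paper's, though organized differently: the paper first establishes the exact sequence $1 \to \OO^\times \to k_m/k_{m,1} \to \Cl_m \to \Cl_k \to 1$ and then identifies $k_m/k_{m,1}\cong(\OO/m\OO)^\times$ via weak approximation, whereas you verify exactness of the stated sequence directly, position by position. Your CRT step at $\ker(\gamma_m)$ is exactly the paper's weak approximation, and your explicit norm bound for injectivity of $\alpha_m$ unpacks what the paper leaves as the one-line assertion $\OO^\times\cap k_{m,1}=1$ for $m\geq 5$.
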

\begin{proof}
Let $\bbar{\iota}\colon k_m/k_{m,1} \to \Cl_m$ be the group homomorphism induced from $\iota\colon k_m \xrightarrow{\iota} I_k^m$.  Since $\ker(\iota)=\OO^\times$, we have an exact sequence
\[
\OO^\times \to k_m/k_{m,1} \xrightarrow{\bbar \iota} \Cl_m.
\]
We have $\OO^\times \cap k_{m,1} = 1$ since $m\geq 5$.   The group $I_k^m/\iota(k_m)$ is naturally isomorphic to $\Cl_k$.   We thus have an exact sequence
\begin{equation} \label{E:last easy ANT}
1\to \OO^\times \to k_m/k_{m,1} \xrightarrow{\bbar \iota} \Cl_m \to \Cl_k \to 1.
\end{equation}

Finally, we explain why $(\OO/m\OO)^\times$ is isomorphic to $k_m/k_{m,1}$.   We have a natural inclusion $k_m\hookrightarrow \OO_{m}^\times$, where $\OO_{m}$ is the $m$-adic completion of $\OO$.  Composing with the reduction modulo $m$ map gives a group homomorphism, $f\colon k_\m \to (\OO_{m}/m\OO_{m})^\times = (\OO/m\OO)^\times$.   The kernel of $f$ is $k_{m,1}$ and it is surjective by weak approximation.   We thus an induced isomorphism $\overline{f} \colon k_m/k_{m,1} \stackrel{\sim}{\to} (\OO/m\OO)^\times$.  Identifying $k_m/k_{m,1}$ in (\ref{E:last easy ANT}) by $(\OO/m\OO)^\times$ via the isomorphism $\overline{f}$, gives an exact sequence that agrees with the one in the statement of the lemma.
\end{proof}

We now focus on the case where $m$ is a prime $\ell\geq 5$.  By Lemma~\ref{L:class group sequence}, we may view $\OO^\times$ as a subgroup of $(\OO/\ell\OO)^\times$.      With $\gamma_\ell$ as in Lemma~\ref{L:class group sequence}, we have $\widetilde\psi_{k,\ell} = \gamma_\ell \circ \widetilde\psi_{k,1}$.   So by restricting $\widetilde\psi_{k,\ell}$ to $\Gal_\calH$ and using Lemma~\ref{L:class group sequence}, we obtain a surjective homomorphism
\[
\psi_{k,\ell} \colon \Gal_\calH \to  (\OO/\ell\OO)^\times/\OO^\times.
\]

\begin{lemma} \label{L:psim description}
Fix a prime $p\nmid \ell$ that splits completely in $\calH$.   Let $\P$ be a prime ideal of $\OO_\calH$ that divides $p$.    The representation $\psi_{k,\ell}$ is unramified at $\P$ and satisfies 
\[
\psi_{k,\ell}(\Frob_\P) = (\pi+ \ell\OO)\cdot \OO^\times  \in \left(\OO/\ell\OO\right)^\times/\OO^\times,
\]
where $\pi$ is a generator of the prime ideal $\P \cap \OO$ of $\OO$.
\end{lemma}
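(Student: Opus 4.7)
The plan is to compute $\widetilde{\psi}_{k,\ell}(\Frob_\P)$ directly in $\Cl_\ell$, viewing $\Frob_\P\in\Gal_\calH$ as an element of $\Gal_k$, and then to translate the answer through the isomorphism $(\OO/\ell\OO)^\times/\OO^\times \xrightarrow{\sim} \ker(\gamma_\ell)$ induced by $\beta_\ell$ in Lemma~\ref{L:class group sequence}. Set $\p := \P\cap\OO$. Since $p$ splits completely in $\calH$ and $k\subseteq\calH$, the prime $p$ splits completely in $k$, so $\p$ is unramified over $p$ with $f(\p/p)=1$; moreover, $f(\P/\p)=1$. By class field theory, a prime of $k$ splits completely in its Hilbert class field if and only if it is principal, so $\p=\pi\OO$, with $\pi$ as in the statement.

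Unramifiedness of $\psi_{k,\ell}$ at $\P$ is then immediate: $\widetilde{\psi}_{k,\ell}$ is unramified at $\p$ because $\p\nmid\ell$, and $\P/\p$ is unramified because $\calH/k$ is everywhere unramified. To identify $\Frob_\P$ (viewed in $\Gal_k$) with $\Frob_\p$, I would fix a finite Galois extension $L/k$ containing $\calH$ through which $\widetilde{\psi}_{k,\ell}$ factors and choose a prime $\mathfrak{L}$ of $L$ dividing $\P$. Then inside $\Gal(L/k)$ we have $\Frob_{\mathfrak{L}/\P}=\Frob_{\mathfrak{L}/\p}^{f(\P/\p)}=\Frob_{\mathfrak{L}/\p}$, so the defining property of the Artin reciprocity map yields
\[
\widetilde{\psi}_{k,\ell}(\Frob_\P)=\widetilde{\psi}_{k,\ell}(\Frob_\p)=[\p]\in\Cl_\ell.
\]

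To finish, since $\p=\pi\OO$ is principal we have $\gamma_\ell([\p])=1$, and explicitly $\beta_\ell(\pi+\ell\OO)=[\pi\OO]=[\p]$, so under the isomorphism $(\OO/\ell\OO)^\times/\OO^\times\xrightarrow{\sim}\ker(\gamma_\ell)$ provided by Lemma~\ref{L:class group sequence} the class $[\p]$ corresponds to $(\pi+\ell\OO)\OO^\times$. By the definition of $\psi_{k,\ell}$ as $\widetilde{\psi}_{k,\ell}|_{\Gal_\calH}$ composed with the inverse of this isomorphism, this gives exactly the claimed value of $\psi_{k,\ell}(\Frob_\P)$. The one step that deserves any care is the compatibility of the two Frobenius elements, which reduces to the equality $f(\P/\p)=1$ coming from the complete splitting of $p$ in $\calH$; the remainder is bookkeeping through the exact sequence of Lemma~\ref{L:class group sequence}, and I do not expect any substantive obstacles.
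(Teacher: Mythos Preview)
Your proof is correct and follows essentially the same route as the paper's: set $\p=\P\cap\OO$, use that $\p$ splits completely in $\calH$ (hence is principal) and that $\widetilde\psi_{k,\ell}(\Frob_\P)=\widetilde\psi_{k,\ell}(\Frob_\p)=[\p]$, then read off the coset $(\pi+\ell\OO)\OO^\times$ via the exact sequence of Lemma~\ref{L:class group sequence}. Your treatment of the Frobenius compatibility via $f(\P/\p)=1$ is a touch more explicit than the paper's, but the argument is the same.
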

\begin{proof}
Set $\p:=\P \cap \OO$; it is a prime that splits completely in $\calH$.  Since $\p\nmid \ell$, $\widetilde\psi_{k,\ell}$ is unramified at $\p$ and  $\widetilde\psi_{k,\ell}(\Frob_\p)=[\p] \in \Cl_\ell$.    That $\p$ splits completely in $\calH$ implies that $\gamma_\ell([\p])=1$ and hence that $\p$ is indeed principal, say $\p=\pi \OO$.   We have $\gamma_\ell([\p])$, so we may identify $\widetilde\psi_{k,\ell}(\Frob_\p)=[\p]$ with an element of $(\OO/\ell\OO)^\times/\OO^\times$ as viewed above as a subgroup of $\Cl_\ell$; in particular, we identify $\widetilde\psi_{k,\ell}(\Frob_\p)$ with the coset represented by $\pi$.     Finally, since $\p$ splits completely in $\calH$ we find that $\widetilde\psi_{k,\ell}(\Frob_\P)=\widetilde\psi_{k,\ell}(\Frob_\p)$.   Therefore, $\psi_{k,\ell}(\Frob_\p)$ is represented by $\pi$ as desired.
\end{proof}

Let $N_{k/\QQ} \colon\left(\OO/\ell\OO\right)^\times/\OO^\times \to \FF_\ell^\times$ be the homomorphism induces by the usual norm map $N_{k/\QQ}\colon k\to \QQ$; it is well defined since the norm map takes value $1$ on $\OO^\times$.  

\begin{lemma} \label{L:cyclotomic-CFT}
The homomorphism $N_{k/\QQ}\circ \psi_{k,\ell} \colon \Gal_\calH \to \FF_\ell^\times$ agrees with $\chi_\ell|_{\Gal_\calH}$.
\end{lemma}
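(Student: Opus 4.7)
The plan is to prove equality of the two continuous characters $\Gal_\calH \to \FF_\ell^\times$ by checking it on a set of Frobenius elements that is dense in every finite quotient of $\Gal_\calH$.  Specifically, I would use Frobenius elements $\Frob_\P$ attached to primes $\P \in \Sigma_\calH$ lying over rational primes $p \nmid \ell$ that split completely in $\calH$, since Lemma~\ref{L:psim description} supplies an explicit formula for $\psi_{k,\ell}(\Frob_\P)$ precisely in this case.

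The first step would establish the density claim via Chebotarev.  Given any finite Galois extension $M/\calH$, let $\widetilde M$ be its Galois closure over $\QQ$.  For any $\sigma \in \Gal(M/\calH)$, lift it to $\widetilde\sigma \in \Gal(\widetilde M/\calH) \subseteq \Gal(\widetilde M/\QQ)$; by Chebotarev applied to $\widetilde M/\QQ$, there are infinitely many rational primes $p \nmid \ell$ whose Frobenius conjugacy class in $\Gal(\widetilde M/\QQ)$ contains $\widetilde \sigma$.  Since $\widetilde \sigma$ fixes $\calH$, any such $p$ splits completely in $\calH$, and the Frobenius $\Frob_\P \in \Gal(M/\calH)$ for a suitable prime $\P$ of $\calH$ over $p$ equals $\sigma$.

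The second step would compute both characters on such a $\Frob_\P$.  Set $\p = \P\cap \OO$; since $p$ splits completely in $\calH$, the ideal $\p$ is principal, so pick a generator $\pi \in \OO$ with complex conjugate $\overline{\pi}$.  By Lemma~\ref{L:psim description}, $\psi_{k,\ell}(\Frob_\P) = (\pi + \ell\OO)\cdot\OO^\times$.  Because $k$ is imaginary quadratic, $N_{k/\QQ}(\pi) = \pi\overline{\pi} = |\pi|^2$ is a positive integer generating the ideal $\p\overline{\p} = p\OO$, hence equals $p$.  Therefore $(N_{k/\QQ}\circ \psi_{k,\ell})(\Frob_\P) \equiv p \pmod \ell$.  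On the other side, $p$ is unramified in $\QQ(\zeta_\ell)$ and $\Frob_\P$ restricts to $\Frob_p \in \Gal(\QQ(\zeta_\ell)/\QQ)$, so $\chi_\ell(\Frob_\P) \equiv p \pmod \ell$ as well.

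The principal subtlety is the density statement in the first step, which hinges on the observation that the severe constraint ``$p$ splits completely in $\calH$'' is automatic for Frobenii arising from lifts of arbitrary elements of $\Gal(M/\calH)$.  Once this is in hand, the remaining computation is essentially a matter of unwinding Lemma~\ref{L:psim description} together with the defining property $\chi_\ell(\Frob_p) \equiv p \pmod \ell$.
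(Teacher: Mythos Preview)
Your proposal is correct and follows essentially the same approach as the paper: verify the two characters agree on Frobenii $\Frob_\P$ at primes $\P\in\Sigma_\calH$ lying over rational primes $p\nmid\ell$ that split completely in $\calH$, using Lemma~\ref{L:psim description} and $N_{k/\QQ}(\pi)=p$.  The only difference is in the density step: the paper simply observes that such $\P$ (being exactly the degree-$1$ primes of $\calH$ away from $\ell$) have density $1$, which already forces equality of characters, whereas your Chebotarev-with-Galois-closure argument establishes the stronger (but unnecessary) statement that these Frobenii surject onto every finite quotient of $\Gal_\calH$.
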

\begin{proof}
Take any prime $\P | p$ as in the statement of Lemma~\ref{L:psim description}.   It suffices to show that $N_{k/\QQ}(\psi_{k,\ell}(\Frob_\P))= \chi_\ell(\Frob_\P)$ since such primes $\P$ of $\OO_\calH$ have density $1$.    Since $p$ splits completely in $\calH$, we have $\chi_\ell(\Frob_\P)\equiv N(\p)=p \pmod{\ell}$.

By Lemma~\ref{L:psim description}, we have $N_{k/\QQ}(\psi_{k,\ell}(\Frob_\P))\equiv N_{k/\QQ}(\pi) \pmod{\ell}$, where $\pi\in \OO$ is a generator of the ideal $\P\cap\OO$.  Since $p$ splits completely in $\calH$, and hence also $k$, we have $N_{k/\QQ}(\pi)=p$.   Therefore, $N_{k/\QQ}(\psi_{K,\ell}(\Frob_\P)) \equiv p \equiv \chi_\ell(\Frob_\P) \pmod{\ell}$.
\end{proof}

\subsection{Mixed representations} \label{SS:mixed}

Let $E$ be a non-CM elliptic curve over $\QQ$.  Let $k$ be an imaginary quadratic field, let $\OO$ be its ring of integers, and let $\calH$ be its Hilbert class field.  

Fix a prime $\ell\geq 5$.  We have Galois representations $\rho_{E,\ell}$ and $\psi_{k,\ell}$ from the previous sections.  By Lemma~\ref{L:cyclotomic-CFT}, we have $\det\circ \rho_{E,\ell}|_{\Gal_\calH} = N_{k/\QQ}\circ \psi_{k,\ell}$.   We thus have a well-defined Galois representation
\[
\Psi_\ell\colon \Gal_{\calH} \to \calG,\quad \sigma\mapsto (\rho_{E,\ell}(\sigma),\psi_{k,\ell}(\sigma)),
\]
where $\calG:= \{ (A,u)\in\GL_2(\FF_\ell) \times \left((\OO/\ell\OO)^\times/\OO^\times \right) : \det(A)= N_{k/\QQ}(u) \}$.

The trace map $\Tr_{k/\QQ}\colon k\to \QQ$ induces a linear map $\Tr_{k/\QQ}\colon \OO/\ell\OO \to \FF_\ell$.  For $u\in (\OO/\ell\OO)^\times/\OO^\times$, $\Tr_{k/\QQ}(u)$ is a subset of $\FF_\ell$ of cardinality at most $|\OO^\times|$.

\begin{lemma} \label{L:LT2 key}
Let $p\nmid N_E$ be a prime for which $E$ has ordinary reduction at $p$ and for which $\QQ(\pi_p)$ is isomorphic to $k$.
\begin{romanenum}
\item \label{L:LT2 key i}
The prime $p$ splits completely in $\calH$.
\item \label{L:LT2 key ii}
Take any prime $\P \in \Sigma_\calH$ dividing $\pi_p \OO$.  Then for any prime $\ell\geq 5$ not equal to $p$, the representations $\rho_{E,\ell}$ and $\psi_{k,\ell}$ are unramified at $\P$ and 
\[
\tr(\rho_{E,\ell}(\Frob_\P))\in \Tr_{k/\QQ}(\psi_{k,\ell}(\Frob_\P)).
\]
\end{romanenum}
\end{lemma}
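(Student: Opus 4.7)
The plan is to handle both parts by first extracting the factorization of $p$ in $\OO$ from the Frobenius endomorphism $\pi_p$, and then reading off the Galois-theoretic consequences via Lemma~\ref{L:psim description}. For (i), I would first note that $\pi_p$ is an algebraic integer, since it satisfies $x^2 - a_p(E)x + p = 0$, and hence lies in $\OO$. The norm computation $\pi_p \bar\pi_p = p$ shows that $\pi_p\OO$ and $\bar\pi_p\OO$ are prime ideals of $\OO$ of absolute norm $p$. The crucial step---where the ordinary hypothesis is used---is that these two principal primes are distinct: if they coincided as some $\pp$, then $a_p(E) = \pi_p + \bar\pi_p$ would lie in $\pp \cap \ZZ = p\ZZ$, contradicting ordinarity. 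Thus $p\OO = (\pi_p\OO)(\bar\pi_p\OO)$ is a product of two distinct principal primes, so $p$ splits in $k$ and every prime of $\OO$ above $p$ is principal. By the defining property of the Hilbert class field (class field theory), both primes split completely in $\calH/k$, and hence $p$ splits completely in $\calH/\QQ$.

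For (ii), I would first dispatch unramifiedness. The representation $\rho_{E,\ell}$ is unramified at $\P$ because $E$ has good reduction at the rational prime $p$ below $\P$ and $\ell \neq p$; and $\psi_{k,\ell}$ is unramified at $\P$ because $\P \cap \OO = \pi_p\OO$ is a prime of $\OO$ coprime to $\ell$ (this equality holds because $\P$ divides the ideal $\pi_p\OO$, which is already prime in $\OO$). For the trace containment, I would then combine two observations. First, applying Lemma~\ref{L:psim description} with generator $\pi = \pi_p$ yields
\[
\psi_{k,\ell}(\Frob_\P) = (\pi_p + \ell\OO)\cdot\OO^\times,
\]
so $\Tr_{k/\QQ}(\psi_{k,\ell}(\Frob_\P))$ is a set of residues modulo $\ell$ containing the class of $\Tr_{k/\QQ}(\pi_p) = \pi_p + \bar\pi_p = a_p(E)$ (take the representative $1 \in \OO^\times$). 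Second, since $p$ splits completely in $\calH$ by part (i), the residue field at $\P$ is $\FF_p$, so $\Frob_\P$ acts on $E[\ell]$ as the $p$-power Frobenius of $E_p$, giving $\tr(\rho_{E,\ell}(\Frob_\P)) \equiv a_p(E) \pmod{\ell}$. Comparing these two congruences yields the claimed containment.

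The main obstacle is really confined to part (i): ensuring $\pi_p\OO \neq \bar\pi_p\OO$ is the one place where the ordinary reduction hypothesis is genuinely used, and without it the prime $p$ could ramify in $k$ (indeed, this is exactly what happens in the supersingular case with $a_p = 0$). Once this is settled, the rest of the argument is a direct assembly of class field theory, good reduction, and the previously established Lemma~\ref{L:psim description}.
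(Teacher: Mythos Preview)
Your argument is correct and matches the paper's proof essentially line for line: both use the norm equation to see that $\pi_p\OO$ is a prime above $p$, invoke ordinarity to rule out $\pi_p\OO = \bar\pi_p\OO$ (equivalently, ramification) via $a_p(E)\in\p\cap\ZZ$, deduce complete splitting in $\calH$ from principality, and then combine Lemma~\ref{L:psim description} with $\tr(\rho_{E,\ell}(\Frob_\P))\equiv a_p(E)\pmod\ell$ to obtain the trace containment.
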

\begin{proof}
To ease notation, set $k=\QQ(\pi_p)$.   Since $\pi_p$ is a root of $x^2-a_p(E)x+p$, we have $a_p(E)=\Tr_{k/\QQ}(\pi_p)$ and $p=N_{k/\QQ}(\pi_p)$.  The equality $p=N_{k/\QQ}(\pi_p)$ implies that $p$ is either split or ramified in $k$, so  $p\OO = \p\cdot\p^\tau$, where $\p:=\pi_p \OO$ and $\tau$ is the non-trivial automorphism of $k$.  

We claim that $p$ splits in $k$.  Suppose otherwise that $p$ is ramified in $k$.  We then have
\[
a_p(E) =\Tr_{k/\QQ}( \pi_p )= \pi_p+ \pi_p^\tau  \in \p + \p^\tau =\p
\]
and hence $a_p(E)\equiv 0 \pmod{p}$ which contradicts our assumption that $E$ has ordinary reduction at $p$.

Since $\p$ and $\p^\tau$ are principal ideals in $\OO$, the prime $p$ splits completely in $\calH$.   This completes the proof of (\ref{L:LT2 key i}).

Now take any prime $\ell\nmid 6p$ and any $\P\in \Sigma_\calH$ that divides $\p = \pi_p \OO$.  Since $p$ splits completely in $\calH$, we have $\OO_\calH/\P = \FF_p$.   Therefore, $\rho_{E,\ell}$ is unramified at $\P$ and we have 
\[
\tr(\rho_{E,\ell}(\Frob_\P)) = \tr(\rho_{E,\ell}(\Frob_p)) \equiv a_p(E) \pmod{\ell}
\]
By Lemma~\ref{L:psim description}, we have $\psi_{k,\ell}(\Frob_\P) = (\pi_p + \ell\OO) \cdot \OO^\times$.  The image of $a_p(E)=\Tr_{k/\QQ}(\pi_p)$ in $\FF_\ell$ thus belongs to $\Tr_{k/\QQ}(\psi_{k,\ell}(\Frob_\P))$.    Therefore, $\tr(\rho_{E,\ell}(\Frob_\P))\in \Tr_{k/\QQ}(\psi_{k,\ell}(\Frob_\P))$ as claimed.
\end{proof}

The representation $\Psi_\ell$ is surjective for all sufficiently large $\ell$.

\begin{lemma} \label{L:intersection} 
If $\rho_{E,\ell}$ is surjective, then the representation $\Psi_\ell$ is surjective.
\end{lemma}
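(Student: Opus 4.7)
The plan is to reduce surjectivity of $\Psi_\ell$ to the single claim that $\rho_{E,\ell}(\ker \psi_{k,\ell}) = \SL_2(\FF_\ell)$, and then to establish this by combining the classification of normal subgroups of $\GL_2(\FF_\ell)$ with a solvability argument.

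First I would carry out the reduction. Take any $(A,u)\in \calG$. Since $\psi_{k,\ell}\colon \Gal_\calH \to (\OO/\ell\OO)^\times/\OO^\times$ is surjective, choose $\tau \in \Gal_\calH$ with $\psi_{k,\ell}(\tau)=u$ and set $B:=\rho_{E,\ell}(\tau)$. By Lemma~\ref{L:cyclotomic-CFT} we have $\det B = N_{k/\QQ}(u)=\det A$, so $AB^{-1}\in \SL_2(\FF_\ell)$. If I can produce $\sigma'\in \ker \psi_{k,\ell}$ with $\rho_{E,\ell}(\sigma')=AB^{-1}$, then $\sigma:=\sigma'\tau$ satisfies $\Psi_\ell(\sigma)=(A,u)$. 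Hence it suffices to prove $\rho_{E,\ell}(\ker \psi_{k,\ell})=\SL_2(\FF_\ell)$.

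Next, let $\calH_\ell \subseteq \kbar$ be the fixed field of $\ker \psi_{k,\ell}$; this is the ray class field of $k$ modulo $\ell \OO$, and hence Galois over $\QQ$ because the modulus $\ell\OO$ is stable under $\Gal(k/\QQ)$. By Lemma~\ref{L:cyclotomic-CFT}, $\calH(\zeta_\ell)\subseteq \calH_\ell$, so $\rho_{E,\ell}(\Gal_{\calH_\ell})\subseteq \SL_2(\FF_\ell)$. Since $\calH_\ell/\QQ$ is Galois, $\Gal_{\calH_\ell}\triangleleft \Gal_\QQ$, and the image $\rho_{E,\ell}(\Gal_{\calH_\ell})$ is a normal subgroup of $\rho_{E,\ell}(\Gal_\QQ)=\GL_2(\FF_\ell)$ contained in $\SL_2(\FF_\ell)$. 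Simplicity of $\PSL_2(\FF_\ell)$ for $\ell\geq 5$ forces this normal subgroup to be one of $\{I\}$, $\{\pm I\}$, or $\SL_2(\FF_\ell)$.

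The main obstacle is to rule out the first two options, and here is where solvability enters. If $\rho_{E,\ell}(\Gal_{\calH_\ell})\subseteq \{\pm I\}$, then the projective representation $\bar\rho\colon \Gal_\QQ \to \PGL_2(\FF_\ell)$ obtained by composing $\rho_{E,\ell}$ with reduction modulo the center is trivial on $\Gal_{\calH_\ell}$, and hence factors through $\Gal(\calH_\ell/\QQ)$. But $\Gal(\calH_\ell/\QQ)$ is solvable: it fits in an exact sequence
\[
1 \to (\OO/\ell\OO)^\times/\OO^\times \to \Gal(\calH_\ell/\QQ) \to \Gal(\calH/\QQ) \to 1,
\]
whose kernel is abelian and whose quotient is itself an extension of $\ZZ/2$ by the abelian class group $\Cl_k$. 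On the other hand, $\rho_{E,\ell}$ surjective gives $\bar\rho(\Gal_\QQ)=\PGL_2(\FF_\ell)$, which contains the nonabelian simple group $\PSL_2(\FF_\ell)$ and so is not solvable for $\ell\geq 5$. This contradiction forces $\rho_{E,\ell}(\Gal_{\calH_\ell})=\SL_2(\FF_\ell)$, completing the proof.
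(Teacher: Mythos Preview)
Your proof is correct, but it is organized differently from the paper's.  The paper argues in a Goursat-lemma style: setting $H=\Psi_\ell(\Gal_\calH)$, it first shows $\rho_{E,\ell}(\Gal_\calH)\supseteq \SL_2(\FF_\ell)$ (using only that $\calH/\QQ$ is solvable and $\SL_2(\FF_\ell)$ is perfect), then computes the commutator subgroup $H'=\SL_2(\FF_\ell)\times\{1\}$ directly, and finishes by noting that $p_2$ identifies $\calG/H'$ with $(\OO/\ell\OO)^\times/\OO^\times$, onto which $H$ already surjects.  You instead pass to the smaller subgroup $\Gal_{\calH_\ell}=\ker\psi_{k,\ell}$ and exploit an extra field-theoretic fact---that the ray class field $\calH_\ell$ is Galois over $\QQ$ because the modulus $\ell\OO$ is $\Gal(k/\QQ)$-stable---to conclude that $\rho_{E,\ell}(\Gal_{\calH_\ell})$ is \emph{normal} in $\GL_2(\FF_\ell)$, whence the classification of normal subgroups of $\SL_2(\FF_\ell)$ applies.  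Your route is a bit longer and uses more class field theory, but it makes the role of the solvable tower $\QQ\subset k\subset \calH\subset \calH_\ell$ very explicit; the paper's commutator argument is quicker and needs only that $\calH/\QQ$ is solvable, never invoking normality of $\calH_\ell$ over $\QQ$.
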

\begin{proof}
Let $p_1\colon \calG \to \GL_2(\FF_\ell) $ and $p_2\colon \calG \to (\OO/\ell\OO)^\times/\OO^\times$ be the projection maps.   Let $H$ be a subgroup of $\calG$ with $p_1(H)\supseteq\SL_2(\FF_\ell)$ and $p_2(H)=(\OO/\ell\OO)^\times/\OO^\times$.  

We claim that $H=\calG$.  For a finite group $G$, let $G'$ be the commutator subgroups of $G$.  Since $p_1(H)$ contains $\SL_2(\FF_\ell)$, we have  
\[
\SL_2(\FF_\ell)' \subseteq p_1(H)' \subseteq \GL_2(\FF_\ell)'=\SL_2(\FF_\ell).
\]
The group $\SL_2(\FF_\ell)$ is perfect since $\ell\geq 5$, so  $p_1(H')=p_1(H)' = \SL_2(\FF_\ell)$.  We have $p_1(H')=\SL_2(\FF_\ell)$ and $p_2(H')=((\OO/\ell\OO)^\times/\OO^\times)'=\{1\}$, so $H' = \SL_2(\FF_\ell) \times \{1\}$.    The group $H'$ is normal in $\calG$ and $p_2$ induces an isomorphism $\calG/H' \xrightarrow{\sim} (\OO/\ell\OO)^\times/\OO^\times$.   Since $p_2|_H$ is surjective, we deduce that the natural map $H/H'\hookrightarrow \calG/H'$ is surjective and hence $H=\calG$.

Set $H:=\Psi_\ell(\Gal_\calH)$.  We have $p_2(H)=(\OO/\ell\OO)^\times/\OO^\times$ since $\psi_{k,\ell}$ is surjective.  We have $p_1(H) = \rho_{E,\ell}(\Gal_\calH)$.   Since $\rho_{E,\ell}$ is surjective, $\calH/\QQ$ is solvable, and $\SL_2(\FF_\ell)$ is perfect, we have $\rho_{E,\ell}(\Gal_\calH) \supseteq \SL_2(\FF_\ell)$.    From our claim, we deduce that $\Psi_\ell(\Gal_\calH)=H=\calG$.
\end{proof}

\section{Proof of Theorem~\ref{T:main a}}  \label{S:main a}
Assume that GRH holds.  Fix a non-CM elliptic curve $E/\QQ$ and an integer $a$.   

For each prime $\ell$, we have constructed a representation $\rho_{E,\ell}\colon \Gal_\QQ \to \GL_2(\FF_\ell)$.    Let $I$ be the set of primes in the interval $[y, 2y]$, where $y$ is a fixed real number that satisfies $c\leq y \leq x$ for some constant $c$ depending on $E$.  We will make a more specific choice of $y$ later on.  After increasing $c$, we may assume that $I$ is non-empty and that $\rho_{E,\ell}$ is surjective for all primes $\ell\in I$.

For each prime $\ell$, define 
\[
P_{E,a}(x,\ell):= \#\{p\leq x : p\nmid N_E,\, a_p(E)=a \text{ and $\ell$ splits in $\QQ(\pi_p)$}\}.
\] 
Using our GRH assumption, Lemma~4.4 of \cite{MR935007} shows that
\begin{equation} \label{E:PEa break}
P_{E,a}(x) \ll_E \max_{\ell \in I}\, P_{E,a}(x,\ell).\\
\end{equation}

Now take any prime $\ell \in I$.  Set $L:=\QQ(E[\ell])$; it is the fixed field in $\Qbar$ of $\ker \rho_{E,\ell}$.   Using $\rho_{E,\ell}$, we may identify the Galois group $\Gal(L/\QQ)$ with $G:=\GL_2(\FF_\ell)$.   Define
\[
C:= \{A \in G : \tr(A)\equiv a \bmod{\ell} \text{ and } \tr(A)^2- 4\det(A) \in \FF_\ell \text{ is a square}\};
\]
it is a subset of $G$ that is stable under conjugation.

\begin{lemma}
We have $P_{E,k}(x,\ell) \leq \pi_C(x,L/\QQ) +1$.
\end{lemma}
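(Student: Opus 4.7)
The plan is to show that, with at most one exception (the prime $p=\ell$ itself), every prime $p$ counted by $P_{E,a}(x,\ell)$ also contributes to $\pi_C(x,L/\QQ)$; the stated inequality then follows immediately.

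First I would fix a prime $p \leq x$ counted by $P_{E,a}(x,\ell)$ with $p \neq \ell$. Since $p \nmid N_E\ell$, the representation $\rho_{E,\ell}$ is unramified at $p$, so under the identification $\Gal(L/\QQ) = G = \GL_2(\FF_\ell)$ the conjugacy class $A := \rho_{E,\ell}(\Frob_p)$ is well defined, and the standard recipe for the characteristic polynomial of Frobenius on $E[\ell]$ gives
\[
\tr(A) \equiv a_p(E) = a \pmod{\ell}, \qquad \det(A) \equiv p \pmod{\ell}.
\]

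Next I would translate the splitting hypothesis on $\QQ(\pi_p)$ into an algebraic condition on $A$. Because $\pi_p$ is a root of $x^2 - a x + p$, we have $\QQ(\pi_p) = \QQ(\sqrt{a^2 - 4p})$. After possibly enlarging the constant $c$ so that every $\ell \in I$ is odd, the prime $\ell$ splits in $\QQ(\pi_p)$ if and only if $a^2 - 4p$ is a nonzero square in $\FF_\ell$. In particular,
\[
\tr(A)^2 - 4\det(A) \equiv a^2 - 4p \pmod{\ell}
\]
is a square in $\FF_\ell$, and hence $A \in C$.

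With $p$ unramified in $L$ and $\rho_{E,\ell}(\Frob_p)\in C$, such $p$ is counted by $\pi_C(x,L/\QQ)$. The only prime counted by $P_{E,a}(x,\ell)$ that may fail to appear in $\pi_C(x,L/\QQ)$ is $p = \ell$ (which ramifies in $L$), and this accounts exactly for the $+1$ in the asserted inequality. I do not anticipate a real obstacle here: the one genuine step is the translation of the splitting of $\ell$ in $\QQ(\pi_p)$ into the square condition on the Frobenius discriminant $\tr(A)^2 - 4\det(A)$, which is a routine quadratic-reciprocity-style computation, and the remainder is bookkeeping about ramification.
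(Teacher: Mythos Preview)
Your proof is correct and follows essentially the same approach as the paper: both argue that for $p\nmid N_E\ell$ with $a_p(E)=a$ and $\ell$ split in $\QQ(\pi_p)$, the trace and determinant congruences together with the splitting criterion force $\rho_{E,\ell}(\Frob_p)\subseteq C$, with the $+1$ accounting for the excluded prime $p=\ell$. Your version adds a touch more detail (noting that $\ell$ is odd and that the discriminant is a \emph{nonzero} square), but the argument is the same.
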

\begin{proof}
Take any prime $p\nmid N_E \ell$ such that $a_p(E)=a$ and $\ell$ splits in $\QQ(\pi_p)$.  The representation $\rho_{E,\ell}$ is unramified at $p$ and we have $\tr(\rho_{E,\ell}(\Frob_p))\equiv a_p(E) = a$ and $\det(\rho_{E,\ell}(\Frob_p))\equiv p$ modulo $\ell$. 

Since $\ell$ splits in $\QQ(\pi_p)\cong \QQ((a_p(E)^2-4p)^{1/2}),$ we find that the image of $a_p(E)^2-4p$ in $\FF_\ell$ is a square.   Therefore, $\tr(\rho_{E,\ell}(\Frob_p))^2-4\det(\rho_{E,\ell}(\Frob_p)) \in \FF_\ell$ is a square.

We have thus shown that $\rho_{E,\ell}(\Frob_p) \subseteq C$.  The bound $P_{E,k}(x,\ell) \leq \pi_C(x,L/\QQ) +1$ is now clear; we have added 1 to take into account the excluded prime $p=\ell$.
\end{proof}

Let $B$ be the group of upper triangular matrices in $G$. 

\begin{lemma} \label{L:second a}
We have $P_{E,a}(x,\ell) \leq\widetilde\pi_{C\cap B}(x,L/L^B) +1$.
\end{lemma}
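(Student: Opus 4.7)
The plan is to chain the previous lemma with the functorial bound from Lemma~\ref{L:invariance C}(\ref{L:invariance C i}), after making the observation that every element of $C$ lies in a conjugate of $B$.

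The preceding lemma already gives $P_{E,a}(x,\ell)\leq \pi_C(x,L/\QQ)+1$, and the trivial inequality $\pi_C(x,L/\QQ)\leq \widetilde{\pi}_C(x,L/\QQ)$ holds because $\delta_C\geq 0$ and $\widetilde{\pi}_C$ sums over a superset of terms (including ramified primes at $m=1$ and all $m\geq 2$) with non-negative contributions. So it suffices to show
\[
\widetilde{\pi}_C(x,L/\QQ) \leq \widetilde{\pi}_{C\cap B}(x,L/L^B),
\]
which by Lemma~\ref{L:invariance C}(\ref{L:invariance C i}) applied with $H=B$ reduces to verifying that every element of $C$ is conjugate to an element of $B$.

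To check this conjugacy claim, fix $A\in C$. The defining condition says that $\tr(A)^2-4\det(A)$ is a square in $\FF_\ell$, so the characteristic polynomial $x^2-\tr(A)x+\det(A)$ of $A$ splits over $\FF_\ell$. In particular $A$ has an eigenvector in $\FF_\ell^2$, and choosing an ordered basis of $\FF_\ell^2$ starting with this eigenvector exhibits $A$ as conjugate to an upper triangular matrix, i.e.\ to an element of $B$. This is the whole content of the step; no calculation beyond this linear-algebra observation is needed.

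Putting the three inequalities together,
\[
P_{E,a}(x,\ell)\leq \pi_C(x,L/\QQ)+1 \leq \widetilde{\pi}_C(x,L/\QQ)+1 \leq \widetilde{\pi}_{C\cap B}(x,L/L^B)+1,
\]
which is the desired bound. I do not anticipate a serious obstacle; the only subtle point is remembering that $\widetilde{\pi}_C$ dominates $\pi_C$ (which uses only that $\delta_C$ takes non-negative values on inertia averages), and that Lemma~\ref{L:invariance C}(\ref{L:invariance C i}) requires exactly the splitting-over-$\FF_\ell$ condition encoded into the definition of $C$.
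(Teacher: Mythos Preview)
Your proof is correct and follows essentially the same route as the paper: observe that every element of $C$ is $G$-conjugate into $B$ (because the characteristic polynomial splits over $\FF_\ell$), then apply Lemma~\ref{L:invariance C}(\ref{L:invariance C i}) with $H=B$. You have simply made explicit the intermediate step $\pi_C(x,L/\QQ)\leq \widetilde\pi_C(x,L/\QQ)$ and the eigenvector argument, both of which the paper leaves implicit.
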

\begin{proof}
Observe that every conjugacy class of $G$ in $C$ contains an element from $B$.   Lemma~\ref{L:invariance C}(\ref{L:invariance C i})  implies that $P_{E,a}(x,\ell) \leq \widetilde\pi_{C}(x,L/\QQ)+1 \leq \widetilde\pi_{C\cap B}(x,L/L^B) +1$.
\end{proof}

 Let $U$ be the subgroup of $B$ consisting of the upper triangular matrices whose diagonal entries are both $1$.  The group $U$ is normal in $B$ and $B/U$ is abelian.  Let $C'$ be the image of $C\cap B$ in $B/U = \Gal(L^U/L^B)$.
 
\begin{lemma} \label{L:third a}
We have $P_{E,a}(x,\ell) \leq\widetilde\pi_{C'}(x,L^U/L^B) +1$.
\end{lemma}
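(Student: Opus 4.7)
The plan is to apply Lemma~\ref{L:second a} and then reduce to the abelian quotient $B/U$ via Lemma~\ref{L:invariance C}(\ref{L:invariance C ii}). By Lemma~\ref{L:second a}, it suffices to show that
\[
\widetilde\pi_{C\cap B}(x,L/L^B) = \widetilde\pi_{C'}(x,L^U/L^B),
\]
and for this we need only verify the hypothesis of Lemma~\ref{L:invariance C}(\ref{L:invariance C ii}), namely that $U\cdot(C\cap B) \subseteq C\cap B$.

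The key observation is that for an upper triangular matrix $A=\bigl(\begin{smallmatrix}\alpha & \beta \\ 0 & \delta\end{smallmatrix}\bigr)\in B$, the discriminant factors as $\tr(A)^2-4\det(A)=(\alpha-\delta)^2$, which is automatically a square in $\FF_\ell$. Thus the second defining condition of $C$ is vacuous on $B$, and $C\cap B$ is simply the set of matrices in $B$ with trace congruent to $a\bmod \ell$. Left-multiplying by an element $\bigl(\begin{smallmatrix}1 & u \\ 0 & 1\end{smallmatrix}\bigr)\in U$ sends $\bigl(\begin{smallmatrix}\alpha & \beta \\ 0 & \delta\end{smallmatrix}\bigr)$ to $\bigl(\begin{smallmatrix}\alpha & \beta+u\delta \\ 0 & \delta\end{smallmatrix}\bigr)$, which has the same diagonal and hence the same trace. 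Therefore $U(C\cap B)\subseteq C\cap B$.

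Once this inclusion is in hand, Lemma~\ref{L:invariance C}(\ref{L:invariance C ii}) applies directly with $G$ replaced by $B$ and $N$ replaced by $U$ (noting $U$ is normal in $B$, with $B/U$ identified via $\rho_{E,\ell}$ with $\Gal(L^U/L^B)$), giving the desired equality. Combining this with Lemma~\ref{L:second a} yields
\[
P_{E,a}(x,\ell)\leq \widetilde\pi_{C\cap B}(x,L/L^B)+1 = \widetilde\pi_{C'}(x,L^U/L^B)+1.
\]
There is no real obstacle here; the content of the lemma is the routine check that the defining conditions of $C$ are compatible with left $U$-translation on $B$, which is what makes the subsequent replacement of the non-abelian group $B$ by the abelian quotient $B/U$ valid for applying the Chebotarev bounds of Section~\ref{S:Chebotarev}.
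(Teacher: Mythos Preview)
Your proof is correct and follows the same approach as the paper: verify that $U\cdot(C\cap B)\subseteq C\cap B$ (the paper asserts equality without detail), invoke Lemma~\ref{L:invariance C}(\ref{L:invariance C ii}), and combine with Lemma~\ref{L:second a}. Your explicit verification that the discriminant condition is automatic on $B$ and that left $U$-translation preserves the diagonal is exactly the computation the paper leaves implicit.
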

\begin{proof}
We have $U\cdot (C\cap B) = C \cap B$.  Lemma~\ref{L:invariance C}(\ref{L:invariance C ii}) implies that $\widetilde\pi_{C\cap B}(x,L/L^B) = \widetilde\pi_{C'}(x,L^U/L^B)$.  Therefore, $P_{E,a}(x,\ell) \leq\widetilde\pi_{C'}(x,L^U/L^B) +1$ by Lemma~\ref{L:second a}.
\end{proof}

Before applying our Chebotarev bound to $\widetilde\pi_{C'}(x,L^U/L^B)$, we first bound some of the terms that will occur.

\begin{lemma} \label{L:a comps}
We have $|C'|\ll \ell$, $|C'|/|B/U| \ll 1/\ell$, $[L^B:\QQ] \ll \ell$ and $\log M(L^U/L^B) \ll_E \log \ell$.
\end{lemma}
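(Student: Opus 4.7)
The plan is to unpack each of the four quantities in turn, using the explicit description of $B$, $U$, and the image of $C\cap B$ in $B/U$. The calculations are all elementary; the only slightly substantial ingredient is the standard bound on the (root) discriminant of $\QQ(E[\ell])$ in terms of $\ell$ and $N_E$.

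First, parametrize $B$ and $U$: a matrix in $B$ has the form $\begin{pmatrix} a_1 & b \\ 0 & a_2 \end{pmatrix}$ with $a_1,a_2 \in \FF_\ell^\times$ and $b \in \FF_\ell$, so that $U$ is the subgroup with $a_1=a_2=1$, and $B/U \cong (\FF_\ell^\times)^2$ via the two diagonal entries. In particular $|B/U|=(\ell-1)^2$. Next I would compute $C\cap B$: for an upper triangular matrix, the discriminant $\tr^2-4\det$ equals $(a_1-a_2)^2$, which is automatically a square in $\FF_\ell$. Thus $C\cap B$ is the set of upper triangular matrices with $a_1+a_2 \equiv a \pmod\ell$. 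Projecting to $B/U=(\FF_\ell^\times)^2$, the image $C'$ is cut out by the single linear condition $a_1+a_2 = a$ on pairs in $(\FF_\ell^\times)^2$. This set has at most $\ell$ elements, giving $|C'|\ll \ell$ and $|C'|/|B/U|\leq \ell/(\ell-1)^2 \ll 1/\ell$, which handles the first two claims.

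For the third claim, since $L/\QQ$ is Galois with group identified with $G=\GL_2(\FF_\ell)$, one has $[L^B:\QQ]=[G:B]=\ell+1 \ll \ell$ (the Borel $B$ has index equal to the number of $\FF_\ell$-lines in $\FF_\ell^2$).

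For the fourth claim, I unpack $M(L^U/L^B)$ using its definition. The relative degree is $[L^U:L^B]=|B/U|=(\ell-1)^2$, contributing $\ll \log\ell$ to $\log M(L^U/L^B)$. For $\calP(L^U/L^B)$, since $L^U \subseteq L = \QQ(E[\ell])$ and $L$ is unramified outside the primes dividing $N_E\ell$, the same is true of $L^U/L^B$; hence $\prod_{p\in\calP(L^U/L^B)}p$ is at most $\ell\,N_E$ (up to treating primes dividing $N_E$), whose log is $\ll_E \log\ell$. The main point is to bound the root discriminant $d_{L^B}^{1/[L^B:\QQ]}$. I would invoke the standard conductor-discriminant estimate for $\QQ(E[\ell])$: since the only rational primes ramifying in $L^B \subseteq L$ are those dividing $N_E\ell$, and the ramification at such primes can be controlled (tame at primes $p\nmid \ell$ with conductor exponent bounded in terms of $E$, and a standard local estimate at $\ell$), one obtains $\log d_{L^B}^{1/[L^B:\QQ]} \ll_E \log \ell$. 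Summing the three contributions yields $\log M(L^U/L^B) \ll_E \log \ell$.

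The only step requiring any care is the root-discriminant bound in the last paragraph; everything else is direct counting. I would quote the standard estimate for the discriminant of $\QQ(E[\ell])$ from the literature (e.g.\ Serre) rather than reprove it.
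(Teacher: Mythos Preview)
Your proposal is correct and follows essentially the same approach as the paper: you identify $B/U\cong(\FF_\ell^\times)^2$, count $C'$ via the linear constraint $a_1+a_2=a$, compute $[G:B]=\ell+1$, and bound $\log M(L^U/L^B)$ by observing that ramification is supported on $N_E\ell$ and invoking Serre's root-discriminant estimate (the paper cites Proposition~4$'$ of \cite{MR644559} for exactly this). Your extra remark that the square condition on $\tr^2-4\det$ is automatic for upper triangular matrices is a nice clarification the paper leaves implicit.
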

\begin{proof}
We have $|G|=(\ell-1)^2(\ell+1)\ell \asymp \ell^4$, $|B|=(\ell-1)^2\ell \asymp \ell^3$, $|U|=\ell$ and $|B/U|=(\ell-1)\ell \asymp \ell^2$.  We have $[L^B:\QQ] = [G:B] \asymp \ell$.  The map $(\FF_\ell^\times)^2 \to B,$ $(b_1,b_2)\mapsto \big(\begin{smallmatrix} b_1 & 0\\ 0 & b_2\end{smallmatrix} \big)$ induces an isomorphism $(\FF_\ell^\times)^2 \xrightarrow{\sim} B/U$.  Therefore, 
\[
|C'|=|\{(b_1,b_2) \in \FF_\ell^2: b_1 b_2 \neq 0,\, b_1+b_2\equiv a \bmod{\ell}\}| \leq \ell.
\]
We thus have $|C'|/|B/U| \ll \ell/\ell^2=1/\ell$.  

If a prime $p$ ramifies in $L$, then $p|N_E\ell$.  By Proposition~4' of \cite{MR644559}, we have $\log(d_{L^B}^{1/[L^B:\QQ]})\ll_E \log(\ell\cdot [L^B:\QQ]) \ll \log \ell$.  Using the above bounds together, we find that $M(L^U/L^B)\ll_E \ell^2\cdot \log \ell \cdot \ell$ and hence $\log M(L^U/L^B) \ll_E \log \ell$.
\end{proof}

The AHC conjecture holds for the extension $L^U/L^B$ since its Galois group $B/U$ is abelian.  By Theorem~\ref{T:Chebotarev upper bound} and our GRH assumption, we have
\begin{align*}
\widetilde\pi_{C'}(x,L^U/L^B) & \ll \frac{|C'|}{|B/U|} \frac{x}{\log x} + |C'|^{1/2} [L^B:\QQ] \frac{x^{1/2}}{\log x} \log M(L^U/L^B)\\
& \ll_E \frac{1}{\ell} \frac{x}{\log x} + \ell^{1/2}\cdot \ell \cdot \frac{x^{1/2}}{\log x} \log \ell,
\end{align*}
where the last line uses Lemma~\ref{L:a comps}.  Lemma~\ref{L:third a} and $\ell \in [y,2y]$ implies that
\[
P_{E,a}(x,\ell) \ll_E  \frac{1}{y} \frac{x}{\log x} + y^{3/2}  \frac{x^{1/2}}{\log x} \log y.
\]
Since this holds for all $\ell\in I$, the inequality (\ref{E:PEa break}) gives
\[
P_{E,a}(x) \ll_E  \frac{1}{y} \frac{x}{\log x} + y^{3/2}  \frac{x^{1/2}}{\log x} \log y.
\]
Take $y := c' \cdot x^{1/5}/(\log x)^{2/5}$, where $c'$ is a constant chosen large enough to ensure that $y\geq c$ for all $x\geq 2$.   With this choice of $y$, we obtain the bound $P_{E,a}(x) \ll_E x^{4/5}/(\log x)^{3/5}$.\\

Finally consider the case where $a=0$.  Take any $\ell \in I$ and keep notation as above.  Let $H$ be the subgroup of $B$ consisting of the matrices whose eigenvalues are both equal; it is a normal subgroup of $B$ and we have $H\cdot (C \cap B) = C \cap B$ (multiplying a trace $0$ matrix by a scalar does not change the trace).   Lemma~\ref{L:invariance C}(\ref{L:invariance C ii}) implies that $\widetilde\pi_{C\cap B}(x,L/L^B) = \widetilde\pi_{C''}(x,L^H/L^B)$, where $C''$ is the image of $C\cap H$ in $B/H$.  Therefore, $P_{E,a}(x,\ell) \leq\widetilde\pi_{C''}(x,L^H/L^B) +1$ by Lemma~\ref{L:second a}.    Arguing as above, and using $|B/H|=\ell-1$ and $|C''|=1$, we have
\[
P_{E,0}(x)\ll \max_{\ell \in I}\, P_{E,0}(x,\ell) \ll \max_{\ell\in I}  \Big( \frac{1}{\ell} \frac{x}{\log x} + 1^{1/2}\cdot \ell \cdot \frac{x^{1/2}}{\log x} \log \ell \Big).
\]
Choosing $y \asymp x^{1/4}/(\log x)^{1/2}$, we deduce that $P_{E,0}(x) \ll  x^{3/4}/(\log x)^{1/2}$.

\section{Proof of Theorem~\ref{T:main k}}  \label{S:main k}

Fix a non-CM elliptic curve $E$ over $\QQ$ and an imaginary quadratic field $k$.  Let $\OO$ be the ring of integers of $k$. Let $\calH$ be the Hilbert class field of $k$ and let $h_k$ be the class number of $k$.    Fix a prime $\ell \geq 5$ such that $\rho_{E,\ell}$ is surjective and $\ell$ \emph{splits} in $k$; we will make a more specific choice later.\\

In \S\ref{SS:mixed}, we constructed a Galois representation
\[
\Psi_\ell\colon \Gal_{\calH} \to \calG,
\]
where $\calG:= \{ (A,u)\in\GL_2(\FF_\ell) \times \left((\OO/\ell\OO)^\times/\OO^\times \right) : \det(A)= N_{k/\QQ}(u) \}$.  The representation $\Psi_\ell$ is surjective by Lemma~\ref{L:intersection}.     Let $L$ be the fixed field in $\bbar{\calH}$ of $\ker \Psi_\ell$.   Using $\Psi_\ell$, we will identify the Galois group $\Gal(L/\calH)$ with $\calG$.  

Recall that the trace map $\Tr_{k/\QQ}\colon k\to \QQ$ induces a linear map $\Tr_{k/\QQ}\colon \OO/\ell\OO \to \FF_\ell$.   Define the set
\[
\calC := \{ (A,u) \in \calG :  \tr(A) \in \Tr_{k/\QQ}(u),\,  \tr(A)^2-4\det(A)\in \FF_\ell \text{ is a square}\};
\]
it is a subset of $\calG$ stable under conjugacy.   We now give a useful bound for $P_{E,k}(x)$.

\begin{lemma} \label{L:first bound k}
We have $P_{E,k}(x) \leq \frac{1}{h_k} \,\pi_\calC(x,L/\calH) + 4$.
\end{lemma}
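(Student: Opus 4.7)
My plan is to show that each prime $p$ counted by $P_{E,k}(x)$, with at most $4$ exceptions, contributes at least $h_k$ distinct primes $\P \in \Sigma_\calH$ with $N(\P)=p$ and $\Psi_\ell(\Frob_\P)\in \calC$; since different $p$ yield disjoint collections of such $\P$, summing will give the desired inequality. First I set aside an exceptional subset of the primes counted by $P_{E,k}(x)$, consisting of $p\in\{2,3,\ell\}$ together with any prime $p\geq 5$ at which $E$ has supersingular reduction with $\QQ(\pi_p)\cong k$. For such a supersingular $p$ one has $a_p(E)=0$ (since $|a_p(E)|<2\sqrt{p}<p$ forces $a_p(E)=0$ whenever $p\mid a_p(E)$ and $p\geq 5$), hence $\QQ(\pi_p)=\QQ(\sqrt{-p})$; the isomorphism to $k$ pins $p$ down to the squarefree part of $-d_k$, giving at most one such $p$. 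The exceptional set therefore has at most $4$ elements.

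For a non-exceptional prime $p$ counted by $P_{E,k}(x)$, Lemma~\ref{L:LT2 key}(\ref{L:LT2 key i}) gives that $p$ splits completely in $\calH$, so $\p:=\pi_p\OO$ is a prime ideal of $\OO$ and there are exactly $h_k=[\calH:k]$ primes $\P$ of $\OO_\calH$ dividing $\p$; each such $\P$ satisfies $N(\P)=p\leq x$ and is unramified in $L/\calH$ (since $\P\nmid N_E\ell$). I next check that $\Psi_\ell(\Frob_\P)=(\rho_{E,\ell}(\Frob_\P),\psi_{k,\ell}(\Frob_\P))\in\calC$ for every such $\P$. The trace condition $\tr(\rho_{E,\ell}(\Frob_\P))\in\Tr_{k/\QQ}(\psi_{k,\ell}(\Frob_\P))$ is delivered directly by Lemma~\ref{L:LT2 key}(\ref{L:LT2 key ii}), so only the ``square'' condition that $\tr(\rho_{E,\ell}(\Frob_\P))^2-4\det(\rho_{E,\ell}(\Frob_\P))$ is a square in $\FF_\ell$ remains.

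This square condition is the one genuinely delicate step. Since $p$ splits completely in $\calH$, $\rho_{E,\ell}(\Frob_\P)$ has characteristic polynomial $x^2-a_p(E)x+p\bmod \ell$, reducing the task to showing that $a_p(E)^2-4p$ is a square in $\FF_\ell$. I write $a_p(E)^2-4p=d_k\, c^2$ for some $c\in\ZZ$, which is legitimate because $k=\QQ(\pi_p)=\QQ(\sqrt{a_p(E)^2-4p})$; invoking the standing hypothesis that $\ell$ splits in $k$, $d_k$ is a nonzero square modulo $\ell$, so $a_p(E)^2-4p\equiv d_k c^2\pmod{\ell}$ is a square in $\FF_\ell$ (the case $\ell\mid c$ gives $0$, which still counts). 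Assembling the pieces, every non-exceptional $p$ contributes $h_k$ distinct primes to the count $\pi_\calC(x,L/\calH)$, and these contributions are disjoint across $p$ since $N(\P)=p$; we conclude $h_k(P_{E,k}(x)-4)\leq \pi_\calC(x,L/\calH)$, which is the stated bound. The main obstacle—essentially the only input not already packaged in Lemma~\ref{L:LT2 key}—is the square-residue verification, and this is precisely where the assumption ``$\ell$ splits in $k$'' is used.
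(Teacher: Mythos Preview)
Your argument is correct and matches the paper's proof essentially line for line: the same exceptional set (the prime $\ell$ together with the at most three supersingular primes $p$ with $\QQ(\pi_p)\cong k$, handled via $a_p(E)=0$ for $p\geq 5$), the same appeal to Lemma~\ref{L:LT2 key} for splitting in $\calH$ and the trace condition, and the same use of ``$\ell$ splits in $k$'' to force $a_p(E)^2-4p$ to be a square in $\FF_\ell$. The paper phrases this last step as ``$x^2-a_p(E)x+p$ factors modulo $\ell$'' rather than your discriminant identity $a_p(E)^2-4p=(\text{disc}\,k)\cdot c^2$, but the content is identical.
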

\begin{proof}
Take any prime $p\nmid  N_E\ell$ for which $E$ has ordinary reduction at $p$ and for which $k\cong \QQ(\pi_p)$.  By Lemma~\ref{L:LT2 key}, the prime $p$ splits completely in $\calH$.   Let $\P \in \Sigma_\calH$ be any of the $h_k$ primes that divide $\pi_\p \OO$.  By Lemma~\ref{L:LT2 key}, we have $\tr(\rho_{E,\ell}(\Frob_\P))\in \Tr_{k/\QQ}(\psi_{k,\ell}(\Frob_\P))$.

Since $k\cong \QQ(\pi_p)$ and $\ell$ splits in $k$, the polynomial $x^2-a_p(E)x +p$ will factor modulo $\ell$.  Therefore, the image of $a_p(E)^2- 4p$ in $\FF_\ell$ is a square.  Since $p$ splits completely in $\calH$, we have $\tr(\rho_{E,\ell}(\Frob_\P))=\tr(\rho_{E,\ell}(\Frob_p))$ and $\det(\rho_{E,\ell}(\Frob_\P))=\det(\rho_{E,\ell}(\Frob_p))$.  Therefore, $\tr(\rho_{E,\ell}(\Frob_\P))^2- 4\det(\rho_{E,\ell}(\Frob_\P)) \equiv a_p(E)^2-4p \pmod{\ell}$ is a square.

We have verified that $\Psi_\ell(\Frob_\P) \subseteq \calC$ for each of the $h_k$ primes $\P$ dividing $\pi_p\OO$.   So the set
\[
\{p\leq x: p\nmid N_E,\, \QQ(\pi_p)\cong k \} - (S\cup\{\ell\})
\]
has cardinality at most $\frac{1}{h_k} \pi_C(x,L/\calH)$, where $S$ is the set of primes $p\nmid N_E$ for which $E$ has supersingular reduction at $p$ and $\QQ(\pi_p)\cong k$.  It thus suffices to show that $|S|\leq 3$.   

Take any prime $p\in S$ with $p\geq 5$.   Since $E$ has supersingular reduction at $p\geq 5$, we have $a_p(E)=0$.   Therefore, $k$ is isomorphic to $\QQ(\pi_p)\cong \QQ(\sqrt{-p})$.    So if $p\in S$, then $p$ is $2$, $3$ or the unique prime (if it exists) such that $k\cong \QQ(\sqrt{-p})$.  Therefore, $|S|\leq 3$.
\end{proof}

Let $B$ be the group of upper triangular matrices in $\GL_2(\FF_\ell)$.   Define
\[
\Bb := \{ (A,u) \in \calG : A \in B \};
\]
it is a subgroup of $\calG$.  We can identify $\Bb$ with the Galois group $\Gal(L/L^\Bb)$.

\begin{lemma} \label{L:second bound k}
We have $P_{E,k}(x) \leq \frac{1}{h_k} \,\widetilde\pi_{\calC\cap \Bb}(x,L/L^\Bb) + 4$.
\end{lemma}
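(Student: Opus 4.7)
The plan is to reduce the bound from Lemma~\ref{L:first bound k} to a sum over the smaller subgroup $\Bb \subseteq \calG$ by invoking Lemma~\ref{L:invariance C}(\ref{L:invariance C i}). The two ingredients I will need are: (i) that $\widetilde\pi_\calC$ is an upper bound for $\pi_\calC$ (all the extra terms, those coming from ramified primes at $m=1$ and from the powers $m \geq 2$, contribute non-negatively); and (ii) that every element of $\calC$ is $\calG$-conjugate to some element of $\Bb$, which is the hypothesis required by Lemma~\ref{L:invariance C}(\ref{L:invariance C i}).

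Concretely, I would first combine Lemma~\ref{L:first bound k} with the inequality $\pi_\calC(x,L/\calH) \leq \widetilde\pi_\calC(x,L/\calH)$ to get
\[
P_{E,k}(x) \leq \frac{1}{h_k}\,\widetilde\pi_\calC(x,L/\calH) + 4,
\]
and then apply Lemma~\ref{L:invariance C}(\ref{L:invariance C i}) with $G=\calG$, $H=\Bb$, $C=\calC$ to replace $\widetilde\pi_\calC(x,L/\calH)$ by $\widetilde\pi_{\calC\cap\Bb}(x,L/L^\Bb)$.

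The only step requiring work is verifying the conjugacy hypothesis. Take any $(A,u)\in\calC$. The condition that $\tr(A)^2-4\det(A)$ is a square in $\FF_\ell$ means that the characteristic polynomial of $A$ splits over $\FF_\ell$, so $A$ is $\GL_2(\FF_\ell)$-conjugate to an upper triangular matrix $A'\in B$; pick $X\in\GL_2(\FF_\ell)$ with $XAX^{-1}=A'$. To realize this conjugation inside $\calG$, I need $v\in(\OO/\ell\OO)^\times/\OO^\times$ with $N_{k/\QQ}(v)=\det X$. Since $\ell$ splits in $k$ (an assumption imposed at the start of Section~\ref{S:main k}), we have $\OO/\ell\OO\cong\FF_\ell\times\FF_\ell$ and the induced norm map $(\OO/\ell\OO)^\times/\OO^\times\to\FF_\ell^\times$, $(a,b)\mapsto ab$, is surjective; such a $v$ therefore exists. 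Because the second factor of $\calG$ is abelian, conjugating $(A,u)$ by $(X,v)\in\calG$ yields $(XAX^{-1},u)=(A',u)\in\Bb$, and this element still lies in $\calC$ since $\tr$, $\det$ and $u$ are all preserved.

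I do not expect a genuine obstacle here; once Lemma~\ref{L:first bound k} is in place, the argument is formal. The only substantive point is the appeal to the splitting of $\ell$ in $k$ — without it, the norm map could fail to be surjective (for example, when $\ell$ ramifies) and the $\GL_2(\FF_\ell)$-conjugacy might not be realizable inside $\calG$. This is precisely why the restriction to split $\ell$ was built into the setup of Section~\ref{S:main k}.
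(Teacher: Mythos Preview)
Your proof is correct and follows the same route as the paper: combine Lemma~\ref{L:first bound k} with the trivial inequality $\pi_\calC\leq\widetilde\pi_\calC$, then apply Lemma~\ref{L:invariance C}(\ref{L:invariance C i}) after checking that every element of $\calC$ is $\calG$-conjugate into $\Bb$. One small remark: your appeal to the splitting of $\ell$ to lift the conjugation into $\calG$ is not actually needed here---given any $X\in\GL_2(\FF_\ell)$ with $XAX^{-1}\in B$, the element $X':=\operatorname{diag}(\det X,1)^{-1}X$ lies in $\SL_2(\FF_\ell)$ and still conjugates $A$ into $B$, so $(X',1)\in\calG$ does the job regardless of how $\ell$ behaves in $k$.
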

\begin{proof}
 Any matrix $A\in \GL_2(\FF_\ell)$ with $\tr(A)^2-4\det(A)\in \FF_\ell$ a square is conjugate to a matrix in $B$.  Therefore, every element of $\calC$ is conjugate in $\calG$ to some element of $\Bb$.  By Lemma~\ref{L:invariance C}(\ref{L:invariance C i}), we have $\widetilde{\pi}_\calC(x,L/\calH) \leq \widetilde{\pi}_{C\cap \Bb}(x,L/L^\calB)$.  The lemma now follows from Lemma~\ref{L:first bound k} and the easy bound $\pi_\calC(x,L/\calH)\leq \widetilde\pi_\calC(x,L/\calH)$.
\end{proof}

Let $\calU$ be the image of the group 
\[
\{(A,a)\in \GL_2(\FF_\ell) \times \FF_\ell^\times: \text{the eigenvalues of $A$ are both $a$}\}
\]
in $\calG$ (we can identify $\FF_\ell^\times$ with a subgroup of $(\OO/\ell\OO)^\times$ since $\FF_\ell$ is a subalgebra of $\OO/\ell\OO$).  The group $\calU$ is normal in $\Bb$ and $\Bb/\calU$ is an abelian group.   We can identify $\Bb/\calU$ with the Galois group $\Gal(L^\calU/L^\Bb)$.  Let $\calC'$ be the image of $\calC\cap \Bb$ under the homomorphism $\Bb\to \Bb/\calU$; it is stable under conjugacy in $\Bb/\calU$.

\begin{lemma} \label{L:third bound k}
We have $P_{E,k}(x) \leq \frac{1}{h_k} \,\widetilde\pi_{\calC'}(x,L^\calU/L^\Bb) + 4$.
\end{lemma}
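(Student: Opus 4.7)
The plan is to derive the claim directly from Lemma~\ref{L:second bound k} by applying Lemma~\ref{L:invariance C}(\ref{L:invariance C ii}) to the extension $L/L^\Bb$, whose Galois group is $\Bb$, with normal subgroup $\calU$ and conjugacy-stable subset $\calC\cap\Bb$.   That lemma then yields $\widetilde\pi_{\calC\cap\Bb}(x,L/L^\Bb) = \widetilde\pi_{\calC'}(x,L^\calU/L^\Bb)$, which substituted into Lemma~\ref{L:second bound k} produces the stated bound.   The only hypothesis to verify is the stability condition $\calU\cdot(\calC\cap\Bb)\subseteq \calC\cap\Bb$.

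To verify stability, I would describe $\calU$ concretely: its elements are pairs $(A,a)\in\Bb$ with $a\in\FF_\ell^\times$ (viewed in $(\OO/\ell\OO)^\times/\OO^\times$ via the given embedding) and $A = aI + N \in B$, where $N$ is strictly upper triangular nilpotent.   Given $(A_0,u_0)\in\calC\cap\Bb$ and $(A,a)\in\calU$, the product $(AA_0,au_0)$ lies in $\Bb$ since $B$ is a subgroup; the useful identities are $\tr(AA_0) = a\,\tr(A_0)$ (because $NA_0$ is strictly upper triangular and so has trace zero) and $\det(AA_0) = a^2\det(A_0)$.

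Both defining conditions of $\calC$ then transfer.   For the trace condition, since $a\in\FF_\ell$ and the induced trace map $\Tr_{k/\QQ}\colon \OO/\ell\OO \to \FF_\ell$ is $\FF_\ell$-linear, we have $\Tr_{k/\QQ}(au_0) = a\cdot\Tr_{k/\QQ}(u_0)$ as subsets of $\FF_\ell$, so $\tr(A_0)\in\Tr_{k/\QQ}(u_0)$ implies $\tr(AA_0)\in\Tr_{k/\QQ}(au_0)$.   For the square condition, $\tr(AA_0)^2-4\det(AA_0) = a^2\bigl(\tr(A_0)^2-4\det(A_0)\bigr)$ is a square in $\FF_\ell$ if and only if $\tr(A_0)^2-4\det(A_0)$ is.   The argument is largely routine; the mild obstacle is essentially book-keeping, making sure the scalar $a$ can be factored out consistently from both the matrix trace and the trace coming from the second coordinate of $\calG$, which is precisely what the $\FF_\ell$-linearity of $\Tr_{k/\QQ}$ together with the identity $N^2 = 0$ provide.
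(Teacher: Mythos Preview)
Your proof is correct and follows essentially the same approach as the paper: both apply Lemma~\ref{L:invariance C}(\ref{L:invariance C ii}) after checking $\calU\cdot(\calC\cap\Bb)=\calC\cap\Bb$, and then invoke Lemma~\ref{L:second bound k}. Your verification of the stability condition is simply a more explicit version of the paper's one-line observation that membership in $\calC$ depends only on the eigenvalues of $A$ and on $u$, and is preserved under multiplying both by a common scalar in $\FF_\ell^\times$.
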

\begin{proof}
Observe that $\calU\cdot (\calC\cap \Bb) = \calC \cap \Bb$; whether an element $(A,u) \in \Bb$ belongs to $\calC$ depends only on $u$ and the eigenvalues of $A$, and that $\tr(A)\in \Tr_{k/\QQ}(u)$ remains true if $A$ and $u$ are multiplied by a common scalar in $\FF_\ell^\times$.  Lemma~\ref{L:invariance C}(\ref{L:invariance C ii}) implies that $\widetilde{\pi}_{\calC\cap \Bb}(x,L/L^\Bb) = \widetilde{\pi}_{C'}(x,L^\calU/L^\Bb)$.   The lemma then follows from Lemma~\ref{L:second bound k}.   
\end{proof}

Since $L^\calU/L^\calB$ is an abelian extension, we can now apply our Chebotarev bounds to obtain bounds for $P_{E,k}(x)$.   We first bound some terms that will show up.

\begin{lemma} \label{L:specific discriminant bounds}
\begin{romanenum}
\item
We have $|\calG| \asymp \ell^5$, $|\Bb| \asymp \ell^4$ and $|\calU|\asymp \ell^2$.
\item 
We have $|\calC'|\ll \ell$ and $|\calC'|/|\Bb/\calU| \ll  1/\ell$.  
\item
We have $[L^\Bb: \QQ] \ll h_k \ell$.
\item
We have $\log M(L^\calU/L^\Bb) \ll_E \log (d_k\ell)$.
\end{romanenum}
\end{lemma}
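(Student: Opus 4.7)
The plan is to exploit throughout the assumption that $\ell$ splits in $k$, so that $\OO/\ell\OO\cong \FF_\ell\times \FF_\ell$ and the norm map becomes $(a,b)\mapsto ab$. This makes the groups $\calG$, $\Bb$, and $\calU$ completely explicit. For part (i), I would first observe that the norm map $N_{k/\QQ}\colon (\OO/\ell\OO)^\times/\OO^\times\to \FF_\ell^\times$ is surjective with kernel of order $(\ell-1)/|\OO^\times|$. Since $\calG\to \GL_2(\FF_\ell)$ is surjective with fibers of this size, $|\calG|=|\GL_2(\FF_\ell)|\cdot (\ell-1)/|\OO^\times|\asymp \ell^5$, and restricting to $A\in B$ gives $|\Bb|\asymp \ell^4$. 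For $\calU$, I would identify its underlying set with the pairs $(A,[a])\in \Bb$ for which $A$ has both diagonal entries equal to $a$; the projection $(A,[a])\mapsto A$ is a bijection with the subgroup $\left\{\left(\begin{smallmatrix}a & b\\ 0 & a\end{smallmatrix}\right) : a\in \FF_\ell^\times,\, b\in \FF_\ell\right\}$ of $B$, so $|\calU|=(\ell-1)\ell\asymp \ell^2$.

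The key step is part (ii). I would first observe that $\calC\cap \Bb$ is a union of $\calU$-cosets: multiplying $(A,[u])$ by a central element $(cI,[c])\in\calU$ scales both $\tr(A)$ and $\Tr_{k/\QQ}(u)$ by $c$ (using $\FF_\ell$-linearity of $\Tr_{k/\QQ}$), while multiplying by a unipotent element of $\calU$ preserves the trace. Hence $|\calC'|=|\calC\cap \Bb|/|\calU|$. I would then count $|\calC\cap \Bb|$: for each of the $(\ell-1)^2/|\OO^\times|$ choices of $[u]$, each of the at most $|\OO^\times|$ elements $t\in \Tr_{k/\QQ}(u)$, the diagonal entries of $A$ are determined up to order as the roots of $x^2-tx+N_{k/\QQ}(u)$ (at most $2$ ordered pairs), and the upper-right entry is free ($\ell$ choices). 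Thus $|\calC\cap \Bb|\leq 2\ell(\ell-1)^2$, giving $|\calC'|\leq 2(\ell-1)\ll \ell$ and $|\calC'|/|\Bb/\calU|\ll 1/\ell$.

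Parts (iii) and (iv) are more routine. Part (iii) is Galois theory: $[L^\Bb:\QQ]=[\calG:\Bb]\cdot [\calH:\QQ]=[\calG:\Bb]\cdot h_k$ with $[\calG:\Bb]\asymp \ell$ from (i). For part (iv), I would argue that every rational prime ramifying in $L^\calU$ divides $N_E\ell d_k$: the representation $\rho_{E,\ell}$ is unramified outside $N_E\ell$, the ray class field of conductor $\ell\OO$ through which $\psi_{k,\ell}$ factors is unramified over $\calH$ away from $\ell$, and $\calH/\QQ$ is ramified only at primes dividing $d_k$. This yields $\sum_{p\in \calP(L^\calU/L^\Bb)}\log p\ll_E \log(d_k\ell)$. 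Combining with the root-discriminant estimate from Proposition~4$'$ of~\cite{MR644559} applied to $L^\Bb$ (using $h_k\ll_\varepsilon d_k^{1/2+\varepsilon}$ to control the degree) gives $\log(d_{L^\Bb}^{1/[L^\Bb:\QQ]})\ll_E \log(d_k\ell)$, and adding $\log[L^\calU:L^\Bb]\ll \log\ell$ yields the claimed bound on $\log M(L^\calU/L^\Bb)$. The main obstacle is part (ii): without the $\calU$-invariance observation, the count would be off by a factor of $\ell$, since $|\calC\cap \Bb|\asymp \ell^3$ while we need $|\calC'|\ll \ell$.
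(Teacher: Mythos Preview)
Your argument is correct and follows essentially the same approach as the paper; in particular, your counting in (ii) and the paper's are the same up to the order in which $A$ and $u$ are fixed, both yielding $|\calC\cap\Bb|\leq 2|B|$ and hence $|\calC'|\ll\ell$. Two small remarks: in (iii) you should have $[\calH:\QQ]=2h_k$ rather than $h_k$, which of course does not affect the $\ll$ bound; and in (iv) the paper takes a slightly cleaner route by applying Proposition~4 of \cite{MR644559} to the tower $L^\Bb/\calH$ and then using that $\calH/k$ is unramified to get $d_\calH^{1/[\calH:\QQ]}=d_k^{1/2}$ exactly, thereby avoiding any appeal to a comparison between $h_k$ and $d_k$.
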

\begin{proof}
Since $\ell$ splits in $k$, we have $(\OO/\ell\OO)^\times \cong \FF_\ell^\times \times \FF_\ell^\times$.  Therefore,
\[
|\calC\cap \Bb| \leq |\{ (A,b,c) \in B \times \FF_\ell^\times \times \FF_\ell^\times : \det(A)=bc \text{ and } \tr(A)=b+c \}| \leq 2 |B|,
\]
where the last inequality uses that $x^2-\tr(A)x +\det(A)$ has at most two roots $b,c\in \FF_\ell$.   We thus have $|\calC \cap \Bb| \leq 2 \ell^3$.  Since $\calU\cdot (\calC\cap \Bb) = \calC \cap \Bb$, we have $|\calC'| = |\calC \cap \Bb|/|\calU| \leq 2\ell^3/|\calU| \ll \ell$.   We have $|\calC'|/|\Bb/\calU| \ll  1/\ell$ since $|\Bb/\calU| \asymp \ell^2$.  We have $[L^\Bb:\QQ]=[\calH:\QQ][L^\Bb:\calH]= 2h_k\cdot [\calG:\Bb]$, so $[L^\Bb:\QQ] \ll h_k \ell$.

Let $\calP$ be the set of rational primes $p$ divisible by some $\P\in \Sigma_\calH$ that ramifies in $L$.    Each prime in $\calP$ divides $N_E \ell$.    We have $[\Bb:\calU]\ll \ell^2$, so
\[
\log M(L^\calU/L^\Bb) \leq \log\big([\Bb:\calU] d_{L^\Bb}^{1/[L^\Bb:\QQ]}\cdot N_E \ell) \ll_E [L^\Bb:\QQ]^{-1} \log(d_{L^\Bb}) + \log \ell.
\]
It thus suffices to prove that $[L^\Bb:\QQ]^{-1} \log(d_{L^\Bb}) \ll_E \log(d_k\ell)$.  By Proposition~4 of \cite{MR644559}, we have
\[
[L^\Bb:\QQ]^{-1} \log(d_{L^\Bb}) \leq [\calH:\QQ]^{-1} \log(d_\calH) +  {\sum}_{p\in \calP} \log p + |\calP| \log([L^\Bb:\calH]).
\]
We have $[L^\Bb:\calH]\ll \ell$, so $[L^\Bb:\QQ]^{-1} \log(d_{L^\Bb}) \ll_E [\calH:\QQ]^{-1} \log(d_\calH) + \log \ell$.  Since $\calH/k$ is unramified,  Proposition~4 of \cite{MR644559} implies that $[\calH:\QQ]^{-1} \log(d_\calH) = 2^{-1} \log d_k$ and hence $[L^\Bb:\QQ]^{-1} \log(d_{L^\Bb}) \ll_E \log(d_k\ell)$.
\end{proof}

\begin{lemma} \label{L:easy 0}
If $d_k > 4x$, then $P_{E,k}(x)=0$.
\end{lemma}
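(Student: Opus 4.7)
The plan is to show that if a prime $p \leq x$ is counted by $P_{E,k}(x)$, then $d_k \leq 4p$, which contradicts $d_k > 4x$.

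First I would take any prime $p \leq x$ with $p \nmid N_E$ and $\QQ(\pi_p) \cong k$. The Frobenius endomorphism $\pi_p$ is an algebraic integer satisfying $\pi_p^2 - a_p(E)\pi_p + p = 0$, and since $|a_p(E)| < 2\sqrt{p}$ the discriminant $a_p(E)^2 - 4p$ is strictly negative, so this is the minimal polynomial of $\pi_p$ over $\QQ$. In particular, $\pi_p \in \OO_k$ and the ring $\ZZ[\pi_p]$ is an order in $\OO_k$ of discriminant $a_p(E)^2 - 4p$.

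Next I would use the standard relation between the discriminants of an order and its maximal order: setting $f := [\OO_k : \ZZ[\pi_p]]$, we have (with appropriate sign conventions) $|a_p(E)^2 - 4p| = f^2 \cdot d_k$. Since $f \geq 1$, this gives
\[
d_k \;\leq\; 4p - a_p(E)^2 \;\leq\; 4p \;\leq\; 4x.
\]
The contrapositive of this implication is precisely the statement of the lemma: if $d_k > 4x$, then no prime $p \leq x$ can satisfy $\QQ(\pi_p) \cong k$, so $P_{E,k}(x) = 0$.

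There is no real obstacle here; the only point to be a bit careful about is the sign/absolute-value convention, since $d_k$ denotes the \emph{absolute} discriminant in this paper, while the order discriminant $a_p(E)^2 - 4p$ is intrinsically negative.
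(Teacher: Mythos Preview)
Your proof is correct and follows essentially the same approach as the paper: both argue by contradiction that if some prime $p\leq x$ has $\QQ(\pi_p)\cong k$, then $d_k$ divides $4p-a_p(E)^2$, hence $d_k\leq 4x$. The only cosmetic difference is that the paper obtains this divisibility by matching squarefree parts and checking the prime~$2$ directly, whereas you invoke the order--discriminant relation $4p-a_p(E)^2=f^2 d_k$; these are equivalent justifications of the same fact.
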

\begin{proof}
Suppose that $d_k > 4x$ and $P_{E,k}(x)>0$.  There is thus a prime $p\nmid N_E$ satisfying $p\leq x$ and $k\cong \QQ(\pi_p)$.  We have $\QQ(\sqrt{a_p(E)^2-4p})\cong \QQ(\sqrt{-d_k})$, and hence $d_k$ divides $a_p(E)^2-4p$ (the divisibility with respect to the prime $2$ uses that $a_p(E)^2-4p$ is congruent to $0$ or $1$ modulo $4$).  Therefore, $d_k \leq 4p-a_p(E)^2 \leq 4x$ which contradicts our assumption.
\end{proof}

By Lemma~\ref{L:easy 0}, we may assume that $d_k\leq 4x$; the desired bounds are trivial otherwise.

\subsection{Conditional bounds}

Assume that GRH holds.  

By Lemma~\ref{L:third bound k}, Theorem~\ref{T:Chebotarev upper bound} and Lemma~\ref{L:good approximation}, we have
\begin{align*}
P_{E,k}(x) &\leq \frac{1}{h_k} \widetilde{\pi}_{\calC'}(x,L^\calU/L^\Bb) + 4\\
& \ll \frac{1}{h_k} \Big( \frac{|\calC'|}{|\Bb/\calU|} \frac{x}{\log x} + |\calC'|^{1/2}\, [L^\Bb:\QQ] \frac{x^{1/2}}{\log x} \, \log M(L^\calU/L^\Bb) \Big)+4\\
& \ll_E   \frac{1}{h_k} \frac{1}{\ell} \frac{x}{\log x} + \ell^{3/2}\, \frac{x^{1/2}}{\log x} \log(d_k \ell).
\end{align*}
Using Lemma~\ref{L:specific discriminant bounds} and $d_k\leq 4x$, we find that
\[
P_{E,k}(x) \ll_E \frac{1}{h_k} \frac{1}{\ell} \frac{x}{\log x} + \ell^{3/2}\, \frac{x^{1/2}}{\log x} \log(x\ell).
\]
We still need to choose our prime $\ell$.

\begin{lemma} \label{L:GRH for split}
Assuming GRH, there is an absolute constant $\gamma>0$ such that if $y \geq \gamma (\log d_k)^2$, then there exists a prime $\ell$ in the interval $[y,2y]$ that splits completely in $k$.
\end{lemma}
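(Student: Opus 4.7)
The plan is to apply effective Chebotarev directly to the quadratic extension $k/\QQ$. Since $\Gal(k/\QQ) \cong \ZZ/2\ZZ$ is abelian, AHC is automatic for $k/\QQ$, and by our GRH assumption the Dedekind zeta function of $k$ satisfies GRH, so the conditional Chebotarev statements of \S\ref{S:Chebotarev} are all available.

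Concretely, I would take $K=\QQ$, $L=k$, $G=\Gal(k/\QQ)$, and $C=\{1\}\subset G$. A rational prime $\ell$ unramified in $k$ splits completely in $k$ if and only if $\Frob_\ell$ is trivial in $G$, i.e.\ $\delta_C(\Frob_\ell)=1$. Under the bijection $p\mapsto p\ZZ$ between rational primes and $\Sigma_\QQ$, producing such a prime in $[y,2y]$ is equivalent to producing a prime $\p\in\Sigma_\QQ$ unramified in $k$ with $y\le N(\p)\le 2y$ and $\delta_C(\Frob_\p)=1$.

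Next I would estimate $M(k/\QQ)$. Every rational prime in $\calP(k/\QQ)$ ramifies in $k$ and hence divides $d_k$, so $\prod_{p\in\calP(k/\QQ)} p\le |d_k|$; combined with $[k:\QQ]=2$ and $d_\QQ=1$, this yields
\[
M(k/\QQ)\le 4\,|d_k|,\qquad \log M(k/\QQ)\ll \log d_k.
\]
Now I would invoke Corollary~\ref{C:cheb existence} with $x:=2y$. The hypothesis of that corollary reads
\[
x\ \ge\ c\,\frac{|G|^2}{|C|}\,[K:\QQ]^2\,\log^2 M(L/K)\ =\ 4c\,\log^2 M(k/\QQ),
\]
which, by the bound above, is implied by $2y\ge 4c(\log(4|d_k|))^2$, and hence by $y\ge \gamma(\log d_k)^2$ for a sufficiently large absolute constant $\gamma>0$. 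The conclusion of the corollary then furnishes a prime $\p\in\Sigma_\QQ$ unramified in $k$ with $y\le N(\p)\le 2y$ and $\delta_C(\Frob_\p)=1$; equivalently, a rational prime $\ell\in[y,2y]$ splitting completely in $k$.

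The main (only) point requiring any care is the discriminant bound $\log M(k/\QQ)\ll \log d_k$; everything else is a direct unpacking of Corollary~\ref{C:cheb existence}. Alternatively, one could replace the appeal to Corollary~\ref{C:cheb existence} by the $|C|=1$, abelian case of Theorem~\ref{T:effective}(\ref{T:effective i}): the main term $\tfrac12\Li(2y)-\tfrac12\Li(y)\gg y/\log y$ dominates the error $O\bigl(y^{1/2}\log(M(k/\QQ)\cdot y)\bigr)$ precisely when $y^{1/2}/\log y\gg \log(d_k y)$, which is comfortably satisfied for $y\ge\gamma(\log d_k)^2$ with $\gamma$ absolute and large.
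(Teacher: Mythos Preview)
Your main argument is correct and is exactly the paper's proof: the paper simply says ``This follows from Corollary~\ref{C:cheb existence} with the extension $k/\QQ$,'' and you have unpacked precisely that application, including the bound $\log M(k/\QQ)\ll\log d_k$.

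One caution about your closing remark: the alternative via Theorem~\ref{T:effective}(\ref{T:effective i}) does \emph{not} work at the threshold $y\asymp(\log d_k)^2$. With $y=\gamma(\log d_k)^2$ one has $y^{1/2}/\log y\asymp(\log d_k)/\log\log d_k$, whereas $\log(d_k y)\asymp\log d_k$, so the main term does not dominate the error term uniformly in $d_k$. The point of the smoothed Corollary~\ref{C:cheb existence} is precisely that it removes the extra $\log x$ in the error, which is what makes the $(\log d_k)^2$ threshold attainable.
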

\begin{proof} 
This follows from Corollary~\ref{C:cheb existence} with the extension $k/\QQ$.
\end{proof}

Define 
\[
y:= \begin{cases}
C \cdot h_k^{-2/5} \cdot {x^{1/5}}/{(\log x)^{2/5}} & \text{if $h_k \leq x^{1/2}/(\log x)^6$},\\
C \cdot (\log x)^2 & \text{otherwise},
\end{cases}
\]
where $C>0$ is some constant depending only on $E$.  In both cases, we have $y \geq C (\log x)^2$.  

Since $d_k \leq 4x$, we have, after possibly increasing $C$, that $y\geq \gamma(\log d_k)^2$ with $\gamma$ as in Lemma~\ref{L:GRH for split}.  By Lemma~\ref{L:GRH for split}, there is a prime $\ell \in [y,2y]$ that splits completely in $k$.  After possibly increasing the constant $C$ first, we may assume by Theorem~\ref{T:Serre} that $\rho_{E,\ell}$ is surjective and that $\ell \geq 5$.   With this prime $\ell$, we obtain the bound
\[
P_{E,k}(x) \ll_E \frac{1}{h_k} \frac{1}{\ell} \frac{x}{\log x} + \ell^{3/2}\, \frac{x^{1/2}}{\log x} \log(x\ell) \ll \frac{1}{h_k} \frac{1}{y} \frac{x}{\log x} + y^{3/2}\, \frac{x^{1/2}}{\log x} \log(xy).
\]
Since $y\ll_E x$, we have 
\begin{align} \label{E:balance}
P_{E,k}(x) \ll_E \frac{1}{h_k} \frac{1}{y} \frac{x}{\log x} + y^{3/2}\, x^{1/2}.
\end{align}
If $h_k \leq x^{1/2}/(\log x)^6$, and hence $y=Ch_k^{-2/5}  {x^{1/5}}/{(\log x)^{2/5}}$,  substituting for $y$ gives the bound 
\[
P_{E,k}(x) \ll_E h_k^{-3/5} x^{4/5}/(\log x)^{3/5};
\]
note that our $y$ was chosen so that both terms in (\ref{E:balance}) have the same magnitude.   In the case $h_k > x^{1/2}/(\log x)^6$, we obtain
\[
P_{E,k}(x) \ll_E \frac{1}{h_k}\frac{x}{(\log x)^3} + x^{1/2}(\log x)^{3} \leq 2 x^{1/2} (\log x)^3.
\]
The bound of Theorem~\ref{T:main k}(\ref{T:main k i}) follows by adding our two possible bounds for $P_{E,k}(x)$.

\subsection{Unconditional bounds}

Define 
\[
\displaystyle y:= C \frac{1}{h_k} \frac{\log x}{(\log \log x)^2},
\] 
where $C>0$ is a constant depending only on $E$ to be chosen later.  Suppose that there is a prime $\ell$ in the interval $[y,2y]$ such that $\ell$ splits in $k$, $\ell\geq 5$, and $\rho_{E,\ell}$ is surjective.    

The group $\calB/\calU = \Gal(L^\calU/L^\calB)$ is abelian.   By Theorem~\ref{T:effective}(\ref{T:effective ii}), there are absolute constants $b, c>0$ such that if $\log x \geq b [L^\calB:\QQ]  \log^2 M(L^\calU/L^\calB)$, then
\[
\pi_{\calC'}(x,L^\calU/L^\calB) \ll  \frac{|\calC'|}{|\calB/\calU|}\,  \frac{x}{\log x} + |\calC'|^{1/2}\, [L^\calB:\QQ]x \exp\bigg( - 
\frac{c(\log x)^{1/2}}{[L^\calB:\QQ]^{1/2}} \bigg) \log^2 (M(L^\calU/L^\calB)  x),
\]
where we have used that $\beta_{L^\calU}\leq 1$ if it exists.

By Lemma~\ref{L:specific discriminant bounds}, we have 
\[
[L^\calB:\QQ]  \log^2 M(L^\calU/L^\calB) \ll h_k \ell \log^2(d_k \ell) \ll h_k \ell \log^2(h_k \ell),
\]
where the last inequality use the Brauer-Seigel theorem.   Using Lemma~\ref{L:specific discriminant bounds}, we deduce that there are positive absolute constants $b'$ and $c'$ such that if $\log x \geq b' \cdot h_k \ell\cdot  \log^2(h_k \ell)$, then
\[
\pi_{\calC'}(x,L^\calU/L^\Bb) \ll_E  \frac{1}{\ell}\,  \frac{x}{\log x} + h_k \ell^{3/2} x \exp\bigg( - c' \sqrt{\frac{\log x}{h_k\ell}}\bigg)\log^2 (h_k\ell \cdot x).
\]
Using that $\ell \in [y,2y]$, we have
\[
\pi_{\calC'}(x,L^\calU/L^\Bb) \ll_E  h_k  \frac{x (\log \log x)^2}{(\log x)^2} + \frac{1}{\sqrt{h_k}}\cdot  \frac{(\log x)^{3/2}}{(\log\log x)^3}\cdot x \exp\bigg( - \frac{c'}{2\sqrt{C}} \log \log x \bigg)(\log x)^2.
\]
By taking our constant $C>0$ sufficiently small, we find that $\pi_{\calC'}(x,L^\calU/L^\Bb) \ll_E h_k \cdot \frac{x (\log \log x)^2}{(\log x)^2}$.  By Lemmas~\ref{L:good approximation} and \ref{L:specific discriminant bounds}, we find that  $\tilde\pi_{\calC'}(x,L^\calU/L^\Bb) \ll_E h_k \cdot {x (\log \log x)^2}/{(\log x)^2}$.    Therefore,
\[
P_{E,k}(x) \ll_E \frac{x (\log \log x)^2}{(\log x)^2}
\]
by Lemma~\ref{L:third bound k}.

Finally, we now need to know that such a prime $\ell$ exists; at least if $x$ is sufficiently large.  By the Chebotarev density theorem and Theorem~\ref{T:Serre}, there is a constant $\gamma \geq 1$, depending on $E$ and $k$, such that if $y\geq\gamma$, then there is a prime $\ell \in [y,2y]$ for which $\ell$ splits in $k$ and $\rho_{E,\ell}$ is surjective.  So for $x$ sufficiently large, we will have $y\geq \gamma$ and the desired prime $\ell$ exists.  (One could make this explicit by using an effective version of the Chebotarev density theorem.)  This completes the proof of Theorem~\ref{T:main k}(\ref{T:main k ii}).

\section{Proof of Corollary~\ref{C:D theorem}} \label{S:D}

Let $\calD_E(x)$ be the set of imaginary quadratic extensions $k$ of $\QQ$ for which there exists a prime $p\leq x$ with $\QQ(\pi_p) \cong k$; note that $|\calD_E(x)|=D_E(x)$.  We start with the identity
\[
\pi(x) = |\{p \leq x: p| N_E\}| + {\sum}_{k\in \calD_E(x)} P_{E,k}(x),
\]
Theorem \ref{T:main k}(\ref{T:main k i}) then implies that
\begin{align*} 
x/\log x \ll_E \sum_{k\in \calD_E(x)} P_{E,k}(x)  \ll_E &  \sum_{k\in \calD_E(x)}\Bigl( \frac{1}{h_k^{3/5}}\frac{x^{4/5}}{(\log x)^{3/5}} + x^{1/2}(\log x)^3 \Bigr)\\
 =& \sum_{k\in \calD_E(x)} \frac{1}{h_k^{3/5}} \,\cdot \, \frac{x^{4/5}}{(\log x)^{3/5}} + D_E(x) x^{1/2}(\log x)^3.
\end{align*}
Using GRH, one can show that, $h_k \gg d_k^{1/2}/\log{d_k}.$  By Lemma~\ref{L:easy 0}, we have $d_k\leq 4x$ for all $k\in \calD_E(x)$.  Using these bounds, we have:
\begin{align*} 
\sum_{k\in \calD_E(x)} \frac{1}{h_k^{3/5}} & \ll  \sum_{k\in\calD_E(x)} \frac{(\log d_k)^{3/5}}{d_k^{3/10}} \ll  \sum_{k\in \calD_E(x)} \frac{1}{d_k^{3/10}} (\log x)^{3/5}
\leq  \sum_{d=1}^{ D_E(x) } \frac{1}{d^{3/10}} (\log x)^{3/5}.
\end{align*}
Therefore, $\sum_{k\in \calD_E(x)} {h_k^{-3/5}}  \ll  D_E(x)^{7/10} (\log x)^{3/5}$.
Combining with our previous inequality, we have
\begin{align*} 
x/\log x  &\ll_E   D_E(x)^{7/10} x^{4/5} \, + \, D_E(x)x^{1/2}(\log x)^3.
\end{align*} 
Therefore, we have $x/\log x \ll_E D_E(x)^{7/10} x^{4/5}$ or $x/\log x \ll_E D_E(x)x^{1/2}(\log x)^3$.   Equivalently, we have $D_E(x) \gg_E x^{2/7}/(\log x)^{10/7}$ or $D_E(x) \gg_E x^{1/2}/(\log x)^4$.  We conclude that $D_E(x)\gg_E {x^{2/7}}/{(\log x)^{10/7}}$ since this is the weaker of the two possible bounds.

\begin{remark}
If we had instead used the bound $P_{E,k}(x) \ll_E {x^{4/5}}/{(\log x)^{3/5}}$, then we would have deduced that $D_E(x)\gg_E {x^{1/5}}/{(\log x)^{2/5}}$.  Thus the factor $h_k^{-3/5}$ occuring in our bound of $P_{E,k}(x)$ gives a significant improvement.
\end{remark}

\section{Proof of Theorem~\ref{T:smoothed}} \label{S:smoothed proof}

Fix a real number $x\geq 2$. For each class function $\varphi\colon G\to \CC$, we define
\[
\Theta_\varphi(x) := \sum_{\p \in \Sigma_K, m\geq 1}  \varphi(\Frob_\p^m) \log N(\p)  \cdot f(N(\p)^m/x).
\]
We first estimate $\Theta_\chi(x)$ for irreducible characters $\chi\colon G \to \CC$.

\begin{lemma} \label{L:key bounds}
For any irreducible character $\chi$ of $G$, we have
\[
\Theta_\chi(x) = \delta_\chi \cdot x \int_0^\infty f(t)\, dt +  O_f\Big(\chi(1)[K:\QQ] x^{1/2} \log M(L/K)\Big),
\]
where $\delta_\chi=1$ if $\chi=1$ and $\delta_\chi=0$ otherwise.  
\end{lemma}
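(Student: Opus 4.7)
The plan is to express $\Theta_\chi(x)$ as a Mellin-type contour integral against $-L'(s,\chi)/L(s,\chi)$, then shift the contour past the pole at $s=1$ and the critical line, extracting the main term from the residue at $s=1$ and bounding the sum over non-trivial zeros using GRH together with rapid decay of the Mellin transform of $f$.

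Let $F(s) := \int_0^\infty f(t)\,t^{s-1}\,dt$ be the Mellin transform of $f$. Because $f$ is smooth and compactly supported inside $(0,\infty)$, $F$ is entire and satisfies $|F(\sigma+it)| \ll_{f,A,N} (1+|t|)^{-N}$ on each vertical strip $|\sigma|\leq A$ for every $N$. Mellin inversion gives $f(y) = \frac{1}{2\pi i}\int_{(c)} F(s)\,y^{-s}\,ds$ for any $c$. For $\operatorname{Re}(s)>1$ the Euler product of $L(s,\chi)$ yields the Dirichlet series
\[
-\frac{L'(s,\chi)}{L(s,\chi)} = \sum_{\p \in \Sigma_K}\sum_{m\geq 1} \chi(\Frob_\p^m)\log N(\p) \cdot N(\p)^{-ms},
\]
where the inertia-averaged convention of \S\ref{SS:basic Cheb} for $\chi(\Frob_\p^m)$ at ramified $\p$ is precisely what matches the Euler factor. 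Substituting Mellin inversion into the definition of $\Theta_\chi(x)$ and interchanging sum and integral (valid for $c>1$) produces
\[
\Theta_\chi(x) = \frac{1}{2\pi i}\int_{(c)} F(s)\,x^{s}\cdot\Bigl(-\frac{L'(s,\chi)}{L(s,\chi)}\Bigr)\,ds.
\]

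Next, I shift the contour from $\operatorname{Re}(s)=c>1$ down to a line $\operatorname{Re}(s)=-\sigma_0$ for a fixed $\sigma_0>0$ chosen to avoid trivial zeros of $L(s,\chi)$. Under AHC, $L(s,\chi)$ is holomorphic on $\CC\setminus\{1\}$ with a simple pole at $s=1$ exactly when $\chi$ is trivial; under GRH for $\zeta_L$ (which implies GRH for every $L(s,\chi)$ via factorization of $\zeta_L$) each non-trivial zero $\rho$ of $L(s,\chi)$ satisfies $\operatorname{Re}(\rho)=1/2$. Cauchy's residue theorem gives
\[
\Theta_\chi(x) = \delta_\chi F(1)\,x \;-\; \sum_{\rho} F(\rho)\,x^{\rho} \;+\; R_\chi(x),
\]
where $\rho$ ranges over non-trivial zeros of $L(s,\chi)$ counted with multiplicity and $R_\chi(x)$ collects the integral on the shifted line along with residues from finitely many trivial zeros crossed. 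Since $F(1)=\int_0^\infty f(t)\,dt$, the first term is precisely the advertised main term.

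To control the error, GRH gives $|x^\rho|=x^{1/2}$, so it suffices to bound $\sum_\rho |F(\rho)|$. Combining the standard zero-count
\[
|\{\rho : L(\rho,\chi)=0,\ |\operatorname{Im}(\rho)|\leq T\}| \ll \chi(1)[K:\QQ](T+1)\log(M(L/K)(T+1))
\]
with the super-polynomial decay of $F$ yields, after summation by parts (with $N$ large enough to absorb the $T$-growth), the estimate $\sum_\rho |F(\rho)| \ll_f \chi(1)[K:\QQ]\log M(L/K)$. The shifted-line integral contributes $O_f(x^{-\sigma_0})$ once one invokes the functional equation to bound $-L'(s,\chi)/L(s,\chi)$ on $\operatorname{Re}(s)=-\sigma_0$, and the trivial zeros contribute $O(\chi(1)[K:\QQ])$. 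Assembling these pieces gives the claim. The principal technical obstacle is the zero-count itself, which requires controlling the Artin conductor of $\chi$ in terms of $M(L/K)$ via the conductor-discriminant formula; everything else then reduces to the rapid decay of $F$, and it is exactly this decay that makes the smoothed error $x^{1/2}\log M(L/K)$ cleaner than the sharp-cutoff analogue of Theorem~\ref{T:effective}(\ref{T:effective i}), which carries an extra factor of $\log x$.
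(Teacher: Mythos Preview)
Your argument is correct and is, at bottom, the same explicit-formula computation the paper carries out, but the packaging differs. Rather than shifting the contour yourself, the paper invokes the Weil-type explicit formula as stated in Iwaniec--Kowalski (Theorem~5.11 of \cite{MR2061214}), applied with test function $\varphi(t)=f(t/x)$. That formula presents $\Theta_\chi(x)=\sum_n \Lambda_\chi(n)\varphi(n)$ directly as the main term $\delta_\chi\,x\int_0^\infty f$, minus the zero sum $\sum_\rho \widehat\varphi(\rho)$, plus three auxiliary pieces: a ``dual'' sum $\sum_n \overline{\Lambda_\chi(n)}\psi(n)$ with $\psi(t)=t^{-1}\varphi(t^{-1})$, the conductor term $\varphi(1)\log A_\chi$, and a gamma-factor integral $\tfrac{1}{2\pi i}\int_{(1/2)}\bigl(\tfrac{\gamma_\chi'}{\gamma_\chi}(s)+\tfrac{\gamma_{\bar\chi}'}{\gamma_{\bar\chi}}(1-s)\bigr)\widehat\varphi(s)\,ds$. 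Each is then bounded with exactly the ingredients you identified: the conductor estimate $\log A_\chi \ll \chi(1)[K:\QQ]\log M(L/K)$, the zero-count $N(t,\chi)\ll \chi(1)[K:\QQ]\log(M(L/K)(|t|+2))$, and the rapid decay $\widehat\varphi(1/2+iy)\ll_f x^{1/2}/(|y|+1)^2$. Your shifted-line integral and trivial-zero residues play the role of the paper's dual sum, conductor term, and gamma integral combined. Your route is more self-contained; the paper's avoids having to justify the contour shift through the critical strip.

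One small imprecision worth fixing: the shifted-line integral is not $O_f(x^{-\sigma_0})$ uniformly in $\chi$ and $L/K$. The functional-equation expression for $-L'/L(s,\chi)$ on $\operatorname{Re}(s)=-\sigma_0$ carries the term $\log A_\chi$, so the correct bound is $O_f\bigl(\chi(1)[K:\QQ]\log M(L/K)\cdot x^{-\sigma_0}\bigr)$. Since $x\geq 2$ this is still absorbed in the stated error, so the conclusion is unaffected.
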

\begin{proof}
Let $L(s,\chi)$ be the Artin $L$-function arising from $\chi$; for background on Artin $L$-functions see \cite{MR0447187}.    By our AHC assumption, the series $L(s,\chi)$ extends to a function analytic everywhere except at $s=1$ when $\chi=1$.  We have $\ord_{s=1} L(s,\chi)=-\delta_\chi$.  

Define $A_\chi:=d_K^{\chi(1)}\cdot N(\calF_\chi)$, where $\calF_\chi \subseteq \OO_K$ is the Artin conductor corresponding to $\chi$.  We define the completed $L$-function $\Lambda(s,\chi) := A_\chi^{s/2} \gamma_\chi(s) L(s,\chi)$, where $\gamma_\chi(s)$ is a certain product of $\Gamma$-factors.   See \cite{MR0447187}*{p.12} for the precise definition of $\gamma_\chi$; we simply note that there are explicit positive integers $a$ and $b$ with $a+b = \chi(1)[K:\QQ]$ such that
\[
\gamma_\chi(s)=(\pi^{-s/2} \Gamma(\tfrac{s}{2}))^a \cdot (\pi^{-(s+1)/2} \Gamma(\tfrac{s+1}{2}))^b.
\] 
The \defi{functional equation} for $\Lambda(s,\chi)$ says that
\[
\Lambda(s,\chi) = W_\chi \cdot \Lambda(1-s,\bbar{\chi})
\]
for some $W_\chi \in \CC^\times$ with absolute value $1$.  The logarithmic derivative of the Artin $L$-series of $L(s,\chi)$ is
\[
\frac{L'}{L}(s,\chi) = -\sum_{\p \in \Sigma_K} \log N(\p) \sum_{m\geq 1} \chi(\Frob_\p^m) N(\p)^{-ms}. 
\]
In particular, $-\frac{L'}{L}(s,\chi) = \sum_{n\geq 1} \Lambda_\chi(n) n^{-s}$, where 
\[
\Lambda_\chi(n) := \log n  \sum_{\substack{\p\in\Sigma_K, m\geq1\\ N(\p)^m = n}}    \frac{1}{m} \chi(\Frob_\p^m). 
\]

Let $\varphi\colon (0,+\infty) \to \CC$ be a smooth function with compact support. The Mellin transform of $\varphi$ is
\[
\widehat{\varphi}(s) := \int_0^\infty \varphi(t) t^s\, \frac{dt}{t}.
\]
Define the function $\psi\colon (0,+\infty) \to \CC$ by $\psi(t):=t^{-1}\varphi(t^{-1})$.   The \defi{explicit formula}, as given by Iwaniec and Kowalski in Theorem~5.11 of \cite{MR2061214}, says that
\begin{align}  \label{E:explicit formula}
\sum_{n\geq 1} \Big( \Lambda_\chi(n) \varphi(n) + \bbar{\Lambda_\chi(n)} \psi(n) \Big) =&\,\, \varphi(1) \log A_\chi + \delta_\chi \int^\infty_0 \varphi(t) \, dt  \\
&+  \frac{1}{2\pi i} \int_{(1/2)} \Big(\frac{\gamma_\chi'}{\gamma_\chi}(s) + \frac{\gamma_{\bbar{\chi}}'}{\gamma_{\bbar{\chi}}}(1-s) \Big) \widehat{\varphi}(s)\, ds - \sum_\rho \widehat{\varphi}(\rho),  \notag
\end{align}
where the sum is over the zeros $\rho$ of $L(s,\chi)$, with multiplicity, for which $0\leq \operatorname{Re}(\rho) \leq 1$.   The explicit formula in \cite{MR2061214} is given for a general $L$-function that satisfies certain properties; they are all known to hold for Artin $L$-function except for the analytic continuation which holds by our ongoing AHC assumption.  \\

We now take $\varphi\colon (0,+\infty)\to \CC$ to be the function $\varphi(t)=f(t/x)$.  Observe that 
\[
\Theta_\chi(x) = \sum_{n\geq 1} \Lambda_\chi(n) \varphi(n)
\] 
and that $\delta_\chi \int^\infty_0 \varphi(t) \, dt = \delta_\chi x \int^\infty_0 f(t) \, dt$; it thus remains to bound the other terms occurring in (\ref{E:explicit formula}).

We first bound $\sum_{n\geq 1} \bbar{\Lambda_\chi(n)} \psi(n)$.   If $\Lambda_\chi(n)$ is non-zero, then $n$ is a prime power.   Since there are at most $[K:\QQ]$ primes $\p\in \Sigma_K$ dividing a fixed rational prime, we have $|\Lambda_\chi(n)| \leq \log n \cdot \chi(1) [K:\QQ]$.  There is a number $c>0$, depending only on $f$, such that $f(t)=0$ for $t\leq c$.  If $n \geq c^{-1}$, then $1/(xn) \leq c$ and hence $\psi(n)=n^{-1}f(1/(xn))=0$.  Therefore,
\[
\sum_{n\geq 1} \bbar{\Lambda_\chi(n)} \psi(n) \ll\sum_{n \leq c^{-1}} |\Lambda_\chi(n)| \, |\psi(n)|
\leq   \chi(1) [K:\QQ] \sum_{n\leq c^{-1}} \log n \cdot \sup_{t\in \RR}|f(t)| \ll_f \chi(1)[K:\QQ].
\]

We have $\log A_\chi \ll \chi(1) [K:\QQ] \log M(L/K)$ by Proposition~2.5 of \cite{MR935007}.  Therefore, $\varphi(1) \log A_\chi  \ll_f \chi(1) [K:\QQ] \log M(L/K)$.

For $y\in \RR$, we have
\[
\widehat{\varphi}(1/2+i y ) = \int_0^\infty f(t/x) t^{1/2+i y}\, \frac{dt}{t} = x^{1/2} \cdot x^{i y} \int^{\infty}_{-\infty} f(e^u) e^{u/2} e^{i\cdot u y} \, du,
\]
where we have made the substitution $t=xe^u$.   We have $\int^{\infty}_{-\infty} f(e^u) e^{u/2} e^{i\cdot u y} \, du \ll_f 1/(|y|+1)^2$ since $f(e^u)e^{u/2}$, and hence also its Fourier transform, is a Schwartz function.   Therefore, 
\[
\widehat{\varphi}(1/2+i y ) \ll_f  x^{1/2}/(|y|+1)^2.
\]
Using that $\frac{\Gamma}{\Gamma}'(1/2+i y) \ll \log(|y|+2)$ for all real $y$ (cf.~Lemma~6.1 of \cite{MR0447191}), we have 
\[
\tfrac{\gamma_\chi'}{\gamma_\chi}(1/2+iy) \ll \chi(1)[K:\QQ] x^{1/2}/(|y|+1)^2.
\]
Therefore,
\[
\int_{(1/2)} \Big(\tfrac{\gamma_\chi'}{\gamma_\chi}(s) + \tfrac{\gamma_{\bbar{\chi}}'}{\gamma_{\bbar{\chi}}}(1-s) \Big) \widehat{\varphi}(s)\, ds \ll_f   \chi(1)[K:\QQ] x^{1/2} \int^\infty_{-\infty} \tfrac{\log(|y|+2)}{(|y|+1)^2} \, dy \ll  \chi (1) [K:\QQ] x^{1/2}.
\]

We now bound the sum $\sum_\rho \widehat{\varphi}(\rho)$.  We have $\zeta_L(s) = \prod_{\chi} L(s,\chi)^{\chi(1)}$, where the product is over irreducible characters $\chi$ of $G$ and $\zeta_L$ is the Dedekind zeta function of $L$.    By assumption, AHC holds for $L/K$ and GRH holds for $L$, so we deduce that any zero $\rho$ of $L(s,\chi)$ with $0\leq \operatorname{Re}(\rho)\leq 1$ satisfies $\operatorname{Re}(\rho)=1/2$.   Therefore,
\[
\sum_\rho \widehat{\varphi}(\rho) \ll_f x^{1/2} \sum_{\rho=1/2+i y} 1/(|y|+1)^2.
\]
For each real number $t$, let $N(t,\chi)$ be the number of zeros $\rho=1/2+i y$ of $L(s,\chi)$, counted with multiplicity, such that $|t-y| \leq 1$.   From equation (3.5.5) and Proposition~2.5 of \cite{MR935007}, we have 
\[
N(t,\chi)  \ll \chi(1) [K:\QQ] \log M(L/K) + \chi(1)[K:\QQ] \log(|t|+2).
\]  
Therefore,
\begin{align*}
\sum_\rho \widehat{\varphi}(\rho) \ll_f x^{1/2} \sum_{n\in \ZZ} \frac{N(n,\chi)}{(|n|+1)^2} 
&\ll x^{1/2} \chi(1)[K:\QQ] \log M(L/K)  \sum_{n\in \ZZ} \frac{\log(|n|+2)}{(|n|+1)^2}\\
&\ll x^{1/2} \chi(1)[K:\QQ] \log M(L/K).
\end{align*}
Using the above bounds with (\ref{E:explicit formula}), we obtain the desired estimate for $\Theta_\chi(x) = \sum_{n\geq 1} \Lambda_\chi(n) \varphi(n)$.
\end{proof}

\begin{lemma} \label{L:alway Cheb}
Let $D$ be any subset of $G$ that is stable under conjugation.  Then
\[
\Big|\Theta_{\delta_D}(x) - \frac{|D|}{|G|} \Theta_1(x)\Big| \leq |D|^{1/2}\cdot \Big(\frac{1}{|G|} \sum_{\chi\neq 1} |\Theta_{\chi}(x)|^2\Big)^{1/2},
\]
where the sum is over the non-trivial irreducible characters $\chi$ of $G$.
\end{lemma}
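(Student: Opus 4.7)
The plan is to use standard character orthogonality together with Cauchy--Schwarz. The key observation is that the functional $\varphi \mapsto \Theta_\varphi(x)$ is $\CC$-linear in the class function $\varphi$: this is immediate from the definitions of $\Theta_\varphi(x)$ and of $\varphi(\Frob_\p^m)$ (which is a weighted average of values of $\varphi$).

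First, I would expand the class function $\delta_D$ in the orthonormal basis of irreducible characters of $G$, writing
\[
\delta_D = \sum_{\chi} a_\chi \chi, \qquad a_\chi = \langle \delta_D, \chi \rangle = \frac{1}{|G|} \sum_{g \in D} \overline{\chi(g)}.
\]
The trivial character gives $a_1 = |D|/|G|$. Then by linearity of $\Theta$ in its class-function argument,
\[
\Theta_{\delta_D}(x) - \frac{|D|}{|G|}\Theta_1(x) = \sum_{\chi \neq 1} a_\chi\, \Theta_\chi(x).
\]

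Next I would apply the Cauchy--Schwarz inequality to the right-hand side to obtain
\[
\Big|\Theta_{\delta_D}(x) - \frac{|D|}{|G|}\Theta_1(x)\Big| \;\leq\; \Big(\sum_{\chi \neq 1} |a_\chi|^2\Big)^{1/2} \Big(\sum_{\chi \neq 1} |\Theta_\chi(x)|^2\Big)^{1/2}.
\]
Finally, by Parseval's identity for the orthonormal basis of irreducible characters,
\[
\sum_\chi |a_\chi|^2 \;=\; \langle \delta_D, \delta_D \rangle \;=\; \frac{1}{|G|} \sum_{g \in G} \delta_D(g)^2 \;=\; \frac{|D|}{|G|},
\]
so $\sum_{\chi \neq 1} |a_\chi|^2 \leq |D|/|G|$. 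Substituting this bound and rewriting the remaining factor as $|D|^{1/2} \bigl(\tfrac{1}{|G|} \sum_{\chi\ne 1}|\Theta_\chi(x)|^2\bigr)^{1/2}$ yields the claim.

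There is no real obstacle here; the statement is essentially Parseval plus Cauchy--Schwarz once one recognizes that $\Theta$ is linear in the class-function slot. The only small point worth checking is that the definition of $\varphi(\Frob_\p^m)$ at ramified primes is genuinely linear in $\varphi$, which is visible from the averaging formula.
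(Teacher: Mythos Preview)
Your proof is correct. Both your argument and the paper's rest on the same ingredients---linearity of $\Theta$ in the class-function variable, character orthogonality, and Cauchy--Schwarz---but you organize them differently. The paper first decomposes $D$ into conjugacy classes, applies Cauchy--Schwarz over those classes (with weights $1/|C|$), and then proves an auxiliary identity
\[
\sum_C \frac{1}{|C|}\Big|\Theta_{\delta_C}(x)-\frac{|C|}{|G|}\Theta_1(x)\Big|^2 = \frac{1}{|G|}\sum_{\chi\neq 1}|\Theta_\chi(x)|^2
\]
via the character expansion of each $\delta_C$. You instead expand $\delta_D$ directly in the character basis and apply Cauchy--Schwarz there, so Parseval does all the work in one step. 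Your route is shorter and avoids the intermediate identity; the paper's route makes the Plancherel-type identity explicit, which is of some independent interest but not needed for the lemma as stated.
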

\begin{proof}
We have $\Theta_{\delta_D}(x) - \frac{|D|}{|G|}  \Theta_1(x) = \sum_{C\subseteq D} (\Theta_{\delta_C}(x) - \frac{|C|}{|G|} \Theta_1(x))$, where the sum is over the conjugacy classes $C$ of $G$.  Using the triangle inequality and the Cauchy-Schwartz inequality, we find that $|\Theta_{\delta_D}(x) - \frac{|D|}{|G|}  \Theta_1(x)|$ is less than or equal to
\[
\sum_{C\subseteq D} \Big|\Theta_{\delta_C}(x) - \frac{|C|}{|G|} \Theta_1(x) \Big| \leq \Big(\sum_{C\subseteq D} |C|\Big)^{1/2} \Big( \sum_C \frac{1}{|C|} \Big| \Theta_{\delta_C}(x) - \frac{|C|}{|G|} \Theta_{1}(x) \Big|^2 \Big)^{1/2}.
\]
Since $\sum_{C\subseteq D} |C| = |D|$, it suffices to prove that
\begin{equation} \label{E:conj sum 1}
\sum_C \frac{1}{|C|} \Big| \Theta_{\delta_C}(x) - \frac{|C|}{|G|} \Theta_{1}(x) \Big|^2
= \frac{1}{|G|} \sum_{\chi\neq 1} |\Theta_{\chi}(x)|^2,
\end{equation}
where the first sum is over the conjugacy classes $C$ of $G$ and the second sum is over the non-trivial irreducible characters of $G$.

For $C$ and $\chi$ as above, let $\chi(C)$ be the common value of $\chi(g)$ with $g\in C$.    We have $\delta_C = \tfrac{|C|}{|G|} \sum_{\chi} \bbar{\chi(C)}\cdot \chi$, so by linearity $\Theta_{\delta_C}(x) = \frac{|C|}{|G|} \sum_\chi \bbar{\chi(C)} \,\Theta_{\chi}(x)$.  Therefore,
\begin{align*}
\sum_{C} \frac{1}{|C|} \Big| \Theta_{\delta_C}(x) - \frac{|C|}{|G|} \Theta_{1}(x) \Big|^2 
& = \sum_C \frac{1}{|C|} \Big| \frac{|C|}{|G|} \sum_{\chi\neq 1} \bbar{\chi(C)} \Theta_{\chi}(x) \Big|^2 \\
& = \frac{1}{|G|} \sum_C  \frac{|C|}{|G|}   \sum_{\chi\neq 1, \chi'\neq 1} \chi(C) \bbar{\chi'(C)} \Theta_{\chi}(x) \bbar{\Theta_{\chi'}(x)}
\end{align*}
Since $ \sum_C \frac{|C|}{|G|} \chi(C) \bbar{\chi'(C)} = \frac{1}{|G|} \sum_{g\in G} \chi(g) \chi'(g)$ is equal to $1$ if $\chi=\chi'$ and $0$ otherwise, we have
\[
\sum_{C} \frac{1}{|C|} \Big| \Theta_{\delta_C}(x) - \frac{|C|}{|G|} \Theta_{1}(x) \Big|^2 
= \frac{1}{|G|} \sum_{\chi\neq 1} \Theta_{\chi}(x) \bbar{\Theta_{\chi}(x)}
= \frac{1}{|G|} \sum_{\chi\neq 1} |\Theta_{\chi}(x)|^2. 
\]
This proves (\ref{E:conj sum 1}).
\end{proof}

\begin{lemma} \label{L:almost there}
Let $C$ be any subset of $G$ stable under conjugation.   Then
\[
\Theta_{\delta_C}(x) = \frac{|C|}{|G|}\,x \int^\infty_0 f(t) \, dt + O_f\Big( |C|^{1/2} [K:\QQ] x^{1/2} \log M(L/K) \Big).
\]
\end{lemma}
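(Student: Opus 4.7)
The plan is to combine Lemma~\ref{L:key bounds} and Lemma~\ref{L:alway Cheb} directly, with $D=C$.

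First I would apply Lemma~\ref{L:alway Cheb} to write
\[
\Theta_{\delta_C}(x) = \frac{|C|}{|G|}\,\Theta_1(x) + O\!\left( |C|^{1/2}\left(\frac{1}{|G|}\sum_{\chi\neq 1} |\Theta_\chi(x)|^2\right)^{\!1/2}\right).
\]
The main term comes from the $\chi=1$ piece: by Lemma~\ref{L:key bounds} applied to $\chi=1$ (so $\chi(1)=1$ and $\delta_\chi=1$),
\[
\Theta_1(x) = x\int_0^\infty f(t)\,dt + O_f\big([K:\QQ]\, x^{1/2} \log M(L/K)\big).
\]
Multiplying through by $|C|/|G|$ and using the trivial bound $|C|/|G|\leq |C|^{1/2}$ (valid since $|C|\leq |G|$), this produces the claimed main term $\frac{|C|}{|G|}x\int_0^\infty f(t)\,dt$ together with an error absorbed into $O_f(|C|^{1/2}[K:\QQ]x^{1/2}\log M(L/K))$.

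For the variance term I would use Lemma~\ref{L:key bounds} in its general form: for each nontrivial irreducible character $\chi$,
\[
|\Theta_\chi(x)|^2 \ll_f \chi(1)^2\, [K:\QQ]^2\, x\, \log^2 M(L/K).
\]
Summing over all nontrivial $\chi$ and invoking the standard identity $\sum_\chi \chi(1)^2 = |G|$ from character theory gives
\[
\sum_{\chi\neq 1} |\Theta_\chi(x)|^2 \ll_f |G|\cdot [K:\QQ]^2\, x\,\log^2 M(L/K),
\]
so that
\[
\left(\frac{1}{|G|}\sum_{\chi\neq 1}|\Theta_\chi(x)|^2\right)^{\!1/2} \ll_f [K:\QQ]\, x^{1/2}\,\log M(L/K).
\]
Substituting this bound into the estimate from Lemma~\ref{L:alway Cheb} yields the desired error term, completing the proof.

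There is no genuine obstacle here; the two preceding lemmas have been carefully set up so that the result is immediate. The only point requiring attention is the book-keeping showing that the main-term error $\frac{|C|}{|G|}[K:\QQ]x^{1/2}\log M(L/K)$ is dominated by $|C|^{1/2}[K:\QQ]x^{1/2}\log M(L/K)$, which follows from $|C|\leq |G|$.
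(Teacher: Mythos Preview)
Your proof is correct and follows essentially the same route as the paper: apply Lemma~\ref{L:alway Cheb}, bound each $|\Theta_\chi(x)|$ for $\chi\neq 1$ using Lemma~\ref{L:key bounds}, use $\sum_\chi \chi(1)^2=|G|$, and then estimate $\Theta_1(x)$ via Lemma~\ref{L:key bounds} with $\chi=1$. The only cosmetic differences are that the paper disposes of the case $C=\emptyset$ separately and leaves the absorption of the main-term error implicit, whereas you make the inequality $|C|/|G|\leq |C|^{1/2}$ explicit.
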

\begin{proof}
The lemma is trivial if $C=\emptyset$, so assume that $C\neq \emptyset$.  By Lemma~\ref{L:alway Cheb} and Lemma~\ref{L:key bounds}, we have
\begin{align*}
\Big|\Theta_{\delta_C}(x) - \frac{|C|}{|G|} \Theta_1(x)\Big| 
&\leq |C|^{1/2}\cdot \Big(\frac{1}{|G|} \sum_{\chi\neq 1} \big(\chi(1)[K:\QQ] x^{1/2} \log M(L/K)\big)^2\Big)^{1/2}\\
& \ll_f |C|^{1/2} [K:\QQ] x^{1/2} \log M(L/K) \cdot \big(\frac{1}{|G|}{\sum}_{\chi\neq 1} \chi(1)^2 \big)^{1/2}.
\end{align*}
Since $\sum_\chi \chi(1)^2 = |G|$, we have 
\[
\Theta_{\delta_C}(x) = \frac{|C|}{|G|} \Theta_1(x) +O_f\Big( |C|^{1/2} [K:\QQ] x^{1/2} \log M(L/K) \Big).
\]
The lemma follows by using Lemma~\ref{L:key bounds} with $\chi=1$ to estimate $\Theta_1(x)$.
\end{proof}

There is a constant $c>0$, depending only on $f$, such that $f(t)=0$ for all $t\geq c$.    In particular, we have $f(N(\p)^m/x)= 0$ if $N(\p)^m \geq  c x$.     Let $S(x)$ be the sum in the statement of Theorem~\ref{T:smoothed}.    One can readily check that
\[
0 \leq \Theta_{\delta_C}(x) - S(x) \leq (\tilde\pi_C(cx,L/K)-\pi_C(cx,L/K))\cdot  \log(cx) \cdot \max_{t\in \RR} |f(t)|.
\]
By Lemma~\ref{L:good approximation}, we have $S(x)=\Theta_{\delta_C}(x) + O_f( [K:\QQ] x^{1/2} \log M(L/K))$.  Theorem~\ref{T:smoothed} now follows directly from Lemma~\ref{L:almost there}.

%\bibliographystyle{plain}
%\bibliography{/Users/zywina/Documents/papers/bib/master}

% \bib, bibdiv, biblist are defined by the amsrefs package.
\begin{bibdiv}
\begin{biblist}

\bib{MR2464027}{article}{
      author={Cojocaru, Alina~Carmen},
      author={David, Chantal},
       title={Frobenius fields for elliptic curves},
        date={2008},
        ISSN={0002-9327},
     journal={Amer. J. Math.},
      volume={130},
      number={6},
       pages={1535\ndash 1560},
      review={\MR{MR2464027 (2009k:11092)}},
}

\bib{MR2178556}{article}{
      author={Cojocaru, Alina~Carmen},
      author={Fouvry, Etienne},
      author={Murty, M.~Ram},
       title={The square sieve and the {L}ang-{T}rotter conjecture},
        date={2005},
        ISSN={0008-414X},
     journal={Canad. J. Math.},
      volume={57},
      number={6},
       pages={1155\ndash 1177},
  url={http://journals.cms.math.ca/ams/ams-redirect.php?Journal=CJM&Volume=57&FirstPage=1155},
      review={\MR{MR2178556 (2006e:11074)}},
}

\bib{MR1144318}{article}{
      author={Elkies, Noam~D.},
       title={Distribution of supersingular primes},
        date={1991},
        ISSN={0303-1179},
     journal={Ast\'erisque},
      number={198-200},
       pages={127\ndash 132 (1992)},
        note={Journ{\'e}es Arithm{\'e}tiques, 1989 (Luminy, 1989)},
      review={\MR{1144318 (93b:11070)}},
}

\bib{MR2061214}{book}{
      author={Iwaniec, Henryk},
      author={Kowalski, Emmanuel},
       title={Analytic number theory},
      series={American Mathematical Society Colloquium Publications},
   publisher={American Mathematical Society},
     address={Providence, RI},
        date={2004},
      volume={53},
        ISBN={0-8218-3633-1},
      review={\MR{MR2061214 (2005h:11005)}},
}

\bib{MR0447191}{incollection}{
      author={Lagarias, J.~C.},
      author={Odlyzko, A.~M.},
       title={Effective versions of the {C}hebotarev density theorem},
        date={1977},
   booktitle={Algebraic number fields: {$L$}-functions and {G}alois properties
  ({P}roc. {S}ympos., {U}niv. {D}urham, {D}urham, 1975)},
   publisher={Academic Press, London},
       pages={409\ndash 464},
      review={\MR{0447191 (56 \#5506)}},
}

\bib{MR0568299}{book}{
      author={Lang, Serge},
      author={Trotter, Hale},
       title={Frobenius distributions in {${\rm GL}_{2}$}-extensions},
      series={Lecture Notes in Mathematics, Vol. 504},
   publisher={Springer-Verlag},
     address={Berlin},
        date={1976},
        note={Distribution of Frobenius automorphisms in
  ${{\rm{G}}L}_{2}$-extensions of the rational numbers},
      review={\MR{MR0568299 (58 \#27900)}},
}

\bib{MR0447187}{incollection}{
      author={Martinet, J.},
       title={Character theory and {A}rtin {$L$}-functions},
        date={1977},
   booktitle={Algebraic number fields: {$L$}-functions and {G}alois properties
  ({P}roc. {S}ympos., {U}niv. {D}urham, {D}urham, 1975)},
   publisher={Academic Press, London},
       pages={1\ndash 87},
      review={\MR{0447187 (56 \#5502)}},
}

\bib{MR935007}{article}{
      author={Murty, M.~Ram},
      author={Murty, V.~Kumar},
      author={Saradha, N.},
       title={Modular forms and the {C}hebotarev density theorem},
        date={1988},
        ISSN={0002-9327},
     journal={Amer. J. Math.},
      volume={110},
      number={2},
       pages={253\ndash 281},
      review={\MR{935007 (89d:11036)}},
}

\bib{MR1694997}{incollection}{
      author={Murty, V.~Kumar},
       title={Modular forms and the {C}hebotarev density theorem. {II}},
        date={1997},
   booktitle={Analytic number theory ({K}yoto, 1996)},
      series={London Math. Soc. Lecture Note Ser.},
      volume={247},
   publisher={Cambridge Univ. Press, Cambridge},
       pages={287\ndash 308},
      review={\MR{1694997 (2000g:11094)}},
}

\bib{MR819838}{article}{
      author={Ribet, Kenneth~A.},
       title={On {$l$}-adic representations attached to modular forms. {II}},
        date={1985},
        ISSN={0017-0895},
     journal={Glasgow Math. J.},
      volume={27},
       pages={185\ndash 194},
      review={\MR{819838 (88a:11041)}},
}

\bib{Rouse-Thorner}{article}{
       author={Rouse, Jeremy},
       author={Thorner, Jesse},
       title={The explicit Sato-Tate conjecture and densities pertaining to Lehmer-type questions},
  	     journal={arXiv:1305.5283 [math.NT]},
        date={2013},
}

\bib{MR0387283}{article}{
      author={Serre, Jean-Pierre},
       title={Propri\'et\'es galoisiennes des points d'ordre fini des courbes
  elliptiques},
        date={1972},
        ISSN={0020-9910},
     journal={Invent. Math.},
      volume={15},
      number={4},
       pages={259\ndash 331},
      review={\MR{MR0387283 (52 \#8126)}},
}

\bib{MR0450380}{book}{
      author={Serre, Jean-Pierre},
       title={Linear representations of finite groups},
   publisher={Springer-Verlag},
     address={New York},
        date={1977},
        ISBN={0-387-90190-6},
        note={Translated from the second French edition by Leonard L. Scott,
  Graduate Texts in Mathematics, Vol. 42},
      review={\MR{MR0450380 (56 \#8675)}},
}

\bib{MR644559}{article}{
      author={Serre, Jean-Pierre},
       title={Quelques applications du th\'eor\`eme de densit\'e de
  {C}hebotarev},
        date={1981},
        ISSN={0073-8301},
     journal={Inst. Hautes \'Etudes Sci. Publ. Math.},
      number={54},
       pages={323\ndash 401},
      review={\MR{MR644559 (83k:12011)}},
}

\bib{MR1062334}{article}{
      author={Wan, Da~Qing},
       title={On the {L}ang-{T}rotter conjecture},
        date={1990},
        ISSN={0022-314X},
     journal={J. Number Theory},
      volume={35},
      number={3},
       pages={247\ndash 268},
      review={\MR{1062334 (91f:11079)}},
}

\end{biblist}
\end{bibdiv}

\end{document}